\newtheorem{theorem}[equation]{Theorem}
\newtheorem{lemma}[equation]{Lemma}
\newtheorem{proposition}[equation]{Proposition}
\newtheorem{corollary}[equation]{Corollary}
\newtheorem{conjecture}[equation]{Conjecture}
\newtheorem{definition}[equation]{Definition}
\theoremstyle{remark}
\newtheorem{remark}[equation]{Remark}
\numberwithin{equation}{subsection}
\newcommand{\FF}{\mathbb{F}}
\newcommand{\ZZ}{\mathbb{Z}}
\newcommand{\QQ}{\mathbb{Q}}
\newcommand{\TT}{\mathbb{T}}
\newcommand{\GG}{\mathbb{G}}
\newcommand{\CC}{\mathbb{C}}
\newcommand{\NN}{\mathbb{N}}
\newcommand{\bm}{\mathbf{m}}
\newcommand{\bx}{\mathbf{x}}
\newcommand{\bu}{\mathbf{u}}
\newcommand{\bv}{\mathbf{v}}
\newcommand{\bC}{\mathbf{C}}
\newcommand{\bz}{\mathbf{z}}
\newcommand{\cL}{\mathcal{L}}
\newcommand{\cF}{\mathcal{F}}
\DeclareMathAlphabet{\matheur}{U}{eur}{m}{n}
\newcommand{\fQ}{\mathfrak{Q}}
\newcommand{\fs}{\mathfrak{s}}
\DeclareMathOperator{\Ker}{Ker} \DeclareMathOperator{\GL}{GL}
\DeclareMathOperator{\Mat}{Mat}
 \DeclareMathOperator{\wt}{wt}
\DeclareMathOperator{\Ext}{Ext}  
\DeclareMathOperator{\Li}{Li}
\DeclareMathOperator{\BC}{BC}
\DeclareMathOperator{\den}{den}
\DeclareMathOperator{\Eu}{Euler}
\newcommand{\ok}{\overline{k}}
\newcommand{\tr}{\mathrm{tr}}
\newcommand{\power}[2]{{#1 [\![ #2 ]\!]}}
\newcommand{\laurent}[2]{{#1 (\!( #2 )\!)}}
\begin{document}
\title[Eulerian multizeta values in positive characteristic]{{\large{ An effective criterion for Eulerian multizeta values\\ in positive characteristic}}}

\author{Chieh-Yu Chang}
\address{Department of Mathematics, National Tsing Hua University, Hsinchu City 30042, Taiwan
  R.O.C.}

\email{cychang@math.nthu.edu.tw}

\author{Matthew A. Papanikolas}
\address{Department of Mathematics, Texas A\&M University, College Station, TX 77843-3368, U.S.A.}

\email{map@math.tamu.edu}

\author{Jing Yu}
\address{Department of Mathematics, National Taiwan University and TIMS, Taipei City 106, Taiwan
  R.O.C.}

\email{yu@math.ntu.edu.tw}

\thanks{The first author was partially supported by a Golden-Jade
  fellowship of the Kenda Foundation, NCTS, and MOST Grant
  102-2115-M-007-013-MY5.  The second author was partially supported
  by NSF Grant DMS-1200577. The third author was partially supported by MOST Grant 102-2119-M-002-002.}

\subjclass[2010]{Primary  11R58, 11J93; Secondary 11G09, 11M32, 11M38}

\date{September 28, 2015}

\begin{abstract}
Characteristic $p$ multizeta values were initially studied by Thakur, who defined them as analogues of classical multiple zeta values of Euler.  In the present paper we establish an effective criterion for Eulerian multizeta values, which characterizes when a multizeta value is a rational multiple of a power of the Carlitz period.  The resulting \lq\lq $t$-motivic\rq\rq algorithm can tell whether any given multizeta value is Eulerian or not. We also prove that if $\zeta_{A}(s_{1},\ldots,s_{r})$ is Eulerian, then $\zeta_{A}(s_{2},\ldots,s_{r})$ has to be Eulerian. This was  conjectured  by Lara Rodr\'{i}guez and Thakur for the zeta-like case from numerical data. Our methods apply equally well to values of Carlitz multiple polylogarithms at algebraic points and can also be extended to determine zeta-like multizeta values.
\end{abstract}

\keywords{Multizeta values, Eulerian, Carlitz tensor powers, Carlitz polylogarithms, Anderson-Thakur polynomials}

\maketitle

\section{Introduction}
In this paper we provide an effective criterion to determine when multizeta values in positive characteristic are Eulerian. Our study is motivated by the celebrated formula of Euler on special values of the Riemann zeta function at even positive integers: for $m\in \NN$,
\[
\zeta(2m)=\frac{-B_{2m}\left( 2\pi\sqrt{-1} \right)^{2m} }{2(2m)!},
\]
where $B_{2m}\in \QQ$ are Bernoulli numbers. In particular, we have $\zeta(2m) / \bigl( 2\pi\sqrt{-1} \bigr)^{2m}\in \QQ$ for $m\in \NN$.  For an integer $n>1$, Euler's formula implies (trivially, since $\zeta(n)$ is real) that $\zeta(n)/(2\pi \sqrt{-1})^{n}$ is rational if and only if $n$ is even.

Multiple zeta values (henceforth abbreviated MZV's), initially studied by Euler as generalizations of special zeta values, are defined by the reciprocal power sums
\[
\zeta(s_{1},\cdots,s_{r})=\sum_{n_{1}>\cdots>n_{r}\geq
  1} \frac{1}{n_{1}^{s_{1}}\cdots n_{r}^{s_{r}} },
\]
where $s_{1},\ldots,s_{r}$ are positive integers with $s_{1}\geq 2$. Here $r$ is called the depth and $w:=\sum_{i=1}^{r}s_{i}$ is called the weight of the presentation $\zeta(s_{1},\ldots,s_{r})$.  We call $\zeta(s_{1},\ldots,s_{r})$ \emph{Eulerian} if the ratio $\zeta (s_{1},\ldots,s_{r}) / \bigl( 2\pi \sqrt{-1}
\bigr)^{w}$ is rational (see \cite{Thakur04}).  It is a natural question to ask if there is a criterion for determining which MZV's of depths at least $2$ are Eulerian.

Let $A$ be the polynomial ring in the variable $\theta$ over a finite field $\FF_{q}$ with quotient field~$k$. Let $A_{+}$ be the set of monic polynomials in $A$ and consider the series, for $n\in \NN$,
\[
\zeta_{A}(n):=\sum_{a\in A_{+}} \frac{1}{a^{n}}\in
\laurent{\FF_{q}}{\frac{1}{\theta}}.
\]
These values, called Carlitz zeta values, are analogues of classical special zeta values.  We note that in this non-archimedean situation the series $\zeta_{A}(1)$ does converge in $\laurent{\FF_{q}}{\frac{1}{\theta}}$. Let $\bC$ be the Carlitz module and $\tilde{\pi}$ be a fundamental period of $\bC$.  Recall that in the function field setting $\bC$ plays the role of the multiplicative group $\GG_{m}$ and $\tilde{\pi}$ plays the role of $2\pi \sqrt{-1}$. We denote by $\exp_{\bC}(z)=\sum_{i\geq 0} z^{q^{i}}/D_{i}$ the Carlitz exponential function, and by $\Gamma_{n+1}\in A$ (for non-negative integers $n$) the Carlitz factorials (see \S\ref{sec:AT polynomials} for definitions).

In \cite{Carlitz}, Carlitz derived an analogue of Euler's formula. More precisely, we write
\[
\frac{z}{\exp_{\bC}(z)}=\sum_{n\geq 0} \frac{\BC(n)}{\Gamma_{n+1}}z^{n},
\]
where $\BC(n)\in k$ are called Bernoulli-Carlitz numbers (see \cite{Goss96}). Carlitz established the formula
\begin{equation}\label{E:CarlitzFormula}
\zeta_{A}(n)=\frac{\BC(n)}{\Gamma_{n+1}}\tilde{\pi}^{n}
\end{equation}
if $n\in \NN$ is \emph{even} (i.e., $(q-1)| n$).  We note that $\tilde{\pi}^{n}\in \laurent{\FF_{q}}{\frac{1}{\theta}}$ if and only if $n$ is \emph{even}, and so Carlitz's result implies that $\zeta_{A}(n)/\tilde{\pi}^{n}\in k$ if and only if $n$ is \emph{even}.

In \cite{AT90}, Anderson and Thakur related $\zeta_{A}(n)$ to the last coordinate of the logarithm of $\bC^{\otimes n}$ (the $n$-th tensor power of the Carlitz module viewed as a Carlitz-Tate $t$-motive) at an explicitly constructed integral point $Z_{n}$ (see \S\ref{sec:AT special points}). As a consequence, one has that the rationality of  $\zeta_{A}(n)/\tilde{\pi}^{n}$ is equivalent to $Z_{n}$ being $\FF_{q}[t]$-torsion. In this case, it is clearly described when $Z_{n}$ is $\FF_{q}[t]$-torsion, and more precisely we have that $Z_{n}$ is an $\FF_{q}[t]$-torsion point if and only if $n$ is \emph{even} (see \cite[Prop.~1.11.2, Cor.~3.8.4]{AT90} and \cite[Thm.~3.1]{Yu91}). On the other hand, Yu showed that the transcendence of $\zeta_{A}(n)/\tilde{\pi}^{n}$ is equivalent to $Z_{n}$ being non-torsion (see~\cite[Cor.~2.6]{Yu91}), whence deriving that  $\zeta_{A}(n)/\tilde{\pi}^{n}$ is algebraic over $k$ if and only if  $\zeta_{A}(n)/\tilde{\pi}^{n}$ is in $k$.

For $\fs=(s_{1},\dots, s_{r})\in \NN^{r}$, characteristic $p$ multizeta values $\zeta_{A}(\fs)$, defined by Thakur~\cite{Thakur04}, are generalizations of Carlitz zeta values.  We set
\[
  \zeta_{A}(\fs) := \sum \frac{1}{a_1^{s_1} \cdots
  a_r^{s_r}} \in \laurent{\FF_{q}}{\frac{1}{\theta}},
\]
where the sum is taken over $r$-tuples of monic polynomials $a_1, \dots, a_r$ with $\deg a_1 > \cdots > \deg a_r$, $r$ is called the depth and $w:=s_1+\cdots+s_r$ is the weight of the presentation $\zeta_{A}(\fs)$. These values are known to be non-vanishing by Thakur~\cite[Thm.~4]{T09a}. As in the classical case, Thakur called $\zeta_{A}(\fs)$ \emph{Eulerian} if the ratio $\zeta_{A}\left(\fs\right)/\tilde{\pi}^{w}$ is in $k$.
We mention that one encounters here the Eulerian multizeta values such as $\zeta_{A}(q-1, (q-1)^2)$, or $\zeta_{A}(q-1, (q-1)q, \ldots, (q-1)q^{r-1})$  (see~\cite{Thakur09b, LRT13, Ch14}), as compared with the classical Eulerian values $\zeta(2m, 2m)$, $\zeta(2, 2, \cdots, 2)$. In contrast to the classical story, we already know that these ratios $\zeta_{A}(\fs)/{\tilde{\pi}}^w$  are either rational or transcendental over $k$.  Indeed, by \cite[Cor.~2.3.3]{C14} we have that either $\zeta_{A}(\fs)/{\tilde{\pi}}^w$ is in $k$ or $\zeta_{A}(\fs)$ and $\tilde{\pi}$ are algebraically independent over $k$, generalizing the depth one results of \cite{Yu97, CY07}. However the ``irrationality'' remains a subtle question, i.e. verifying that a given specific \emph{even} weight multizeta
 value of depth $r>1$ is not Eulerian.

The main result of the present paper (Theorem~\ref{T:EffectiveCriterion}) is to give an effective criterion for Eulerian multizeta values of arbitrary depth. Inspired by Anderson-Thakur~\cite{AT09}, for any $r$-tuple $\fs=(s_{1},\ldots,s_{r})\in \NN^{r}$ we first explicitly construct an abelian $t$-module $E':=E_{\fs}'$ defined over $A$, which is a higher dimensional analogue of a Drinfeld module introduced by Anderson~\cite{A86}, and an integral point $\bv_{\fs} \in E'(A)$  such that $\zeta_{A}(\fs)$ is Eulerian if and only if $\bv_{\fs}$ is an  $\FF_{q}[t]$-torsion point in $E'(A)$. Furthermore, whenever $\zeta_A(\fs)$ is Eulerian we find an
explicit polynomial $a_{\fs}\in \FF_{q}[t]$ that annihilates the integral point $\bv_{\fs}$. This allows us to establisht an algorithm for determining when a given MZV is Eulerian or non-Eulerian.    When $r=1$, for each $s\in \NN$ the special point $\bv_{s}$ is the same as the special point $Z_{s}$ introduced by Anderson-Thakur previously.

In the classical case, Brown~\cite[Thm.~3.3]{B12b} gave a sufficient condition for Eulerian MZV's in terms of motivic multiple zeta values, which are functions defined on the motivic period torsor for the motivic Galois group of the mixed Tate motives over~$\ZZ$, and whose images under the period map are the multiple zeta values in question. Given any $\zeta(s_1,\ldots,s_{r})$ with even weight $N$, if the corresponding motivic multiple zeta value $\zeta^{m}(s_{1},\ldots,s_{r})$ is trivial under the operator $D_{<N}$ given in \cite[(3.2)]{B12b}, then Brown proves that $\zeta(s_{1},\ldots,s_{r})$ is Eulerian. We note that Brown's condition is expected to be necessary for Eulerian MZV's but it is still a conjecture in the classical transcendence theory. We further mention that there is a way in principle to check whether the action of $D_{<N}$ on $\zeta^{m}(s_{1},\ldots,s_{r})$ is vanishing by applying \cite[(3.4)]{B12b}, but  it is not completely effective. We thank Brown for correspondence regarding this effectivity issue, related details can be located in \cite{B12a}.

Even in the classical case to date there is no conjecture that describes Eulerian MZV's precisely in terms of $s_{1},\ldots,s_{r}$. Having our algorithm it seems still difficult to tell  when the integral point $\bv_{\fs}$ is an  $\FF_{q}[t]$-torsion point in $E'(A)$ directly in terms of $s_{1},\ldots,s_{r}$ alone.  However an implementation of the algorithm in this paper does reveal a description of Eulerian MZV's inductively through the tuple $(s_{1},\ldots,s_{r})$  which will be discussed in \S\ref{sec:algorithm}. In particular a notable  consequence of the main result is the fact  that if $\zeta_{A}(s_{1},\ldots,s_{r})$ is Eulerian, then the $r-1$ MZV's, $\zeta_{A}(s_{2},\ldots,s_{r}),\ldots,\zeta_{A}(s_{r})$ are simultaneously Eulerian (see Corollary~\ref{C:SimuEulerian}). For the classical MZV's, Brown's theorem on a sufficient condition for Eulerian MZV's implies that $\zeta(3,1,\ldots,3,1)$ is Eulerian (see~\cite[Rem.~4.8]{B12a}).  However, $\zeta(1)$ does not converge and so a naive analogue of the truncation result for the classical Eulerian MZV's does not make  sense. It would be interesting to ask whether some sort analogue of the characteristic $p$ truncation  is nevertheless valid for the classical Eulerian MZV's without $1$ occurring in the coordinates.

The methods of constructing $t$-modules together with specific integral points which are developed in this paper also enable us to investigate similar phenomena for zeta-like multizeta values. As defined by Thakur, $\zeta_{A}(s_{1},\ldots,s_{r})$ is called \emph{zeta-like} if the ratio
\[
\zeta_{A}(s_{1},\ldots,s_{r})/\zeta_{A}(\sum_{i=1}^{r}s_{i})
\]
is in $k$ (equivalently it is algebraic over $k$ by \cite[Thm.~2.3.2]{C14}).  A criterion for zeta-like MZV's (see Theorem~\ref{T:CriterionZeta-like}) is given in terms of $\FF_{q}[t]$-linear relations for the corresponding two integral points on our $t$-modules. Here we are also able to deduce the fact that having $\zeta_{A}(s_{1},\ldots,s_{r})$ zeta-like implies that $\zeta_{A}(s_{2},\ldots,s_{r})$ must be Eulerian (see Corollary~\ref{C:zeta-like}). This property was originally conjectured by Lara  Rodr\'{i}guez and Thakur~\cite{LRT13}. We emphasize particularly that our criterion for zeta-like MZV's leads also to an effective algorithm.  This has been worked out and implemented by Kuan and Lin in~\cite{KL15}.

In \cite{C14}, the first author defined Carlitz multiple polylogarithms (abbreviated CMPL's) that are generalizations of Carlitz polylogarithms studied in~\cite{AT90}. Unlike the classical case, where there is a simple identity between multiple zeta values and multiple polylogarithms at $(1,\dots,1)$, the function field situation is more subtle.  Anderson and Thakur~\cite{AT90} showed that each Carlitz zeta value (itself a multizeta value of depth one) is a $k$-linear combination of Carlitz polylogarithms at integral points, and it is generalized in \cite{C14} that MZV's of arbitrary depth are $k$-linear combinations of Carlitz multiple polylogarithms at integral points. Following the terminology of Eulerian multizeta values, we call a nonzero value of a CMPL at an algebraic point \emph{Eulerian} if it is a $k$-multiple of $\tilde{\pi}$ raised to the power of its weight (see \S\ref{sec:CMPLs}). In Theorem~\ref{T:MainThmCMPL}, we give a criterion to determine which CMPL's at algebraic points are Eulerian.

The main idea of this work comes from the perspective of $t$-motives. To handle the $k$-linear relations among the MZV's which interest us, we manage to lift these relations in a $t$-motivic way to $\ok(t)$-linear relations among specific power series in $t$ (where $\ok$ is a fixed algebraic closure of $k$), which can be viewed as simplified analogue of the motivic MZV's in the classical theory. The key tool we use to accomplish the process is the  linear independence criterion of~\cite[Thm.~3.1.1]{ABP04} (the ``ABP-criterion'') that has been used successfully in the last decade for dealing with transcendence/algebraic independence questions in positive characteristic. The very fact that our motivic MZV's satisfy Frobenius (Galois) difference equations (by work of Anderson-Thakur~\cite{AT09}) also enables us to prove that the common denominator of the coefficients of the lifted relations is in $\FF_{q}[t]$. This denominator gives rise to linear relations for the corresponding algebraic points under the $\FF_{q}[t]$-action, and we exploit this phenomenon as much as we can in \S\S\ref{sec:preliminary}--\ref{sec: Proof of Main Thm}.

The paper is organized as follows. In \S\ref{sec:preliminary}, we first set up the necessary preliminaries and state the criterion, Theorem~\ref{T:ThmGeneral}, which equates the $\FF_{q}(\theta)$-linear dependence of values of certain special series at $t=\theta$ to $\FF_{q}[t]$-linear dependence of elements of certain $\Ext^1$-modules.  We apply \cite[Thm.~3.1.1]{ABP04} to give a proof of Theorem~\ref{T:ThmGeneral} in \S\ref{sec: Proof of Main Thm}. We then apply Theorem~\ref{T:ThmGeneral} in \S\ref{sec:applicationsMZV} to establish the criteria for Eulerian MZV's, CMPL's at algebraic points to be Eulerian and zeta-like MZV's.  Passing to $t$-modules in \S\ref{sec:t-modules} we reformulate  these  criteria. In \S\ref{sec:algorithm}, we further prove that our criterion for Eulerian MZV's yields an algorithm for determining whether any given MZV is Eulerian or non-Eulerian. A rule specifying all
Eulerian multizeta values  is drawn from the data collected using this algorithm.


\section{Preliminaries and statement of the main result}\label{sec:preliminary}
\subsection{Notation}

We adopt the following notation.
\begin{longtable}{p{0.5truein}@{\hspace{5pt}$=$\hspace{5pt}}p{5truein}}
$\FF_q$ & the finite field with $q$ elements, for $q$ a power of a
prime number $p$. \\
$\theta$, $t$ & independent variables. \\
$A$ & $\FF_q[\theta]$, the polynomial ring in the variable $\theta$ over $\FF_q$.
\\
$A_{+}$ & set of monic polynomials in A.
\\
$k$ & $\FF_q(\theta)$, the fraction field of $A$.\\
$k_\infty$ & $\laurent{\FF_q}{1/\theta}$, the completion of $k$ with
respect to the place at infinity.\\
$\overline{k_\infty}$ & a fixed algebraic closure of $k_\infty$.\\
$\ok$ & the algebraic closure of $k$ in $\overline{k_\infty}$.\\
$\CC_\infty$ & the completion of $\overline{k_\infty}$ with respect to
the canonical extension of $\infty$.\\
$|\cdot|_{\infty}$& a fixed absolute value for the completed field $\CC_{\infty}$ so that $|\theta|_{\infty}=q$.\\
$\deg$& function assigning to $x\in k_{\infty}$ its degree in $\theta$.\\
$\power{\CC_\infty}{t}$ & ring of formal power series in $t$ over $\CC_{\infty}$.\\
$\laurent{\CC_\infty}{t}$ & field of Laurent series in $t$ over $\CC_{\infty}$.
\end{longtable}

We consider the following characteristic $p$ multizeta values defined by Thakur~\cite{Thakur04}: for any $r$-tuple of positive integers $(s_{1},\dots,s_{r})\in \NN^{r}$,
\begin{equation}\label{E:Thakur MZV}
 \zeta_{A}(s_{1},\ldots,s_{r}):=\sum\frac{1}{a_{1}^{s_{1}}\cdots a_{r}^{s_{r}}}\in k_{\infty},
\end{equation}
where the sum is over $(a_{1},\ldots,a_{r})\in A_{+}^{r}$ with $\deg a_{1} > \cdots > \deg a_{r}$.  Thakur~\cite{T09a} showed that each multizeta value is non-vanishing.

\subsection{Frobenius modules}
We consider the following automorphism of the field of Laurent series over $\CC_{\infty}$, which is referred to as \emph{Frobenius twisting}:
\[
     \begin{array}{rcl}
      \laurent{\CC_\infty}{t}  & \rightarrow & \laurent{\CC_\infty}{t},\\
       f:=\sum_{i}a_{i}t^{i} & \mapsto & f^{(-1)}:=\sum_{i}{a_{i}}^{\frac{1}{q}}t^{i}. \\
     \end{array}
 \]
We extend Frobenius twisting to matrices with entries in $\laurent{\CC_\infty}{t}$ by twisting entry-wise.

We let $\bar{k}[t,\sigma]=\bar{k}[t][\sigma]$ be the non-commutative $\bar{k}[t]$-algebra generated by the new variable $\sigma$ subject to the relation
\[
\sigma f=f^{(-1)}\sigma, \quad f\in \bar{k}[t].
\]
We call a left $\ok[t,\sigma]$-module a \emph{Frobenius module} if it is free of finite rank over $\ok[t]$. Morphisms of Frobenius modules are left $\ok[t,\sigma]$-module homomorphisms. We denote by $\cF$ the category of Frobenius modules.

The trivial object of $\cF$ is denoted by $\mathbf{1}$, where the underlying space of ${\mathbf{1}}$ is $\ok[t]$ equipped with the $\sigma$-action given by
\[
\sigma (f):=f^{(-1)}, \quad f\in {\mathbf{1}}.
\]
Another example of an object in $\cF$ is the $n$-th tensor power of the Carlitz motive denoted by $\mathrm{C}^{\otimes n}$, where $n$ is a positive integer. The underlying space of $\mathrm{C}^{\otimes n}$ is $\bar{k}[t]$, on which the action of $\sigma$ is given by
\[
\sigma (f):=(t-\theta)^{n}f^{(-1)}, \quad f\in \mathrm{C}^{\otimes n}.
\]

In what follows, an object $M$ in $\cF$ is said to be defined by a matrix $\Phi\in \Mat_{r}(\ok[t])$ if $M$ is free of rank $r$ over $\ok[t]$ and the $\sigma$-action on a given $\ok[t]$-basis of $M$ is represented by the matrix $\Phi$.

For a Frobenius module $M$, we consider the tensor product $\ok(t)\otimes_{\ok[t]}M$ on which $\sigma$ acts diagonally. It follows that $\ok(t)\otimes_{\ok[t]}M$ becomes a left $\ok(t)[\sigma]$-module, where $\ok(t)[\sigma]$ is the twisted polynomial ring in $\sigma$ over $\ok(t)$ subject to the relation $\sigma h=h^{(-1)} \sigma$ for $h\in \ok(t)$. The following proposition is a slight generalization of \cite[Prop.~3.4.5]{P08}, but it is crucial while proving Theorem~\ref{T:ThmGeneral}.

\begin{proposition}\label{P:CommonDen}
For $i=1,2$, let $M_{i}$ be a Frobenius module of rank $r_i$ over $\ok[t]$ defined by a given matrix $\Phi_{i}\in \Mat_{r_{i}}(\ok[t])$ with respect to a fixed $\ok[t]$-basis  $\bm_{i}$ of $M_{i}$. Put $\mathcal{M}_{i}:=\ok(t)\otimes_{\ok[t]}M_{i}$ for $i=1,2$, and let $f:\mathcal{M}_{1}\rightarrow \mathcal{M}_{2}$ be a homomorphism of left $\ok(t)[\sigma]$-modules. With respect to the bases $1\otimes \bm_{1}$ and $1\otimes \bm_{2}$, $f$ is represented by a matrix $F\in \Mat_{r_{1}\times r_{2}}(\ok[t])$. Suppose that $\det \Phi_{i}=c_{i}(t-\theta)^{s_{i}}$ for some $c_{i}\in \ok^{\times}$ and $s_{i}\in \ZZ_{\geq 0}$ for $i=1,2$. Then the common denominator of the entries of $F$ is in $\FF_{q}[t]$.
 \end{proposition}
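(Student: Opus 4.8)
The plan is to argue that if a denominator $d \in \ok[t]$ of the entries of $F$ has a root $\lambda \in \ok$, then iterating the difference equation relating $F$, $\Phi_1$, $\Phi_2$ forces that root to propagate to infinitely many places, which is impossible unless $d$ divides a power of $(t-\theta)$ — and then the hypothesis on $\det \Phi_i$ kills even that possibility, so $d$ can be taken in $\ok^\times$ after clearing a constant, i.e.\ $d \in \FF_q[t]$. Concretely, since $f$ is $\sigma$-equivariant we have the functional equation $F^{(-1)} \Phi_1 = \Phi_2 F$ (or $\Phi_2 F^{(-1)} = F \Phi_1$, depending on the convention for how $\sigma$ acts on bases; I will fix whichever matches the paper's setup). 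Write $F = \frac{1}{d} G$ with $G \in \Mat_{r_1 \times r_2}(\ok[t])$ and $\gcd(d, \text{entries of } G) = 1$. Clearing denominators in the functional equation yields a relation of the form $d \cdot (\text{something}) = d^{(-1)} \cdot (\text{something else})$ among polynomial matrices.

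First I would reduce to understanding the divisors of $d$. The key point is the mismatch between $d$ and its twist $d^{(-1)}$: if $\lambda$ is a root of $d$, then $\lambda^q$ is a root of $d^{(-1)}$, so the twist shifts the root locus by the $q$-power Frobenius on $\ok$. Plugging the functional equation into itself $N$ times gives $F^{(-N)} = (\Phi_2^{(-N+1)} \cdots \Phi_2)^{-1} F (\Phi_1 \cdots \Phi_1^{(-N+1)})$ schematically; tracking denominators, any root $\lambda$ of $d$ with $\lambda \notin \{\theta^{q^j}\}$ would have to be cancelled by the entries of $\Phi_1$ or $\Phi_2$ at some twist. But $\det \Phi_i = c_i (t-\theta)^{s_i}$ means the only place where $\Phi_i$ fails to be invertible over $\ok[t]$-localized is $t = \theta$; hence the entries of $\Phi_i$ have no common root other than possibly $\theta$, and the adjugate formula for $\Phi_i^{-1}$ shows the only denominator introduced upon inversion is a power of $(t-\theta)$. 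Therefore a genuine pole of $F$ away from $t=\theta$ cannot be removed, contradicting that $F$ comes from a morphism defined over $\ok[t]$ after the tensoring — wait, that is backwards; rather, it forces $d$ to be supported only at $t=\theta$.

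So the remaining case is $d = (t-\theta)^m \cdot (\text{unit})$ with $m \geq 1$. Here I would examine the functional equation at $t = \theta$ directly. Since $\det \Phi_1 = c_1(t-\theta)^{s_1}$, the matrix $\Phi_1$ vanishes to controlled order at $t=\theta$; the equation $F^{(-1)} \Phi_1 = \Phi_2 F$ (or its transpose-side analogue), combined with $(t-\theta)^{(-1)} = t - \theta^{1/q} \neq t-\theta$, gives an incompatibility of vanishing orders at $t = \theta$ on the two sides unless $m = 0$. This is essentially the computation in \cite[Prop.~3.4.5]{P08}, and the generalization here is only that we allow $F$ to be rectangular and $\det \Phi_i$ to be a unit times a power of $(t-\theta)$ rather than exactly $(t-\theta)^{s_i}$; the argument is insensitive to the constant $c_i$ and to rectangularity since one works entry-by-entry (or column-by-column, using that $F$'s columns lie in $\mathcal{M}_2$). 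Hence $d \in \ok^\times$, and absorbing the constant we may take $d \in \FF_q[t]$ — indeed a posteriori the minimal denominator is forced to have coefficients in $\FF_q$ because applying Frobenius twisting to a minimal relation produces another minimal relation, so $d$ and $d^{(-1)}$ agree up to a constant, forcing $d \in \FF_q[t]$ after normalization.

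The main obstacle I anticipate is the bookkeeping at $t = \theta$: one must carefully compare the orders of vanishing on both sides of the twisted functional equation, using that $\det \Phi_i$ is exactly a unit times $(t-\theta)^{s_i}$ (so the cokernel of $\Phi_i$ is concentrated at $t = \theta$ with known length) while the denominator $d^{(-1)}$ has its zero at $t = \theta^{1/q}$, not at $t = \theta$. Reconciling "$d$ has a zero only at $\theta$" with "$d^{(-1)}$ appears on the other side with a zero only at $\theta^{1/q}$" is exactly what forces $d$ to be constant, and making that precise — rather than hand-waving it — is the crux. Everything else (the adjugate/Cramer estimates for $\Phi_i^{-1}$, the iteration, the Frobenius-stability of minimal denominators) is routine once the conventions are pinned down.
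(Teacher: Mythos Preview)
Your proposal has the right ingredients --- the functional equation $F^{(-1)}\Phi_2 = \Phi_1 F$, Frobenius twisting of denominators, and an iteration argument --- but the logical structure is off and leads you to a false intermediate claim. You assert that the denominator $d$ must be supported only at $t=\theta$, and then that $d\in\ok^\times$. Neither holds in general: take $M_1=M_2=\mathbf{1}$ (so $\Phi_1=\Phi_2=1$); then any $c\in\FF_q(t)$ defines a $\ok(t)[\sigma]$-module map $\mathcal{M}_1\to\mathcal{M}_2$, and $F=1/t$ is a perfectly good example with $\den(F)=t\in\FF_q[t]$, not a unit, and with its zero at $0$, not at $\theta$. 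Roots of $d$ lying in $\overline{\FF_q}$ have finite Frobenius orbits, so your ``infinite propagation'' argument cannot exclude them. The proposition only claims $\den(F)\in\FF_q[t]$, which is strictly weaker than what you try to prove.

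The paper's argument avoids this trap by never trying to locate all roots of $d$. From $F^{(-1)}=\Phi_1 F\Phi_2^{-1}$ and $\det\Phi_2=c_2(t-\theta)^{s_2}$ one gets $\den(F^{(-1)})\mid\den(F)\cdot(t-\theta)^{s_2}$; since $\den(F^{(-1)})=\den(F)^{(-1)}$ has the same $t$-degree as $\den(F)$, it suffices to show $(t-\theta)\nmid\den(F^{(-1)})$, for then $\den(F^{(-1)})=\den(F)$ and hence $\den(F)\in\FF_q[t]$. The iteration you sketch is used precisely and only for this coprimality step: assuming $(t-\theta)\mid\den(F^{(-1)})$ forces $(t-\theta^{q^n})\mid\den(F^{(-1)})$ for all $n\geq 0$, a contradiction. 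Your closing aside (``$d$ and $d^{(-1)}$ agree\ldots forcing $d\in\FF_q[t]$'') is in fact the correct endgame --- promote it from afterthought to the main thread, and drop the attempt to show $d$ is a unit.
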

\begin{proof} (cf. proof of \cite[Prop.~3.4.5]{P08}) Note that since $f$ is $\ok(t)[\sigma]$-linear, we have that
\[F^{(-1)}\Phi_{2} =\Phi_{1}F. \] For a matrix $B\in \Mat_{r\times s}(\ok(t))$, we denote by $\den(B)$ the monic least common multiple of the denominators of the entries of $B$. Since by hypothesis $\det \Phi_{2}=c_2 (t-\theta)^{s_{2}}$ for some $c_{2}\in \ok^{\times}$ and $s_{2}\geq 0$, we find that
\[ \den (F) (t-\theta)^{s_{2}}F^{(-1)}=\den (F) (t-\theta)^{s_{2}}\Phi_{1}F\Phi_{2}^{-1}\in \Mat_{r_{1}\times r_{2}}(\ok[t])   .\]
It follows that $\den(F^{(-1)})$ divides $\den (F) (t-\theta)^{s_{2}}$. As we have $\den(F^{(-1)})=\den(F)^{(-1)}$, it follows that $\deg_{t} \left(\den(F^{(-1)})\right)=\deg_{t}\left(\den(F)^{(-1)}\right) $. Therefore, it suffices to show that $\den(F^{(-1)})$ is relatively prime to $t-\theta$, since then $ \den(F^{(-1)})=\den(F),$ which implies $\den(F)\in \FF_{q}[t]$.

If $t-\theta$ divides  $\den(F^{(-1)})$, then $t-\theta^{q}$ divides $\den(F)$.  This forces $t-\theta^{q}$ to divide $\den\left(\Phi_{1}F\right)$, since otherwise $t-\theta^{q}$ would divide $\det\Phi_{1}=c_{1}(t-\theta)^{s_{1}}$.  Likewise, $t-\theta^{q}$ divides $\den\left(\Phi_{1}F\Phi_{2}^{-1} \right)=\den(F^{(-1)})$. Repeating the same argument above shows that $\den(F^{(-1)})$ is divisible by each of
\[t-\theta,t-\theta^{q},t-\theta^{q^{2}},\ldots  ,\]
whence we obtain a contradiction since $\den(F^{(-1)})\in\ok[t]$.
\end{proof}

\subsection{Frobenius modules connected to Carlitz multiple polylogarithms}

Given a polynomial $Q:=\sum_{i}a_{i}t^{i}\in \bar{k}[t]$, its Gauss norm is defined as  $\lVert Q\rVert_{\infty}:={\rm{max}}_{i}\left\{|a_{i}|_{\infty} \right\}$.  For a $r$-tuple $\fs = (s_{1},\ldots,s_{r})\in \NN^{r}$, we let $\fQ := (Q_{1},\ldots,Q_{r})\in \bar{k}[t]^{r}$ satisfy the hypothesis that as $0\leq i_{r}< \cdots< i_{1}\rightarrow \infty$,
\begin{equation}\label{E:HopythesisQ}
\left( \lVert Q_{1}\rVert_{\infty} \bigm/ \lvert \theta \rvert^{qs_{1}/(q-1)}_{\infty}\right)^{q^{i_{1}}}\cdots
\left( \lVert Q_{r}\rVert_{\infty} \bigm/ \lvert \theta \rvert^{qs_{r}/(q-1)}_{\infty}\right)^{q^{i_{r}}}
\rightarrow 0.
\end{equation}

Throughout this paper, we fix a fundamental period $\tilde{\pi}$ of the Carlitz module $\bC$ (see \cite{Goss96, Thakur04}). We put
\[
\Omega(t):=(-\theta)^{\frac{-q}{q-1}} \prod_{i=1}^{\infty} \biggl(
1-\frac{t}{\theta^{q^{i}}} \biggr)\in \power{\CC_{\infty}}{t},\]
where $(-\theta)^{\frac{1}{q-1}}$ is a suitable choice of  $(q-1)$-st root of $-\theta$ so that $\frac{1}{\Omega(\theta)}=\tilde{\pi}$ (cf.~\cite{ABP04, AT09}). We note that $\Omega$ satisfies the functional equation $\Omega^{(-1)}=(t-\theta)\Omega$.  Given $r$-tuples $\fs$ and $\fQ$ as above, we define the series
\begin{equation}\label{E:LsQ}
\mbox{\small
    $\begin{aligned}
     \cL_{\mathfrak{s},\mathfrak{Q}}(t) &:= \sum_{i_{1}>\cdots>i_{r}\geq 0} \bigl(\Omega^{s_{r}}Q_{r} \bigr)^{(i_{r})}\cdots \bigl(\Omega^{s_{1}}Q_{1} \bigr)^{(i_{1})} \\
     &= \Omega^{s_{1} + \cdots + s_{r}}\sum_{i_{1}>\cdots>i_{r}\geq 0} \frac{{Q_{r}}^{(i_{r})}(t)\cdots Q_{1}^{(i_{1})}(t) }{ \bigl( (t-\theta^{q})\ldots(t-\theta^{q^{i_{r}}})  \bigr)^{s_{r}}\ldots \bigl((t-\theta^{q})\ldots(t-\theta^{q^{i_{1}}})  \bigr)^{s_{1}}}.
    \end{aligned}$}
\end{equation}

We define $\mathcal{E}$ to be the ring consisting of formal power series $\sum_{n=0}^{\infty}a_{n}t^{n}\in \power{\ok}{t}$ such that
\[
\lim_{n\rightarrow \infty}\sqrt[n]{|a_{n}|_{\infty}}=0,\hbox{ }[k_{\infty}\left(a_{0},a_{1},a_{2},\ldots  \right):k_{\infty}]<\infty.
\]
Then any $f$ in $\mathcal{E}$ has an infinite radius of convergence with respect to $\lvert \cdot \rvert_{\infty}$, and functions in $\mathcal{E}$ are called entire functions. It is shown in \cite[Lem.~5.3.1]{C14} that the series $\cL_{\fs,\fQ}$ defined above is an entire function. We note that when $\fQ\in (\bar{k}^{\times})^{r}$ satisfies \eqref{E:HopythesisQ} then $\tilde{\pi}^{s_{1}+\cdots+s_{r}}\cL_{\fs,\fQ}(\theta)$ is the Carlitz multiple polylogarithm $\Li_{\fs}$ evaluated at the algebraic point $\mathfrak{Q}$. See \S\ref{sec:CMPLs} for additional details.

\begin{proposition}\label{P:formulaLsQ}
Let $\fs\in \NN^{r}$ and $\fQ\in \ok[t]^{r}$ satisfy the hypothesis \eqref{E:HopythesisQ}.  Then for any non-negative integer $n$, we have that
\[
\cL_{\fs,\fQ}\bigl(\theta^{q^{n}}\bigr)=\cL_{\fs,\fQ}(\theta)^{q^{n}}.
\]
\end{proposition}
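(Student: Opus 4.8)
The plan is to exploit the fact that $\cL_{\fs,\fQ}$ is an entire function, so it is legitimate to compute its value at $\theta^{q^n}$ by evaluating the defining series term by term, and then to recognize the resulting expression as the $q^n$-th power of the series defining $\cL_{\fs,\fQ}(\theta)$. First I would recall from \eqref{E:LsQ} that
\[
\cL_{\fs,\fQ}(t) = \sum_{i_{1}>\cdots>i_{r}\geq 0} \bigl(\Omega^{s_{r}}Q_{r}\bigr)^{(i_{r})}\cdots\bigl(\Omega^{s_{1}}Q_{1}\bigr)^{(i_{1})},
\]
where the superscript $(j)$ denotes the $j$-fold Frobenius twist $f\mapsto f^{(j)}$, i.e. raising coefficients to the $q^{j}$-th power while fixing $t$. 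Because $\cL_{\fs,\fQ}\in\mathcal{E}$ has infinite radius of convergence (by \cite[Lem.~5.3.1]{C14}), the series converges at $t=\theta^{q^{n}}$ and its value there is the sum of the values of the individual summands.

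The key computational observation is the interaction between Frobenius twisting and evaluation at $\theta^{q^{n}}$. For any $f=\sum_{m}a_{m}t^{m}\in\power{\ok}{t}$ (lying in $\mathcal{E}$, or at least convergent at the relevant points) one has
\[
f^{(j)}\bigl(\theta^{q^{n}}\bigr) = \sum_{m}a_{m}^{q^{j}}\,\theta^{q^{n}m}
= \Bigl(\sum_{m}a_{m}^{q^{j-n}}\,\theta^{m}\Bigr)^{q^{n}}
= \Bigl(f^{(j-n)}(\theta)\Bigr)^{q^{n}},
\]
valid as a formal identity whenever $j\geq n$, and more robustly: for each fixed summand indexed by $i_{1}>\cdots>i_{r}\geq 0$, applying the $q^{n}$-th power map is an $\FF_q$-algebra homomorphism, so I would instead argue directly that
\[
\Bigl(\bigl(\Omega^{s_{r}}Q_{r}\bigr)^{(i_{r})}\cdots\bigl(\Omega^{s_{1}}Q_{1}\bigr)^{(i_{1})}\Bigr)\Big|_{t=\theta^{q^{n}}}
= \Bigl(\bigl(\bigl(\Omega^{s_{r}}Q_{r}\bigr)^{(i_{r})}\cdots\bigl(\Omega^{s_{1}}Q_{1}\bigr)^{(i_{1})}\bigr)\big|_{t=\theta}\Bigr)^{q^{n}},
\]
simply because each factor $\bigl(\Omega^{s_{j}}Q_{j}\bigr)^{(i_{j})}$ is a power series whose coefficients lie in a field of characteristic $p$ and the substitution $t\mapsto \theta^{q^n}$ followed by nothing is the same as substituting $t\mapsto\theta$ into the untwisted coefficients and then raising to the $q^{n}$-th power — one checks this monomial by monomial as above. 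Summing over all indices $i_{1}>\cdots>i_{r}\geq 0$ and using continuity of the $q^{n}$-th power map on $\CC_\infty$ together with the convergence of the series at both $t=\theta$ and $t=\theta^{q^n}$, I would conclude
\[
\cL_{\fs,\fQ}\bigl(\theta^{q^{n}}\bigr)
= \sum_{i_{1}>\cdots>i_{r}\geq 0}\Bigl(\text{($i$-th summand at }t=\theta)\Bigr)^{q^{n}}
= \Bigl(\sum_{i_{1}>\cdots>i_{r}\geq 0}\text{($i$-th summand at }t=\theta)\Bigr)^{q^{n}}
= \cL_{\fs,\fQ}(\theta)^{q^{n}}.
\]

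The only genuine subtlety, and the step I expect to need the most care, is justifying the interchange of the $q^{n}$-th power map with the infinite sum; this is where entireness is used. Concretely, $\CC_\infty$ has characteristic $p$, so $x\mapsto x^{q^{n}}$ is additive, but it is only an endomorphism of the \emph{value group}, not obviously commuting with infinite sums unless one checks convergence. Since $\cL_{\fs,\fQ}\in\mathcal{E}$, the partial sums converge in $\CC_\infty$, the $q^n$-th power map is an isometry up to scaling of the absolute value (indeed $|x^{q^n}|_\infty = |x|_\infty^{q^n}$), hence continuous, so it does commute with the limit of partial sums; this legitimizes pulling the $q^{n}$-th power outside the sum. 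A clean alternative that sidesteps even this point is to note that both $t\mapsto\cL_{\fs,\fQ}(t^{q^n})$ and $t\mapsto\cL_{\fs,\fQ}(t)^{q^n}$ are entire functions of $t$ (the latter because $\mathcal{E}$ is closed under the $q$-power operation, which raises all coefficients to the $q$-th power and leaves radius of convergence infinite), and they agree as formal power series in $t$ by the monomial-by-monomial computation above; two entire functions with the same power series expansion coincide, so in particular they agree at $t=\theta$, giving the claim. I would present the formal power series identity as the main content and remark that convergence/entireness makes the substitution $t=\theta$ legitimate.
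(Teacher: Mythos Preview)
Your argument contains a genuine error at the termwise step. You claim that for each fixed multi-index $(i_1,\ldots,i_r)$,
\[
\Bigl(\bigl(\Omega^{s_{r}}Q_{r}\bigr)^{(i_{r})}\cdots\bigl(\Omega^{s_{1}}Q_{1}\bigr)^{(i_{1})}\Bigr)\Big|_{t=\theta^{q^{n}}}
= \Bigl(\bigl(\Omega^{s_{r}}Q_{r}\bigr)^{(i_{r})}\cdots\bigl(\Omega^{s_{1}}Q_{1}\bigr)^{(i_{1})}\Big|_{t=\theta}\Bigr)^{q^{n}},
\]
with the \emph{same} twist exponents on both sides. This is false. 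Your own monomial computation one line earlier shows that $f^{(j)}(\theta^{q^n})=(f^{(j-n)}(\theta))^{q^n}$, with a \emph{shift} $j\mapsto j-n$; dropping that shift is not legitimate. For a concrete counterexample take $r=1$, $s_1=1$, $Q_1=1$, and the summand with $i_1=0$, which is $\Omega$ itself: for $n\geq 1$ one has $\Omega(\theta^{q^n})=0$ (since $\theta^{q^n}$ is a zero of $\Omega$), whereas $\Omega(\theta)^{q^n}=\tilde{\pi}^{-q^n}\neq 0$. Your ``alternative'' via formal power series fails for the same reason: $\cL_{\fs,\fQ}(t^{q^n})$ and $\cL_{\fs,\fQ}(t)^{q^n}$ agree as power series only if every coefficient of $\cL_{\fs,\fQ}$ lies in $\FF_{q^n}$, which it does not.

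The correct argument (which is what \cite[Lem.~5.3.5]{C14} does, and which the paper invokes) uses precisely the shift you computed and then discarded. One observes that $\Omega^{(i)}$ vanishes at $t=\theta^{q^n}$ whenever $i<n$, so every summand with $i_r<n$ contributes $0$ at $t=\theta^{q^n}$; hence
\[
\cL_{\fs,\fQ}(\theta^{q^n})=\sum_{i_1>\cdots>i_r\geq n}\prod_{j}(\Omega^{s_j}Q_j)^{(i_j)}(\theta^{q^n}).
\]
Reindexing $i_j\mapsto i_j+n$ identifies this with $\cL_{\fs,\fQ}^{(n)}(\theta^{q^n})$, and then the general identity $f^{(n)}(\theta^{q^n})=f(\theta)^{q^n}$ (valid for any entire $f$, and which \emph{is} your monomial computation) gives $\cL_{\fs,\fQ}^{(n)}(\theta^{q^n})=\cL_{\fs,\fQ}(\theta)^{q^n}$. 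The zeros of $\Omega$ are thus essential input, not a side issue; without them the termwise comparison simply does not hold.
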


\begin{proof}
The proof is essentially the same as the proof of \cite[Lem.~5.3.5]{C14} by changing $u_{i}$ to $Q_{i}$. We omit the details.
\end{proof}

Let $r$ be a positive integer.  We fix two $r$-tuples $\fs\in \NN^{r}$ and $\fQ\in \ok[t]^{r}$ satisfying \eqref{E:HopythesisQ}. We define the matrix $\Phi=\Phi_{\fs,\fQ} \in \Mat_{r+1}(\ok[t])$,
\begin{equation}\label{E:Phi s}
\Phi :=
               \begin{pmatrix}
                (t-\theta)^{s_{1}+\cdots+s_{r}}  & 0 & 0 &\cdots  & 0 \\
                Q_{1}^{(-1)}(t-\theta)^{s_{1}+\cdots+s_{r}}  & (t-\theta)^{s_{2}+\cdots+s_{r}} & 0 & \cdots & 0 \\
                 0 &Q_{2}^{(-1)} (t-\theta)^{s_{2}+\cdots+s_{r}} &  \ddots&  &\vdots  \\
                 \vdots &  & \ddots & (t-\theta)^{s_{r}} & 0 \\
                 0 & \cdots & 0 & Q_{r}^{(-1)}(t-\theta)^{s_{r}} & 1 \\
               \end{pmatrix}.
\end{equation}
Define $\Phi' = \Phi'_{\fs,\fQ}$ to be the square matrix of size $r$ cut from the upper left square of $\Phi$:
\begin{equation}\label{E:Phi s'}
\Phi' :=
\begin{pmatrix}
                (t-\theta)^{s_{1}+\cdots+s_{r}}  &  &  &  \\
                Q_{1}^{(-1)}(t-\theta)^{s_{1}+\cdots+s_{r}}  & (t-\theta)^{s_{2}+\cdots+s_{r}}   &  &  \\
                  & \ddots & \ddots &  \\
                  &  & Q_{r-1}^{(-1)}(t-\theta)^{s_{r-1}+s_{r}} & (t-\theta)^{s_{r}}  \\
\end{pmatrix}.
\end{equation}
In what follows, to avoid heavy notation we omit the subscripts $\fs$, $\fQ$ when it is clear from the context.

For $1\leq \ell <j\leq r+1  $, we define the series
\begin{equation}\label{E:Ljl}
 \cL_{j\ell}(t):=\sum_{i_{\ell}>\cdots>i_{j-1}\geq 0} \left( \Omega^{s_{j-1}} Q_{j-1}\right)^{(i_{j-1})} \cdots\left( \Omega^{s_{\ell}}Q_{\ell}\right) ^{(i_{\ell})}\in \mathcal{E},
\end{equation}
which is the same series in \eqref{E:LsQ} associated to the two tuples $(s_{\ell},\ldots,s_{j-1})$ and $(Q_{\ell},\ldots,Q_{j-1})$. Define $\Psi \in \Mat_r(\mathcal{E}) \cap \GL_r(\TT)$ by
\begin{equation}\label{E:Psi s}
\Psi:=\begin{pmatrix}
\Omega^{s_{1}+\cdots+s_{r}}& & & & & \\
\Omega^{s_{2}+\cdots+s_{r}}\cL_{21}&\Omega^{s_{2}+\cdots+s_{r}} & & & & \\
\vdots& \Omega^{s_{3}+\cdots+s_{r}} \cL_{32} &\ddots & & & \\
\vdots&\vdots &\ddots &\ddots & & \\
\Omega^{s_{r}}\cL_{r1}&\Omega^{s_{r}}\cL_{r2} & &\ddots &\Omega^{s_{r}} & \\
\cL_{(r+1),1}&\cL_{(r+1),2} &\cdots &\cdots &\cL_{(r+1),r} &1 \\
\end{pmatrix},
\end{equation}
and note that we have $\Psi^{(-1)}=\Phi\Psi$ (cf. \cite[\S 2.5]{AT09}). Let $\Psi'$ be the square matrix of size $r$ cut from the upper left square of $\Psi$. So then $\Psi'^{(-1)}=\Phi'\Psi'$. Note that $\Phi$ defines an object in $\cF$ which is a $t$-motive in the sense of \cite{P08}.

\subsection{The $\Ext^{1}$-module}

We continue the notation from the previous paragraphs.  We denote by $M$ and $M'$ the objects in $\cF$ defined by the matrices $\Phi$ and $\Phi'$ respectively. Note that $M$ fits into the short exact sequence of Frobenius modules,
\[
0\rightarrow M' \rightarrow M \twoheadrightarrow {\mathbf{1}}\rightarrow 0,
\]
and so $M$ represents a class in $\Ext_{\cF}^{1}({\mathbf{1}},M')$.  The group $\Ext_{\cF}^1(\mathbf{1},M')$ has a natural $\FF_{q}[t]$-module structure coming from Baer sum and the pushout of morphisms of $M'$. More precisely, if $M_{1}$ and $M_{2}$ represent classes in $\Ext_{\cF}^{1}({\mathbf{1}},M')$ and are defined by the two matrices respectively
\[
\Phi_{1}:=
               \begin{pmatrix}
                 \Phi' & 0 \\
                 \bv_{1} & 1 \\
               \end{pmatrix}, \quad
\Phi_{2}:=
               \begin{pmatrix}
                 \Phi' & 0 \\
                 \bv_{2} & 1 \\
               \end{pmatrix},
\]
then the Baer sum $M_{1}+_{B}M_{2}$ is the object in $\cF$ defined by the matrix
\[
\begin{pmatrix}
     \Phi' & 0 \\
           \bv_{1}+\bv_{2} & 1
    \end{pmatrix}.
\]
Furthermore, for any $a\in \FF_{q}[t]$ multiplication by $a$ induces an endomorphism of $M'$, and so the pushout $a*M_{1}\in \cF$, which is defined by the matrix
\[
\begin{pmatrix}
     \Phi' & 0 \\
     a\bv_{1} & 1
     \end{pmatrix},
\]
thus inducing a left $\FF_{q}[t]$-module structure on $\Ext_{\cF}^{1}({\mathbf{1}},M')$.

\subsection{The main theorem}
We continue with the notation as above, but assume that $r\geq 2$. We let $w:=\sum_{i=1}^{r}s_{i}$ and let $Q\in \ok[t]$ satisfy $\|Q\|_{\infty}<|\theta|_{\infty}^{{wq}/(q-1)}$. We further assume that the series $\cL_{w,Q}(t)\in \mathcal{E}$ associated to $w$ and $Q$ is non-vanishing at $t=\theta$. We let $N\in \cF$ be the Frobenius module that is defined by the matrix
\begin{equation}\label{E:PhiN}
 \begin{pmatrix}
\Phi'& {\bf{0}}\\
\bu_{w} &1\\
\end{pmatrix}\in \Mat_{r+1}(\ok[t]),
\end{equation}
 where $\bu_{w}:=\left( Q^{(-1)}(t-\theta)^{w},0,\ldots,0\right)\in \Mat_{1\times r}(\ok[t])$. Then $N$ represents a class in $\Ext_{\cF}^{1}({\bf{1}},M')$.

The following result gives a criterion for the $k$-linear dependence of the specific values $\left\{ \cL_{\fs,\fQ}(\theta),\cL_{w,Q}(\theta),1 \right\} $, which is applied to the settings of Eulerian MZV's, Eulerian CMPL's at algebraic points, and zeta-like MZV's in \S\ref{sec:applicationsMZV}.  Its proof occupies the next section.

\begin{theorem}\label{T:ThmGeneral}
Let $r\geq 2$ be a positive integer.  We fix two $r$-tuples $\fs\in \NN^{r}$ and $\fQ\in \ok[t]^{r}$ satisfying \eqref{E:HopythesisQ}. Let $M$ and $M'$ be the objects in $\cF$ defined by the matrices $\Phi$ and $\Phi'$, as in \eqref{E:Phi s} and \eqref{E:Phi s'}.  For $1\leq \ell<j\leq r+1$, we let $\cL_{j\ell}$ be defined as in \eqref{E:Ljl} and suppose that it satisfies the non-vanishing hypothesis
\begin{equation}\label{E:nonzeroHyp}
\cL_{j \ell}(\theta)\neq 0.
\end{equation}
We let $w:=\sum_{i=1}^{r}s_{i}$ and let $Q\in \ok[t]$ satisfy $\|Q\|_{\infty}<|\theta|_{\infty}^{{wq}/(q-1)}$ and $\cL_{w,Q}(\theta)\neq 0$. Let $N\in\cF$ be defined by the matrix given in (\ref{E:PhiN}). Then the following hold.
\begin{itemize}
\item[(a)]
 The set  $\left\{ \cL_{\fs,\fQ}(\theta),\cL_{w,Q}(\theta),1 \right\} $ is linearly dependent over $k$ if and only if the classes of $M$ and $N$ are $\FF_{q}[t]$-linearly dependent in $\Ext_{\cF}^{1}({\bf{1}},M')$, i.e., there exists $a,b\in \FF_{q}[t]$ (not both zero) so that $a*M+_{B}b*N$ represents a trivial class in $\Ext_{\cF}^{1}({\bf{1}},M')$.
\item[(b)] If $\left\{ \cL_{\fs,\fQ}(\theta),\cL_{w,Q}(\theta),1 \right\} $ are linearly dependent over $k$, then each of $\cL_{r+1,2}(\theta),\ldots,\cL_{r+1,r}(\theta)$ is also in $k$.
\end{itemize}
\end{theorem}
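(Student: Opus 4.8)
\emph{Sketch of the intended proof.} The plan is to prove (a) and (b) together by moving between $k$-linear relations among the numbers $\cL_{\fs,\fQ}(\theta),\cL_{w,Q}(\theta),1$ and $\FF_q[t]$-linear relations among the classes of $M,N$ in $\Ext^1_{\cF}(\mathbf 1,M')$, using the linear independence criterion \cite[Thm.~3.1.1]{ABP04} and Proposition~\ref{P:CommonDen} as the two engines. Write $\bv$ for the bottom row of $\Phi$, so that $M$ is the class of $\left(\begin{smallmatrix}\Phi'&0\\ \bv&1\end{smallmatrix}\right)$ and $N$ the class of $\left(\begin{smallmatrix}\Phi'&0\\ \bu_w&1\end{smallmatrix}\right)$. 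First I would form the rank-$(r+2)$ Frobenius module $\widetilde M$ defined by the block lower triangular matrix $\widetilde\Phi=\left(\begin{smallmatrix}\Phi'&0&0\\ \bv&1&0\\ \bu_w&0&1\end{smallmatrix}\right)$; using $\Psi^{(-1)}=\Phi\Psi$, ${\Psi'}^{(-1)}=\Phi'\Psi'$, the identity $\Omega^{(-1)}=(t-\theta)\Omega$, and the defining series \eqref{E:LsQ} for $\cL_{w,Q}$, one checks that $\widetilde M$ is rigidly analytically trivialized by a matrix $\widetilde\Psi\in\Mat_{r+2}(\mathcal{E})\cap\GL_{r+2}(\TT)$ whose upper left $r\times r$ block is $\Psi'$, whose $(r+1)$-st row is $(\cL_{\fs,\fQ},\cL_{r+1,2},\dots,\cL_{r+1,r},1,0)$ (recall $\cL_{\fs,\fQ}=\cL_{r+1,1}$), and whose last row is $(\cL_{w,Q},0,\dots,0,1)$. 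Because $\widetilde\Phi$ is block lower triangular, $\det\widetilde\Phi=\det\Phi'$ is a power of $(t-\theta)$, which is precisely the hypothesis needed to apply \cite[Thm.~3.1.1]{ABP04} and Proposition~\ref{P:CommonDen} to $\widetilde M$ and to every Frobenius module built from $M'$, $M$, $N$ by pushout and Baer sum.

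For the forward implication of (a): given $\alpha\cL_{\fs,\fQ}(\theta)+\beta\cL_{w,Q}(\theta)+\gamma=0$ with $\alpha,\beta,\gamma\in k$ not all zero, I would present it as a vanishing $\ok$-linear form evaluated on the specialization at $t=\theta$ of a suitable column vector $\psi$ manufactured from $\widetilde\Psi$ (adjoining $\mathbf 1$ if needed so that the constant $1$ appears literally as a coordinate), the vector satisfying $\psi^{(-1)}=\widehat\Phi\psi$ with $\det\widehat\Phi$ again a power of $(t-\theta)$. Then \cite[Thm.~3.1.1]{ABP04} lifts the relation to an identity $P\psi=0$ in $\TT$ with $P$ over $\ok[t]$ and $P(\theta)$ the given form; unwinding this via $\widehat\Phi$ and the lower triangular shapes of $\Phi,\Phi'$ produces a nonzero morphism in $\cF$, a priori over $\ok(t)$ only, from $\ok(t)\otimes_{\ok[t]}(a*M+_Bb*N)$ to $\mathbf 1$ for some $a,b\in\ok[t]$ — i.e. a splitting of that $\ok(t)$-extension. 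Proposition~\ref{P:CommonDen}, applied exactly as in the proof of \cite[Prop.~3.4.5]{P08} with all determinants in sight equal to powers of $(t-\theta)$, then forces the denominator of the representing matrix into $\FF_q[t]$; clearing it lets us take $a,b\in\FF_q[t]$, so that $a*M+_Bb*N$ splits already over $\ok[t]$, i.e. is the trivial class in $\Ext^1_{\cF}(\mathbf 1,M')$. The non-vanishing hypotheses \eqref{E:nonzeroHyp} enter at this last point, to guarantee $(a,b)\neq(0,0)$: they prevent the descent from collapsing onto the coefficient of the trivial summand alone.

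For the converse implication of (a) and for (b): suppose $a*M+_Bb*N$ is trivial with $(a,b)\neq(0,0)$. Comparing the rigid analytic trivialization of the Baer sum with that of the split extension — the splitting isomorphism necessarily of the shape $\left(\begin{smallmatrix}I&0\\ *&1\end{smallmatrix}\right)$ — yields a functional identity
\[
\bigl(a\cL_{\fs,\fQ}+b\cL_{w,Q},\ a\cL_{r+1,2},\ \dots,\ a\cL_{r+1,r}\bigr)=\bc-\bg\Psi',
\]
with $\bc\in\Mat_{1\times r}(\FF_q[t])$ and $\bg\in\Mat_{1\times r}(\ok[t])$ (both forced polynomial because the two sides are entire). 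Specializing at $t=\theta$, where $\Omega(\theta)^{-1}=\tilde\pi$, the first coordinate — after using the lower triangular shape of $\Psi'$ and clearing the powers of $\Omega(\theta)$ — becomes a relation among $\cL_{\fs,\fQ}(\theta),\cL_{w,Q}(\theta),1$ and the auxiliary values $\cL_{r+1,j}(\theta)$; the non-vanishing hypotheses \eqref{E:nonzeroHyp} then force the coefficients of the auxiliary values to be zero, which both leaves the desired nontrivial $k$-linear dependence among $\cL_{\fs,\fQ}(\theta),\cL_{w,Q}(\theta),1$ and shows $\cL_{r+1,j}(\theta)\in k$ for $j=2,\dots,r$ — the latter being (b). Equivalently, for each fixed $j$ the coordinate-$j$ part of the identity asserts that the ``tail'' extension that carries $\cL_{r+1,j}(\theta)$, which has the same shape in depth $r-j+1$ with the hypotheses \eqref{E:nonzeroHyp} inherited, again represents an $\FF_q[t]$-torsion class, and one concludes (b) by downward induction on $j$ using the equivalence just proved in smaller depth.

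\emph{Main obstacle.} The technical heart is the descent in the forward implication: \cite[Thm.~3.1.1]{ABP04} only produces a lifted relation with coefficients in $\ok[t]$, and it is exactly the interplay between the Frobenius difference equation $\widetilde\Psi^{(-1)}=\widetilde\Phi\widetilde\Psi$ and the fact that $\det\widetilde\Phi$ is a power of $(t-\theta)$ — encapsulated in Proposition~\ref{P:CommonDen} — that pins the coefficients down to $\FF_q[t]$ and so makes the relation a genuine relation in the $\FF_q[t]$-module $\Ext^1_{\cF}(\mathbf 1,M')$. The rest is routine following \cite{P08}: the dictionary between trivial $\Ext$-classes and the shape of rigid analytic trivializations, and the Baer-sum/pushout bookkeeping; the hypotheses \eqref{E:nonzeroHyp} are the secondary but indispensable ingredient, used to keep the constant term from absorbing the relation in either direction and to feed the depth induction in (b).
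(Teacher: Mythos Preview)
Your overall architecture is right — ABP criterion plus Proposition~\ref{P:CommonDen} for $(\Rightarrow)$, comparison of rigid analytic trivializations for $(\Leftarrow)$ — but the key computational device is missing from both halves, and without it the argument does not go through.

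The missing idea is specialization at $t=\theta^{q^n}$ for $n\geq 1$, not at $t=\theta$. Since $\Omega$ has a simple zero at each $\theta^{q^n}$, every term carrying a positive power of $\Omega$ dies there; combined with Proposition~\ref{P:formulaLsQ}, which gives $\cL_{j\ell}(\theta^{q^n})=\cL_{j\ell}(\theta)^{q^n}$, one can read off clean relations. At $t=\theta$ one has $\Omega(\theta)=\tilde\pi^{-1}\neq 0$, so your ``clearing the powers of $\Omega(\theta)$'' in the first coordinate leaves a relation that still involves $\delta_1(\theta),\dots,\delta_r(\theta)$ multiplied by the transcendental values $\cL_{21}(\theta),\dots,\cL_{r1}(\theta)$ (note: these are the entries of the \emph{first column} of $\Psi'$, not $\cL_{r+1,j}$). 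The non-vanishing hypotheses do \emph{not} force those coefficients to vanish; there is no mechanism for that at $t=\theta$. In the paper the converse is obtained by specializing the identities \eqref{E:nudeltaL} at $\theta^{q^n}$: the $\delta$-terms disappear outright, and taking $q^n$-th roots gives $\nu_1(\theta)=a(\theta)\cL_{r+1,1}(\theta)+b(\theta)\cL_{w,Q}(\theta)$ and $\nu_j(\theta)=a(\theta)\cL_{r+1,j}(\theta)$ for $j\geq 2$, which is exactly $(\Leftarrow)$ and (b) at once.

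The same gap appears in your forward direction. After applying \cite[Thm.~3.1.1]{ABP04} to get $\mathbf f\widetilde\psi=0$, you write that ``unwinding'' produces a morphism over $\ok(t)$. In fact one must normalize $\tilde{\mathbf f}=\mathbf f/f_{r+2}$, compute the row vector $\tilde{\mathbf f}-\tilde{\mathbf f}^{(-1)}\widetilde\Phi=(B,B_1,\dots,B_{r+1},0)$, and prove that each $B_i$ vanishes identically. This is done by repeated specialization at $\theta^{q^n}$ and successive division by powers of $\Omega$: for instance, once $B=B_{r+1}=0$, dividing \eqref{E:equaBL} by $\Omega^{s_r}$ and evaluating at $\theta^{q^n}$ uses $\cL_{r1}(\theta)\neq 0$ to force $B_r=0$, and so on up the chain. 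So the non-vanishing hypotheses \eqref{E:nonzeroHyp} are used \emph{here}, for every pair $(j,\ell)$, to annihilate the $B_i$ one by one — not merely at the end ``to guarantee $(a,b)\neq(0,0)$''. Only after all $B_i=0$ does one have the genuine $\ok(t)[\sigma]$-module morphism to which Proposition~\ref{P:CommonDen} applies.

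Your proposed depth-induction for (b) can be made to work, but only once the base case is established — and that base case again needs the $\theta^{q^n}$-specialization trick.
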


\begin{remark}\label{Rem:Coeff:a}
Let notation and hypotheses be given as above. If $c_{1}\cL_{\fs,\fQ}(\theta)+c_{2}\cL_{\fs,Q}(\theta)+c_{3}=0$ with $c_{1},c_{2},c_{3}\in k$ and $c_{1}\neq 0$, then we can find  $a, b\in \FF_{q}[t]$ with $a\neq 0$ so that  $a*M+_{B}b*N$ represents a trivial class in $\Ext_{\cF}^{1}({\bf{1}},M')$ as  can be seen from the construction of $a$ in the proof of Theorem~\ref{T:ThmGeneral}~(a)~($\Rightarrow$) (note that in this situation $f_{r+1}(\theta)\neq 0$ in  \S\S~\ref{sub:Proof of Main Thm(a)}).
\end{remark}

\begin{remark}\label{Rem:k-dep}
Note that the $k$-linear dependence of  $\left\{ \cL_{\fs,\fQ}(\theta),\cL_{w,Q}(\theta),1 \right\}$ is equivalent to the $k$-linear dependence of $\left\{ \tilde{\pi}^{w}\cL_{\fs,\fQ}(\theta) ,\tilde{\pi}^{w}\cL_{w,Q}(\theta),\tilde{\pi}^{w} \right\}$. We mention that the values $\tilde{\pi}^{w}\cL_{\fs,\fQ}(\theta),\tilde{\pi}^{w}\cL_{w,Q}(\theta)$ satisfy the MZ property with weight $w$ in the sense of \cite[Def.~3.4.1]{C14}), and hence by \cite[Prop.~4.3.1]{C14} we have that $\left\{ \cL_{\fs,\fQ}(\theta),\cL_{w,Q}(\theta),1 \right\}$ are linearly dependent over $k$ if and only if they are linearly dependent over $\ok$, and that $\cL_{\fs,\fQ}(\theta)\in k$ if and only if $\cL_{\fs,\fQ}(\theta)\in \ok$.
\end{remark}

For the applications to Eulerian MZV's and Eulerian CMPL's at algebraic points we single out the following result, which is a special case of the theorem above.

\begin{corollary}\label{Cor:CorThmGeneral}
Let notation and assumptions be given as in Theorem~\ref{T:ThmGeneral}. Then we have that $\cL_{\fs,\fQ}(\theta)(=\cL_{r+1,1}(\theta))$ is in $k$ if and only if $M$ represents a torsion element in the $\FF_{q}[t]$-module $\Ext_{\cF}^{1}({\bf{1}},M')$.
\end{corollary}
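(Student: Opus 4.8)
The plan is to deduce Corollary~\ref{Cor:CorThmGeneral} from Theorem~\ref{T:ThmGeneral} by specializing the second series. The key observation is that the hypotheses of the theorem allow $Q$ to be \emph{any} polynomial in $\ok[t]$ with $\|Q\|_{\infty}<|\theta|_{\infty}^{wq/(q-1)}$ satisfying $\cL_{w,Q}(\theta)\neq 0$; in particular we are free to take $Q=1$ (which trivially satisfies the Gauss norm bound since $\|1\|_{\infty}=1<|\theta|_{\infty}^{wq/(q-1)}$), provided we can guarantee $\cL_{w,1}(\theta)\neq 0$. For $Q=1$ the series $\cL_{w,1}(t)$ is, up to the normalizing factor $\Omega^{w}$, precisely the series whose value at $t=\theta$ recovers the Carlitz zeta value $\zeta_{A}(w)$ (or more precisely a Carlitz polylogarithm at $1$), which is nonzero by the non-vanishing results of Thakur/Anderson-Thakur already cited in the excerpt. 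With this choice the Frobenius module $N$ of \eqref{E:PhiN} becomes the one attached to the weight-$w$ depth-one datum, namely essentially $\mathrm{C}^{\otimes w}$ pushed into $\Ext^{1}_{\cF}(\mathbf{1},M')$ via the map $M'\to\mathbf{1}\to\mathrm{C}^{\otimes w}$; and it is a standard fact (already implicit in the Anderson-Thakur picture recalled in the introduction) that the class of this $N$ is itself $\FF_{q}[t]$-torsion in $\Ext^{1}_{\cF}(\mathbf{1},M')$, indeed annihilated by a power of $(t-\theta)$ after the appropriate identification.

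Granting that $N$ represents a torsion class, the deduction is short. First I would apply Theorem~\ref{T:ThmGeneral}(a) with this specific $Q=1$. If $\cL_{\fs,\fQ}(\theta)\in k$, then $\{\cL_{\fs,\fQ}(\theta),\cL_{w,1}(\theta),1\}$ is trivially $k$-linearly dependent (a nontrivial relation already exists among $\cL_{\fs,\fQ}(\theta)$ and $1$), so part~(a) gives $a,b\in\FF_{q}[t]$, not both zero, with $a*M+_{B}b*N$ trivial in $\Ext^{1}_{\cF}(\mathbf{1},M')$. Here I must invoke Remark~\ref{Rem:Coeff:a}: since in this situation the linear relation has nonzero coefficient on $\cL_{\fs,\fQ}(\theta)$ (we may take $c_{1}=1$, $c_{2}=0$, $c_{3}=-\cL_{\fs,\fQ}(\theta)$), we may choose $a\neq 0$. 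Then in $\Ext^{1}_{\cF}(\mathbf{1},M')$ we have $a*M=-b*N$ (using that Baer sum is the group law and trivial class is the identity), and since $N$ is torsion, say $a'*N=0$ for some nonzero $a'\in\FF_{q}[t]$, we get $(a'a)*M=-b*(a'*N)= -a'*(b*N)=0$, with $a'a\neq 0$. Hence $M$ is torsion. Conversely, if $M$ is torsion, say $c*M=0$ for nonzero $c\in\FF_{q}[t]$, then $M$ and $N$ are certainly $\FF_{q}[t]$-linearly dependent (take $a=c$, $b=0$), so Theorem~\ref{T:ThmGeneral}(a) forces $\{\cL_{\fs,\fQ}(\theta),\cL_{w,1}(\theta),1\}$ to be $k$-linearly dependent. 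To conclude $\cL_{\fs,\fQ}(\theta)\in k$ from this dependence I would use the ``MZ property'' rigidity recalled in Remark~\ref{Rem:k-dep}: both $\tilde{\pi}^{w}\cL_{\fs,\fQ}(\theta)$ and $\tilde{\pi}^{w}\cL_{w,1}(\theta)$ have the MZ property of weight $w$, and a $k$-linear relation among these together with $\tilde{\pi}^{w}$ that does not already lie among $\{\cL_{w,1}(\theta),1\}$ must have nonzero $\cL_{\fs,\fQ}(\theta)$-coefficient; combined with the fact that $\cL_{w,1}(\theta)/\tilde{\pi}^{0}$-type values are themselves either in $k$ or transcendental, one extracts $\cL_{\fs,\fQ}(\theta)\in k$.

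I expect the main obstacle to be pinning down precisely that the class of $N$ (for $Q=1$) is torsion and that it does not secretly reintroduce a nontrivial relation obstructing the argument — i.e.\ bookkeeping around the two-variable dependence in Theorem~\ref{T:ThmGeneral} collapsing to the one-variable statement. One has to be careful that when $M$ is torsion the dependence produced by part~(a) is the ``expected'' one and not an accidental relation between $\cL_{w,1}(\theta)$ and $1$ alone that would say nothing about $\cL_{\fs,\fQ}(\theta)$; this is exactly where the weight-$w$ MZ-property machinery of \cite{C14} (Remark~\ref{Rem:k-dep}) does the real work, by showing the relevant period values cannot conspire. A cleaner route, and the one I would actually write, is to observe directly that in the proof of Theorem~\ref{T:ThmGeneral}(a) the only role of $N$ and $Q$ is to supply a second generator; the corollary is literally the degenerate case where one drops that generator, so one can either cite the relevant paragraph of the proof of Theorem~\ref{T:ThmGeneral}(a) with $c_{2}=0$ throughout, or equivalently note that $\cL_{\fs,\fQ}(\theta)\in k$ iff $\{\cL_{\fs,\fQ}(\theta),1\}$ is $k$-dependent iff (by the $c_{2}=0$ specialization of the proof) $M$ itself is torsion. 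I would present the short ``group-law'' argument above as the main text and relegate the degenerate-case remark to a parenthetical.
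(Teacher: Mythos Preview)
Your ``cleaner route'' in the final paragraph --- citing the degenerate case of the proof of Theorem~\ref{T:ThmGeneral}(a) with $c_2=0$ for ($\Rightarrow$) and $b=0$ for ($\Leftarrow$) --- is exactly the paper's proof (Case~(II) of \S\ref{sub:Proof of Main Thm(a)} for the forward direction, and the $b=0$ specialization of \eqref{E:nuzeta} for the converse). That argument is complete and should be the main text, not a parenthetical.

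The group-law argument you present as the main text has a genuine gap: the claim that $N$ (with $Q=1$) is torsion in $\Ext^1_{\cF}(\mathbf{1},M')$ is unjustified and in general false. Under the isomorphism $\mu$ of Theorem~\ref{T:Ext1}, the class of this $N$ is $(t-\theta)^w m_1=\sigma m_1\equiv m_1$, which under $\Delta$ lands in the image of $\mathbf{C}^{\otimes w}(\ok)\hookrightarrow E'(\ok)$ as the integral point $(0,\ldots,0,1)^{\tr}$. Whenever $(q-1)\nmid w$ the group $\mathbf{C}^{\otimes w}(k)_{\mathrm{tor}}$ is trivial by \cite[Prop.~1.11.2]{AT90}, so this point, and hence $N$, is non-torsion; your phrase ``annihilated by a power of $(t-\theta)$'' conflates $t-\theta\in\ok[t]$ with elements of $\FF_q[t]$ acting via $\rho$. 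Without $N$ torsion, $a*M=-b*N$ with $a\neq 0$ does not force $M$ torsion, and the forward implication fails.

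The backward direction via the theorem \emph{statement} alone is also incomplete. From $M$ torsion and $(a,b)=(c,0)$, Theorem~\ref{T:ThmGeneral}(a) yields only that $\{\cL_{\fs,\fQ}(\theta),\cL_{w,Q}(\theta),1\}$ is $k$-linearly dependent; this does not by itself give $\cL_{\fs,\fQ}(\theta)\in k$, and your appeal to the MZ property is too vague to close it. The paper instead reads the conclusion directly from the \emph{proof}: with $b=0$, the first line of \eqref{E:nuzeta} becomes $\nu_1(\theta)=a(\theta)\cL_{r+1,1}(\theta)$ with $\nu_1\in\FF_q[t]$ and $a\neq 0$, whence $\cL_{\fs,\fQ}(\theta)\in k$.
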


\begin{proof}
The proof of ($\Rightarrow$) is given in the case (II) of the proof of ($\Rightarrow$) of Theorem~\ref{T:ThmGeneral}(a). The proof of ($\Leftarrow$) follows from the proof of ($\Leftarrow$) of Theorem~\ref{T:ThmGeneral}(a) by putting $b=0$.
\end{proof}

\section{Proof of Theorem~\ref{T:ThmGeneral}}\label{sec: Proof of Main Thm}

\subsection{A remark}
For $a$, $b\in \FF_{q}[t]$, the Frobenius module $a*M+_{B}b*N$ is defined by the matrix
\begin{equation}\label{E:matrix-X}
  X:=\begin{pmatrix}
\Phi'& {\bf{0}}\\
\bu &1\\
\end{pmatrix}\in \Mat_{r+1}(\ok[t]),
 \end{equation} where $\bu:=\left(bQ^{(-1)}(t-\theta)^{w},0, \ldots,0,a Q_{r}^{(-1)}(t-\theta)^{s_{r}}   \right)\in \Mat_{1\times r}(\ok[t])$.  It follows that the Frobenius module $a*M+_{B}b*N$  represents a trivial class in $\Ext_{\cF}^{1}({\bf{1}},M')$ if and only if there exists $\delta_{1},\ldots,\delta_{r}\in \ok[t]$ so that
 \begin{equation}\label{E:deltaX}
\begin{pmatrix}
1& & & \\
& 1& & \\
& & \ddots& \\
\delta_{1} &\cdots &\delta_{r} & 1 \\
\end{pmatrix}^{(-1)}X=\begin{pmatrix}
\Phi'&\\
&1\\
\end{pmatrix} \begin{pmatrix}
1& & & \\
& 1& & \\
& & \ddots& \\
\delta_{1} &\cdots &\delta_{r} & 1 \\
\end{pmatrix},
\end{equation} which is equivalent to that
\begin{equation}\label{E:Eqdelta}
\begin{split}
  \delta_{1} &:= {\textstyle \delta_{1}^{(-1)}(t-\theta)^{w}+\delta_{2}^{(-1)}Q_{1}^{(-1)}(t-\theta)^{w}+bQ^{(-1)}(t-\theta)^{w}   ;} \\
   \delta_{2} &:= {\textstyle \delta_{2}^{(-1)}(t-\theta)^{s_{2}+\cdots+s_{r}}+\delta_{3}^{(-1)}Q_{2}^{(-1)}(t-\theta)^{s_{2}+\cdots+s_{r}}  ;} \\
    &\quad\vdots \\
\delta_{r-1} &:= {\textstyle \delta_{r-1}^{(-1)}(t-\theta)^{s_{r-1}+s_{r}}+\delta_{r}^{(-1)}Q_{r-1}^{(-1)}(t-\theta)^{s_{r-1}+s_{r}}  ;}\\
\delta_{r}&:={\textstyle \delta_{r}^{(-1)}(t-\theta)^{s_{r}}+aQ_{r}^{(-1)}(t-\theta)^{s_{r}}   .}
\end{split}
\end{equation}

\subsection{Proof of Theorem~\ref{T:ThmGeneral} (a)($\Rightarrow$)}\label{sub:Proof of Main Thm(a)}
Suppose that $\left\{ \cL_{\fs,\fQ}(\theta),\cL_{w,Q}(\theta),1 \right\}$ are linearly dependent over $k$. Our goal is to find $a,b\in \FF_{q}[t]$ (not both zero) and $\delta_{1},\ldots,\delta_{r}\in \ok[t]$ satisfying the equations~(\ref{E:Eqdelta}).

Define the matrix
\[
\widetilde{\Phi}:=\begin{pmatrix}
1& &\\
&\Phi & \\
&Q^{(-1)}(t-\theta)^{w},0,\ldots,0  & 1
\end{pmatrix}\in \Mat_{r+3}(\ok[t])
\]
and put
\[
\widetilde{\psi}:=\begin{pmatrix}
1\\
\Omega^{s_{1}+\cdots+s_{r}}\\
\Omega^{s_{2}+\cdots+s_{r}}\cL_{21}  \\
\vdots\\
 \cL_{r+1,1}  \\
\cL_{w,Q}
\end{pmatrix}.
\]
Then we have the difference equations $\widetilde{\psi}^{(-1)}=\widetilde{\Phi}\widetilde{\psi}$. Note that $\cL_{r+1,1}=\cL_{\fs,\fQ}$.

{\textbf{Case (I)}}. $\cL_{r+1,1}(\theta)\notin k$ (which is equivalent to $\cL_{r+1,1}(\theta)\notin \ok$ by Remark~\ref{Rem:k-dep}). Since $\left\{ \cL_{\fs,\fQ}(\theta),\cL_{w,Q}(\theta),1 \right\}$ are linearly dependent over $k$, by \cite[Thm.~3.1.1]{ABP04} there exists
\[
{\mathbf{f}}=(f_{0},f_{1},\ldots,f_{r+2})\in \Mat_{1\times (r+3)}(\ok[t])
\]
so that ${\mathbf{f}}\widetilde{\psi}=0$ and ${\mathbf{f}}(\theta)\widetilde{\psi}(\theta)=0$, which describes a nontrivial $k$-linear relation among $\left\{ \cL_{\fs,\fQ}(\theta),\cL_{w,Q}(\theta),1 \right\}$. Note that
\[
f_{1}(\theta)=\cdots=f_{r}(\theta)=0.
\]

Now we assume that $f_{r+2}(\theta)\neq 0$. Note if $f_{r+2}(\theta)=0$, then we have that
\[
\left\{\cL_{\fs,\fQ}(\theta)= \cL_{r+1,1}(\theta), 1\right\}
\]
are linearly dependent over $k$, i.e., $\cL_{r+1,1}(\theta)\in k$, and this case will be included in the case (II) below.

If we put $\tilde{\mathbf{f}}:=\frac{1}{f_{r+2}}{\mathbf{f}}\in \Mat_{1\times (r+3)}(\ok(t))$, then all entries of $\tilde{\mathbf{f}}$ are regular at $t=\theta$. Considering the Frobenius twisting-action $(\cdot)^{(-1)}$ on the equation  $\tilde{{\bf{f}}} \widetilde{\psi}=0$ and subtracting the resulting equation from  $\tilde{{\mathbf{f}}} \widetilde{\psi}=0$, we obtain
\begin{equation}\label{E:f-ftwistPhi}
\left( \tilde{{\mathbf{f}}}-\tilde{{\mathbf{f}}}^{(-1)} \widetilde{\Phi} \right) \widetilde{\psi}=0.
\end{equation}
Explicit calculations show that $\tilde{{\mathbf{f}}}-\tilde{{\mathbf{f}}}^{(-1)} \widetilde{\Phi} = ( B,B_{1},\dots,B_{r+1},0)$, where
\begin{equation}\label{E:BIdentity}
\begin{split}
B&:={\textstyle \frac{f_{0}}{f_{r+2}}-(\frac{f_{0}}{f_{r+2}})^{(-1)}}\\
  B_{1} &:= {\textstyle \frac{f_{1}}{f_{r+2}}-\left( \frac{f_{1}}{f_{r+2}} \right)^{(-1)}(t-\theta)^{w}-\left( \frac{f_{2}}{f_{r+2}} \right)^{(-1)}Q_{1}^{(-1)} (t-\theta)^{w}-Q^{(-1)}(t-\theta)^{w};} \\
   B_{2} &:= {\textstyle \frac{f_{2}}{f_{r+2}}-\left( \frac{f_{2}}{f_{r+2}} \right)^{(-1)}(t-\theta)^{s_{2}+\cdots+s_{r}}-\left( \frac{f_{3}}{f_{r+2}} \right)^{(-1)}Q_{2}^{(-1)}(t-\theta)^{s_{2}+\cdots+s_{r}};} \\
    &\quad\vdots \\
B_{r} &:= {\textstyle \frac{f_{r}}{f_{r+2}}-\left( \frac{f_{r}}{f_{r+2}} \right)^{(-1)}(t-\theta)^{s_{r}}-\left( \frac{f_{r+1}}{f_{r+2}} \right)^{(-1)}Q_{r}^{(-1)}(t-\theta)^{s_{r}}}\\
B_{r+1}&:={\textstyle \left( \frac{f_{r+1}}{f_{r+2}} \right)- \left( \frac{f_{r+1}}{f_{r+2}} \right)^{(-1)}.}
\end{split}
\end{equation}

We claim that $B=B_{1}=\cdots=B_{r+1}=0$. Assuming this claim first, we see that
\begin{equation}\label{E:fiPhis}
\mbox{\small $\begin{pmatrix}
1& & & \\
& 1& & \\
& & \ddots& \\
f_{0}/f_{r+2} &\cdots &f_{r+1}/f_{r+2} & 1 \\
\end{pmatrix}^{(-1)}\widetilde{\Phi}=\begin{pmatrix}
1& & \\
&\Phi&\\
& &1\\
\end{pmatrix} \begin{pmatrix}
1& & & \\
& 1& & \\
& & \ddots& \\
f_{0}/f_{r+2} &\cdots &f_{r+1}/f_{r+2} & 1 \\
\end{pmatrix}.$}
\end{equation}
Let $\widetilde{M}$ be the Frobenius module defined by the matrix $\widetilde{\Phi}$. Then the equation (\ref{E:fiPhis}) gives a left $\ok(t)[\sigma]$-module homomorphism between $\ok(t)\otimes_{\ok[t]}\left({\bf{1}}\oplus M\oplus {\bf{1}} \right)$ and $\ok(t)\otimes_{\ok[t]}\widetilde{M}$. It follows from Proposition~\ref{P:CommonDen} that the denominator of each $f_{i}/f_{r+2}$ is in $\FF_{q}[t]$ for $i=0,\dots,r+1$. Now we let $b\in \FF_{q}[t]$ be the common denominator of $f_{0}/f_{r+2},\ldots,f_{r+1}/f_{r+2}$, and take $\delta_{i}:=b f_{i}/f_{r+2}\in \ok[t]$ for $i=1,\ldots,r$. Note that the vanishing of  $B_{r+1}$ implies $f_{r+1}/f_{r+2}\in \FF_{q}(t)$ and hence $a:=b f_{r+1}/f_{r+2}\in \FF_{q}[t]$. Multiplying by $b$ on the both sides of (\ref{E:BIdentity}) one finds exactly the identities (\ref{E:Eqdelta}), which imply that $a*M+_{B}b*N$ represents a trivial class in $\Ext_{\cF}^{1}({\bf{1}},M')$.

To prove the claim above, we consider \eqref{E:f-ftwistPhi}, which is expanded as
\begin{equation}\label{E:equaBL}
B+B_{1}\Omega^{s_{1}+\cdots+s_{r}}+B_{2}\Omega^{s_{2}+\cdots+s_{r}}\cL_{21}+\cdots+B_{r}\Omega^{s_{r}}\cL_{r1}+B_{r+1}\cL_{r+1,1}=0.
\end{equation}
For each $1\leq \ell < j \leq r+1$ and any non-negative integer $n$, by Proposition~\ref{P:formulaLsQ}
\begin{equation}\label{E:ForLji theta}
 \cL_{j\ell}(\theta^{q^{n}})=\cL_{j\ell}(\theta)^{q^{n}},
\end{equation}
which is nonzero by hypothesis.  Since $B$ and each $B_{i}$ are rational functions in $\ok(t)$, $B$ and $B_{i}$ are defined at $t=\theta^{q^{n}}$ for sufficiently large integers $n$. We further note that $\Omega$ has a simple zero at $t=\theta^{q^{i}}$ for each positive integer $i$. Specializing (\ref{E:equaBL}) at $t=\theta^{q^{n}}$ shows that $B(\theta^{q^{n}})=B_{r+1}(\theta^{q^{n}})=0$ for any $n\gg 0$ because of $\cL_{r+1,1}(\theta)\notin  \ok$.  It follows that $B=B_{r+1}=0$.

Next, dividing \eqref{E:equaBL} by $\Omega^{s_{r}}$ and then specializing at $t=\theta^{q^{n}}$, we see from \eqref{E:ForLji theta} that $B_{r}(\theta^{q^{n}})=0$ for all sufficiently large integers $n$. It follows that $B_{r}=0$.  Furthermore, using \eqref{E:equaBL} and repeating the arguments above we can show that $B_{r-1}=\cdots=B_{1}=0$, whence the desired claim.

{\textbf{Case (II)}}. $\cL_{r+1,1}(\theta)\in k$.
In this case, we apply \cite[Thm.~3.1.1]{ABP04} to the difference equations
\[
\left(\psi:=\begin{pmatrix}
1\\
\Omega^{s_{1}+\cdots+s_{r}}\\
\Omega^{s_{2}+\cdots+s_{r}}\cL_{21}  \\
\vdots\\
 \cL_{r+1,1}
\end{pmatrix}\right)^{(-1)}=\begin{pmatrix}
1&\\
& \Phi\\
\end{pmatrix}
\begin{pmatrix}
1\\
\Omega^{s_{1}+\cdots+s_{r}}\\
\Omega^{s_{2}+\cdots+s_{r}}\cL_{21}  \\
\vdots\\
 \cL_{r+1,1}
\end{pmatrix}
\] for the $k$-linear dependence of $\left\{1,\cL_{r+1,1}(\theta) \right\}$. So there exists ${\bf{f}}=(f_{0},f_{1},\ldots,f_{r+1})\in \Mat_{1\times (r+2)}(\ok[t])$ for which ${\bf{f}}\psi=0$ and ${\bf{f}}(\theta)\psi(\theta)=0$ represents the $k$-linear dependence of $1$ and $\cL_{r+1,1}(\theta)$.
Then the arguments are similar in this case as in the previous one when putting $\tilde{\bf{f}}:=\frac{1}{f_{r+1}}{\bf{f}}$, and we omit them.  Moreover, they
exactly show that the class of $M$ is an $a$-torsion element in $\Ext_{\cF}^{1}({\mathbf{1}},M')$, where $a\in \FF_{q}[t]$ is the common denominator of $f_{0}/f_{r+1},\ldots,f_{r}/f_{r+1}$ in this case.

\subsection{Proof of Theorem~\ref{T:ThmGeneral} (a)($\Leftarrow$) and (b)}
Suppose that there exist  $a,b\in \FF_{q}[t]$ (not both zero) for which $a*M+_{B}b*N$ represents a trivial class in $\Ext_{\cF}^{1}({\mathbf{1}},M')$. Note that $a*M+_{B}b*N$ is defined by the matrix given in (\ref{E:matrix-X}). Let $\Psi'$ be the square matrix of size $r$ cut from the upper left square of $\Psi$, so that $\Psi'^{(-1)}=\Phi'\Psi'$.  Define
\[
Y:=\begin{pmatrix}
\Omega^{s_{1}+\cdots+s_{r}}& & & & & \\
\Omega^{s_{2}+\cdots+s_{r}}\cL_{21}&\Omega^{s_{2}+\cdots+s_{r}} & & & & \\
\vdots& \Omega^{s_{3}+\cdots+s_{r}} \cL_{32} &\ddots & & & \\
\vdots&\vdots &\ddots &\ddots & & \\
\Omega^{s_{r}}\cL_{r1}&\Omega^{s_{r}}\cL_{r2} & &\ddots &\Omega^{s_{r}} & \\
a\cL_{(r+1),1}+b\cL_{w,Q}&a\cL_{(r+1),2} &\cdots &\cdots &a\cL_{(r+1),r} &1 \\
\end{pmatrix},
\] and note that $Y^{(-1)}=XY$.

Since the class of $a*M+_{B}b*N$ is trivial in $\Ext_{\cF}^{1}({\mathbf{1}},M')$, there exist $\delta_{1},\ldots,\delta_{r}\in \ok[t]$ satisfying (\ref{E:deltaX}). Putting \[\delta:=\begin{pmatrix}
1& & & \\
& 1& & \\
& & \ddots& \\
\delta_{1} &\cdots &\delta_{r} & 1 \\
\end{pmatrix}\] and  $Y':=\delta Y$, we then have $Y'^{(-1)}= \left( \begin{smallmatrix} \Phi'&\\ &1\\ \end{smallmatrix} \right)Y'$. Since also
\[
\begin{pmatrix}
\Psi'&\\
&1\\
\end{pmatrix}^{(-1)}=\begin{pmatrix}
\Phi'&\\
&1\\
\end{pmatrix} \begin{pmatrix}
\Psi'&\\
&1\\
\end{pmatrix},
\]
it follows from \cite[\S 4.1.6]{P08} that there exist $\nu_{1},\ldots,\nu_{r}\in \FF_{q}(t)$ so that
\[
Y'=\begin{pmatrix}
\Psi'&\\
&1\\
\end{pmatrix} \begin{pmatrix}
I_r&\\
\nu_{1},\ldots,\nu_{r}&1\\
\end{pmatrix}.
\]
This then implies that
\begin{equation}\label{E:nudeltaL}
  \begin{split}
    \nu_{1} &= \delta_{1}\Omega^{s_{1}+\cdots+s_{r}} + \delta_{2}\Omega^{s_{2}+\cdots+s_{r}}\cL_{21} + \cdots + \delta_{r}\Omega^{s_{r}}\cL_{r1} +a\cL_{r+1,1}+b\cL_{w,Q};\\
    \nu_{2}& = \delta_{2}\Omega^{s_{2}+\cdots+s_{r}} + \delta_{3}\Omega^{s_{3}+\cdots+s_{r}}\cL_{32} + \cdots+\delta_{r}\Omega^{s_{r}}\cL_{r2}+a\cL_{r+1,2}; \\
    &\;\; \vdots \\
    \nu_{r-1} &= \delta_{r-1}\Omega^{s_{r-1}+s_{r}}+\delta_{r}\Omega^{s_{r}}\cL_{r,r-1}+a\cL_{r+1,r-1}; \\
    \nu_{r} &= \delta_{r}\Omega^{s_{r}}+a\cL_{r+1,r}.
  \end{split}
\end{equation}
We note that in fact each $\nu_{i}$ is in $\FF_{q}[t]$, since the right-hand side of each equation above is in the Tate algebra $\TT$. For any positive integer $n$, by specializing both sides of \eqref{E:nudeltaL} at $t=\theta^{q^{n}}$ and using \eqref{E:ForLji theta} together with the fact that $\Omega$ has a simple zero at $t=\theta^{q^{n}}$, we obtain that
\begin{equation}\label{E:nuzeta}
  \begin{split}
  \nu_{1}(\theta)^{q^{n}} &=  \nu_{1}(\theta^{q^{n}})=\left( a(\theta)\cL_{r+1,1}(\theta)+b(\theta)\cL_{w,Q}(\theta) \right)^{q^{n}};\\
   \nu_{2}(\theta)^{q^{n}} &= \nu_{2}(\theta^{q^{n}})=\left( a(\theta)\cL_{r+1,2}(\theta)   \right)^{q^{n}}; \\
    &\;\;\vdots \\
   \nu_{r-1}(\theta)^{q^{n}} &= \nu_{r-1}(\theta^{q^{n}})=\left( a(\theta)\cL_{r+1,r-1}(\theta)\right)^{q^{n}}; \\
   \nu_{r}(\theta)^{q^{n}} &= \nu_{r}(\theta^{q^{n}})=\left( a(\theta)\cL_{r+1,r}(\theta)\right)^{q^{n}}.
  \end{split}
\end{equation}
Since we are working in characteristic $p$, taking the $q^{n}$-th root of both sides of \eqref{E:nuzeta} shows that $\cL_{r+1,1}(\theta)(=\cL_{\fs,\fQ}(\theta))$, $\cL_{w,Q}(\theta)$ and $1$ are linearly dependent over $k$, and each of $\cL_{r+1,2}(\theta),\ldots,\cL_{r+1,r}(\theta)$ is rational in~$k$.

\section{Applications to multizeta values and multiple polylogarithms}\label{sec:applicationsMZV}
In this section, we apply Theorem~\ref{T:ThmGeneral} to establish criteria for MZV's and CMPL's at algebraic points to be Eulerian and for MZV's to be zeta-like.

\subsection{Anderson-Thakur polynomials}\label{sec:AT polynomials} Define $D_{0}=1$ and $D_{i}:=\prod_{j=0}^{i-1}\bigl(\theta^{q^{i}}-\theta^{q^{j}} \bigr)$ for $i\in \NN$. For a non-negative integer $n$, we express $n$ as
\[
n=\sum_{i=0}^{\infty} n_{i}q^{i} \quad \textnormal{($0\leq n_{i}\leq q-1$, $n_{i}=0$ for $i\gg 0$)},
\]
and we recall the definition of the Carlitz factorial,
\[
\Gamma_{n+1}:=\prod_{i=0}^{\infty} D_{i}^{n_{i}}\in A.
\]
We put $G_{0}(y):=1$ and define polynomials $G_{n}(y)\in \FF_{q}[t,y]$ for $n\in \NN$ by the product
\[
G_{n}(y)=\prod_{i=1}^{n}\left( t^{q^{n}}-y^{q^{i}} \right).
\]
Note that $G_{n+1}(y^{q})=\left( t-y^{q} \right)^{q^{n+1}} G_{n}(y)^{q}$.

For $n=0,1,2,\ldots$ we define the sequence of Anderson-Thakur polynomials $H_{n}\in A[t]$ by the generating function identity
\[
\left( 1-\sum_{i=0}^{\infty} \frac{ G_{i}(\theta) }{ D_{i}|_{\theta=t}} x^{q^{i}}  \right)^{-1}=\sum_{n=0}^{\infty} \frac{H_{n}}{\Gamma_{n+1}|_{\theta=t}} x^{n}.
\]
We note that for $0\leq n\leq q-1$ we have $H_{n}=1$. We have made a change of notation by $t\leftarrow T$, $\theta\leftarrow y$ from \cite[(3.7.1)]{AT90} in order to match better the notation in \cite{AT09}, \cite{C14}. It is shown in \cite[3.7.3]{AT90} that when one regards $H_{n}$ as a polynomial in $\theta$ over $\FF_{q}[t]$, one has $\deg_{\theta}H_{n}\leq \frac{nq}{q-1}$, whence
\begin{equation}\label{E:degHn}
\|H_{n}\|_{\infty}\leq |\theta|_{\infty}^{{nq}/(q-1)} .
\end{equation}

The crucial identity developed in \cite{AT90}, \cite{AT09} is that
\[
(H_{s-1}\Omega^{s})^{(d)}(\theta) = \frac{\Gamma_{s}S_{d}(s)}{\tilde{\pi}^{s}}, \quad \forall s \in \NN,d\in \ZZ_{\geq 0}
\]
where  $S_{d}(s)$ is the power sum
\[
S_{d}(s):=\sum_{\substack{ a\in A_{+} \\ \deg a=d }}
\frac{1}{a^{s}}\in k.
\]
It follows that if we put $\fQ=(H_{s_{1}-1},\ldots,H_{s_{r}-1})$, then by (\ref{E:degHn}) $\fQ$ satisfies the hypothesis (\ref{E:HopythesisQ}). Furthermore, specialization of the series
\[
\cL_{\fs,\fQ}:=\sum_{i_{1}>\cdots>i_{r}\geq 0} \left( \Omega^{s_{r}} H_{s_{r}-1}\right)^{(i_{r})} \cdots\left( \Omega^{s_{1}}H_{s_{1}-1}\right) ^{(i_{1})}
\]
at $t=\theta$ is equal to
\[
\Gamma_{s_{1}}\cdots\Gamma_{s_{r}}\zeta_{A}(s_{1},\ldots,s_{r})/\tilde{\pi}^{s_{1}+\cdots+s_{r}}.
\]

\subsection{A criterion for Eulerian MZV's}\label{sec:criterionEulerian}
We continue with the notation defined in the previous section.  Given $\fs=(s_{1},\ldots,s_{r})\in \NN^{r}$, we let $\fQ=(H_{s_{1}-1},\ldots,H_{s_{r}-1})$ and $M$ (resp. $M'$) be defined by $\Phi$ as in \eqref{E:Phi s} (resp. by $\Phi'$ as in \eqref{E:Phi s'}).  For this choice of $\fQ$ each series $\cL_{j\ell}$ defined in \eqref{E:Ljl} evaluates at $t=\theta$ as
\[
\cL_{j\ell}(\theta)=\Gamma_{s_{\ell}}\cdots\Gamma_{s_{j-1}}
\zeta_{A}(s_{\ell},\ldots,s_{j-1})/\tilde{\pi}^{s_{\ell}+\cdots+s_{j-1}}.
\]
These values are non-vanishing by the work of Thakur~\cite[Thm.~4]{T09a},  and so it satisfies the non-vanishing hypothesis~\eqref{E:nonzeroHyp}.  In particular,
\begin{equation}\label{E:Lr+1to1}
  \begin{split}
    \cL_{r+1,1}(\theta) &=
    \Gamma_{s_{1}}\cdots\Gamma_{s_{r}}\zeta_{A}(s_{1},\ldots,s_{r})/\tilde{\pi}^{s_{1}+\cdots+s_{r}} \\
    \cL_{r+1,2}(\theta) &=
    \Gamma_{s_{2}}\cdots\Gamma_{s_{r}}\zeta_{A}(s_{2},\ldots,s_{r})/\tilde{\pi}^{s_{2}+\cdots+s_{r}} \\
     & \;\;\vdots \\
   \cL_{r+1,r}(\theta) &= \Gamma_{s_{r}}\zeta_{A}(s_{r})/\tilde{\pi}^{s_{r}}.
  \end{split}
\end{equation}

Note that the hypotheses in Theorem~\ref{T:ThmGeneral} are satisfied and thus applying Corollary~\ref{Cor:CorThmGeneral} in this situation, we obtain the following criterion to determine when a given multizeta value is Eulerian.

\begin{theorem}\label{T:MainThm}
The multizeta value $\zeta_{A}(s_{1},\cdots,s_{r})$ is Eulerian if and only if the class of $M$ is a torsion element in the $\FF_{q}[t]$-module $\Ext_{\cF}^{1}({\mathbf{1}},M')$.
\end{theorem}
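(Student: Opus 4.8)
The plan is to apply Corollary~\ref{Cor:CorThmGeneral} directly with the specific choice $\fQ=(H_{s_1-1},\ldots,H_{s_r-1})$ of Anderson--Thakur polynomials. First I would verify that all the running hypotheses of Theorem~\ref{T:ThmGeneral} (and hence of Corollary~\ref{Cor:CorThmGeneral}) are met in this situation. The tuple $\fQ$ satisfies~\eqref{E:HopythesisQ} by the degree bound~\eqref{E:degHn}, so the series $\cL_{j\ell}$ of~\eqref{E:Ljl} are entire; the non-vanishing hypothesis~\eqref{E:nonzeroHyp} for each $\cL_{j\ell}(\theta)$ follows from the identification $\cL_{j\ell}(\theta)=\Gamma_{s_\ell}\cdots\Gamma_{s_{j-1}}\zeta_A(s_\ell,\ldots,s_{j-1})/\tilde\pi^{s_\ell+\cdots+s_{j-1}}$ recorded just above, together with Thakur's non-vanishing theorem~\cite[Thm.~4]{T09a}. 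With these checks in place, Corollary~\ref{Cor:CorThmGeneral} asserts that $\cL_{r+1,1}(\theta)$ lies in $k$ if and only if the class of $M$ is $\FF_q[t]$-torsion in $\Ext^1_{\cF}({\mathbf 1},M')$.

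Next I would translate the two sides of this equivalence into the language of Eulerian multizeta values. On the analytic side, the crucial Anderson--Thakur identity $(H_{s-1}\Omega^s)^{(d)}(\theta)=\Gamma_s S_d(s)/\tilde\pi^s$ gives $\cL_{r+1,1}(\theta)=\Gamma_{s_1}\cdots\Gamma_{s_r}\,\zeta_A(s_1,\ldots,s_r)/\tilde\pi^{w}$; since $\Gamma_{s_1}\cdots\Gamma_{s_r}\in A\subset k^\times$, the condition $\cL_{r+1,1}(\theta)\in k$ is equivalent to $\zeta_A(s_1,\ldots,s_r)/\tilde\pi^{w}\in k$, which is by definition the statement that $\zeta_A(s_1,\ldots,s_r)$ is Eulerian. (One may also invoke Remark~\ref{Rem:k-dep} to note that ``in $k$'' and ``in $\ok$'' coincide here, though this is not strictly needed for the statement.) The motivic side requires no further translation: ``the class of $M$ is a torsion element in the $\FF_q[t]$-module $\Ext^1_{\cF}({\mathbf 1},M')$'' is exactly what appears in Corollary~\ref{Cor:CorThmGeneral}. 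Combining the two gives the theorem.

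The main substance of the argument is therefore not in this proof at all but in Theorem~\ref{T:ThmGeneral}, whose proof invokes the ABP linear independence criterion~\cite[Thm.~3.1.1]{ABP04} and the common-denominator Proposition~\ref{P:CommonDen}; here the only genuine work is the bookkeeping that reconciles the abstract hypotheses of that theorem with the concrete arithmetic data attached to $\fs$. The one point that deserves a sentence of care is the verification that $\cL_{j\ell}(\theta)=\Gamma_{s_\ell}\cdots\Gamma_{s_{j-1}}\zeta_A(s_\ell,\ldots,s_{j-1})/\tilde\pi^{s_\ell+\cdots+s_{j-1}}$ for every pair $(\ell,j)$, not merely for $(\ell,j)=(1,r+1)$: this is what lets us conclude the non-vanishing hypothesis~\eqref{E:nonzeroHyp} uniformly, and it follows because $\cL_{j\ell}$ is, by its definition in~\eqref{E:Ljl}, literally the series $\cL_{\fs',\fQ'}$ attached to the truncated tuples $\fs'=(s_\ell,\ldots,s_{j-1})$ and $\fQ'=(H_{s_\ell-1},\ldots,H_{s_{j-1}-1})$, so the depth-$(j-\ell)$ specialization formula applies to it verbatim. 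Once this is noted, the proof is immediate from Corollary~\ref{Cor:CorThmGeneral}.
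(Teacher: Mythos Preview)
Your proposal is correct and follows essentially the same approach as the paper: verify that the choice $\fQ=(H_{s_1-1},\ldots,H_{s_r-1})$ satisfies the hypotheses of Theorem~\ref{T:ThmGeneral} (via~\eqref{E:degHn} for~\eqref{E:HopythesisQ} and Thakur's non-vanishing for~\eqref{E:nonzeroHyp}), then apply Corollary~\ref{Cor:CorThmGeneral} and translate $\cL_{r+1,1}(\theta)\in k$ into the Eulerian condition using the identification~\eqref{E:Lr+1to1}. Your added remark that the identification $\cL_{j\ell}(\theta)=\Gamma_{s_\ell}\cdots\Gamma_{s_{j-1}}\zeta_A(s_\ell,\ldots,s_{j-1})/\tilde\pi^{s_\ell+\cdots+s_{j-1}}$ must be checked for \emph{all} pairs $(\ell,j)$ to secure~\eqref{E:nonzeroHyp} is a useful point of care that the paper records in the paragraph preceding the theorem.
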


Applying Theorem~\ref{T:ThmGeneral}(b), we obtain additional information about when a given multizeta value is Eulerian.

\begin{corollary}\label{C:SimuEulerian} Let $(s_{1},\ldots,s_{r})\in \NN^{r}$ with $r\geq 2$. Suppose that $\zeta_A(s_1,\dots,s_r)$ is Eulerian.  Then the following hold.
\begin{enumerate}
\item[(a)] Each of the multizeta values
\[
\zeta_{A}(s_{2},\ldots,s_{r}),\ \zeta_{A}(s_{3},\ldots,s_{r}),\ \dots,\ \zeta_{A}(s_{r})
\]
is also Eulerian.

\item[(b)] Each $\zeta_{A}(s_{i})$ is also Eulerian for $i=1,\ldots,r$. That is, each $s_{i}$ is divisible by $q-1$.
\end{enumerate}
 \end{corollary}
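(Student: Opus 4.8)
The plan is to deduce Corollary~\ref{C:SimuEulerian} directly from Theorem~\ref{T:ThmGeneral}(b) applied with the Anderson-Thakur choice $\fQ=(H_{s_1-1},\dots,H_{s_r-1})$.

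\medskip

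\noindent\emph{Proof of part (a).} Suppose $\zeta_A(s_1,\dots,s_r)$ is Eulerian, so $\cL_{r+1,1}(\theta)=\Gamma_{s_1}\cdots\Gamma_{s_r}\zeta_A(s_1,\dots,s_r)/\tilde\pi^{w}\in k$ by \eqref{E:Lr+1to1}. To invoke Theorem~\ref{T:ThmGeneral}(b) we need an auxiliary datum $Q\in\ok[t]$ with $\|Q\|_\infty<|\theta|_\infty^{wq/(q-1)}$ and $\cL_{w,Q}(\theta)\neq 0$: we take $Q:=H_{w-1}$, which satisfies the Gauss-norm bound by \eqref{E:degHn} and for which $\cL_{w,Q}(\theta)=\Gamma_{w}\zeta_A(w)/\tilde\pi^{w}\neq 0$ by \cite[Thm.~4]{T09a}. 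The non-vanishing hypothesis \eqref{E:nonzeroHyp} holds for all $\cL_{j\ell}$ by the same theorem of Thakur, as noted above. Since $\cL_{r+1,1}(\theta)\in k$, the set $\{\cL_{\fs,\fQ}(\theta),\cL_{w,Q}(\theta),1\}$ is $k$-linearly dependent (the first and last elements already are). Theorem~\ref{T:ThmGeneral}(b) then gives $\cL_{r+1,2}(\theta),\dots,\cL_{r+1,r}(\theta)\in k$. Reading off \eqref{E:Lr+1to1}, $\cL_{r+1,j}(\theta)=\Gamma_{s_j}\cdots\Gamma_{s_r}\zeta_A(s_j,\dots,s_r)/\tilde\pi^{s_j+\cdots+s_r}$, so each ratio $\zeta_A(s_j,\dots,s_r)/\tilde\pi^{s_j+\cdots+s_r}$ lies in $k$; that is, each $\zeta_A(s_j,\dots,s_r)$ is Eulerian for $j=2,\dots,r$.

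\medskip

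\noindent\emph{Proof of part (b).} Applying part (a) repeatedly (formally: induct downward on the starting index, or simply note that part (a) applied to the Eulerian tuple $(s_j,\dots,s_r)$ yields that $(s_i,\dots,s_r)$ is Eulerian for $i>j$), we conclude that the depth-one value $\zeta_A(s_i)$ is Eulerian for every $i=1,\dots,r$ --- for $i=1$ this is the hypothesis after discarding nothing, and for $i\geq 2$ it follows from (a) applied to the original tuple. By Carlitz's result recalled in the introduction (see \cite[Prop.~1.11.2, Cor.~3.8.4]{AT90}, \cite[Thm.~3.1]{Yu91} and the discussion around \eqref{E:CarlitzFormula}), $\zeta_A(n)/\tilde\pi^{n}\in k$ if and only if $n$ is even, i.e.\ $(q-1)\mid n$. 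Hence each $s_i$ is divisible by $q-1$. \qed

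\medskip

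The only subtlety, and the step I would be most careful about, is checking that the hypotheses of Theorem~\ref{T:ThmGeneral} are genuinely met for the tuple $(s_1,\dots,s_r)$ with the Anderson-Thakur $Q$'s --- in particular the condition $r\geq 2$ (which is exactly the standing hypothesis of the corollary) and the non-vanishing of every $\cL_{j\ell}(\theta)$ and of $\cL_{w,Q}(\theta)$. All of these reduce to Thakur's non-vanishing theorem \cite[Thm.~4]{T09a}, so there is no real obstacle; the argument is a clean unwinding of \eqref{E:Lr+1to1} once Theorem~\ref{T:ThmGeneral}(b) is in hand.
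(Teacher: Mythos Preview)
Your proof of part~(a) is correct and matches the paper's: both unwind \eqref{E:Lr+1to1} after invoking Theorem~\ref{T:ThmGeneral}(b) for the Anderson--Thakur choice of $\fQ$.

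Part~(b), however, has a genuine gap. You assert that ``the depth-one value $\zeta_A(s_i)$ is Eulerian for every $i=1,\dots,r$'' and that ``for $i=1$ this is the hypothesis after discarding nothing.'' But the hypothesis is that the full depth-$r$ value $\zeta_A(s_1,\dots,s_r)$ is Eulerian, not that $\zeta_A(s_1)$ is; and part~(a) only yields the \emph{tail} values $\zeta_A(s_j,\dots,s_r)$ for $j\ge 2$, whose sole depth-one member is $\zeta_A(s_r)$. Iterating part~(a) gains nothing further: applied to $(s_j,\dots,s_r)$ it returns only the shorter tails, never an isolated $\zeta_A(s_i)$ with $i<r$. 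So your route from (a) to ``each $\zeta_A(s_i)$ is Eulerian'' is broken for $i<r$.

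The paper's argument reverses the order of implications in (b). From (a) one knows each tail $\zeta_A(s_j,\dots,s_r)$ is Eulerian, hence $\tilde\pi^{s_j+\cdots+s_r}\in k_\infty$ (as the MZV itself lies in $k_\infty$). Since $\tilde\pi^{n}\in k_\infty$ iff $(q-1)\mid n$, this gives $(q-1)\mid(s_j+\cdots+s_r)$ for every $j=1,\dots,r$; taking successive differences yields $(q-1)\mid s_i$ for each $i$. Only then does Carlitz's formula give that each $\zeta_A(s_i)$ is Eulerian. Your citation of the depth-one equivalence is the right endpoint, but you need the divisibility chain first.
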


\begin{proof}
Using \eqref{E:Lr+1to1}, part (a) follows from Theorem~\ref{T:ThmGeneral}(b).  For part (b), we note that $\tilde{\pi}^{n}\in k_{\infty}$ if and only if $(q-1)\mid n$. If $\zeta_{A}(s_{1},\ldots,s_{r})$ is Eulerian, then part (a) implies that so are $\zeta_{A}(s_{2},\ldots,s_{r})$, $\zeta_{A}(s_{3},\ldots,s_{r}),\ldots, \zeta_{A}(s_{r})$. Therefore, we have the divisibility relations
\[
(q-1)\mid (s_{1}+\cdots+s_{r}),\ \ldots,\ (q-1)\mid (s_{r-1}+s_{r}),\ (q-1)\mid s_{r}.
\]
It follows that each $s_{i}$ is divisible by $q-1$ for $i=1,\ldots,r$, and so each $\zeta_{A}(s_{i})$ is Eulerian by the original results of Carlitz~\cite{Carlitz}.
\end{proof}

\subsection{Application to multiple polylogarithms at algebraic points}\label{sec:CMPLs}
We now consider Carlitz multiple polylogarithms~\cite{C14}, which are generalizations of Carlitz polylogarithms~\cite{Carlitz, AT90}.  We state a criterion to determine when a nonzero Carlitz multiple polylogarithm at an algebraic point is Eulerian and show how to deduce it from Corollary~\ref{Cor:CorThmGeneral}.

Define $L_{0}:=1$ and $L_i :=(\theta-\theta^{q})\cdots(\theta-\theta^{q^{i}})$ for $i\in \NN$. Given an $r$-tuple of positive integers $\fs=(s_{1},\ldots,s_{r})\in \NN^{r}$, its associated Carlitz multiple polylogarithm, abbreviated CMPL, is defined by
\begin{equation}\label{E:DefCMPL}
 \Li_{\fs}(z_{1},\ldots,z_{r}):=\sum_{i_{1}>\cdots> i_{r}\geq 0} \frac{z_{1}^{q^{i_{1}}}\cdots z_{r}^{q^{i_{r}}}} {L_{i_{1}}^{s_{1}}\cdots L_{i_{r}}^{s_{r}}}.
\end{equation}
We further denote by
\[
\mathbb{D}_{\mathfrak{s}}:= \{\bu=(u_{1},\ldots,u_{r})\in \CC_{\infty}^{r} \mid \textnormal{$\Li_{\fs}(\bu)$ converges} \},
\]
the convergence domain of $\Li_{\fs}$.  We can describe $\mathbb{D}_{\mathfrak{s}}$ as
\[
\mathbb{D}_{\fs} = \left\{ \bu \in \CC_{\infty}^r \Bigm|
  \bigl\lvert u_{1}/(\theta^{\frac{qs_{1}}{q-1}} ) \bigr\rvert_{\infty}^{q^{i_{1}}}
  \cdots \bigl\lvert u_{r}/ (\theta^{\frac{qs_{r}}{q-1}}) \bigr\rvert_{\infty}^{q^{i_{r}}}
  \rightarrow 0 \textnormal{\ as\ } 0\leq  i_{r}< \cdots< i_{1}\rightarrow \infty   \right\}.
\]
The weight of $\Li_{\fs}$ is defined to be $\wt(\fs):=\sum_{i=1}^{r}s_{i}$ and its depth is defined to be $r$.  We do not know whether $\Li_{\fs}(\bu)$ is non-vanishing for arbitrary $\bu\in\mathbb{D}_{\fs}$, but it is non-vanishing when $\bu$ lies in the smaller domain
\[
\mathbb{D}_{\fs}':= \left\{ (u_{1},\ldots,u_{r})\in \CC_{\infty}^{r} \mid \lvert u_{i}\rvert_{\infty} < q^{\frac{s_{i}q}{q-1}} \textnormal{for $i=1,\ldots,r$} \right\}
\]
(see~\cite[Remark~5.1.4]{C14}).

As a generalization of the work of Anderson and Thakur~\cite{AT90}, the first author of the present paper showed that for any $\fs=(s_{1},\ldots,s_{r})\in \NN^{r}$, $\zeta_{A}(s_{1},\ldots,s_{r})$ is a $k$-linear combination of $\Li_{\fs}$ at some integral points in $A^{r}\cap \mathbb{D}_{\fs}$~\cite[Thm.~5.5.2]{C14}. From now on, we fix an $r$-tuple $\fs=(s_{1},\ldots,s_{r})\in \NN^{r}$ and $\bu=(u_{1},\ldots,u_{r})\in (\bar{k}^{\times})^{r}\cap \mathbb{D}_{\fs}$, and suppose that $\Li_{\fs}(\bu)\neq 0$. Following the terminology for multizeta values, we shall call $\Li_{\fs}(\bu)$ \emph{Eulerian} if the ratio $\Li_{\fs}(\bu)/\tilde{\pi}^{\wt(\fs)}$ lies in $k$. Note that it is shown in \cite{C14} that $\Li_{\fs}(\bu)/\tilde{\pi}^{\wt(\fs)}\in \bar{k}$ if and only if $\Li_{\fs}(\bu)/\tilde{\pi}^{\wt(\fs)}\in k$.

We put $\fQ:=\bu=(u_{1},\ldots,u_{r})$ and let $M$ (resp. $M'$) be the Frobenius module defined by $\Phi$ as in \eqref{E:Phi s} (resp. $\Phi'$ as in \eqref{E:Phi s'}). For $1\leq i < j\leq r$, we put
\[
\Li_{\fs_{ij}}(\bu_{ij}):=\Li_{(s_{i},\dots,s_{j})}(u_{i},\dots,u_{j}).
\] Note that in this setting we have $\cL_{\fs,\fQ}(\theta)=\Li_{\fs}(\bu)/\tilde{\pi}^{\wt(\fs)}$, and hence $\Li_{\fs}(\bu)$ is Eulerian if and only if $\cL_{\fs,\fQ}(\theta)\in k$.  Applying Corollary~\ref{Cor:CorThmGeneral} and  Theorem~\ref{T:ThmGeneral}(b), we obtain the following result.

\begin{theorem}\label{T:MainThmCMPL}
 For any $\fs=(s_{1},\ldots,s_{r})\in \NN^{r}$ and $\bu=(u_{1},\ldots,u_{r})\in (\bar{k}^{\times})^{r}\cap \mathbb{D}_{\fs}$, we let $M$ and $M'$ be defined as above. Suppose that $\Li_{\fs_{ij}}(\bu_{ij})$ is nonzero for all $1\leq i <j\leq r$. Then we have
\begin{enumerate}
\item[(a)]  $\Li_{\fs}(\bu)$ is Eulerian if and only if $M$ represents a torsion element in the $\FF_{q}[t]$-module $\Ext_{\cF}^{1}({\mathbf{1}},M')$.
\item[(b)]  If $\Li_{\fs}(\bu)$ is Eulerian, then each of
$\Li_{(s_{2},\ldots,s_{r})}(u_{2},\ldots,u_{r}),\ldots,\Li_{s_{r}}(u_{r})$
is also Eulerian.
\end{enumerate}
\end{theorem}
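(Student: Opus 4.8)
The plan is to deduce this theorem from the already-proven Theorem~\ref{T:ThmGeneral} and its Corollary~\ref{Cor:CorThmGeneral} by checking that the hypotheses hold in the present CMPL setting, and then translating the conclusions. First I would set $\fQ := \bu = (u_1,\dots,u_r)$ and observe that since $\bu \in (\bar k^\times)^r \cap \mathbb{D}_{\fs}$, the defining convergence condition for $\mathbb{D}_{\fs}$ is exactly the hypothesis \eqref{E:HopythesisQ} with constant polynomials $Q_i = u_i$; this is the point where one uses that $\mathbb{D}_{\fs}$ was characterized in terms of the precise product condition. Thus the matrices $\Phi = \Phi_{\fs,\fQ}$ and $\Phi' = \Phi'_{\fs,\fQ}$ of \eqref{E:Phi s}--\eqref{E:Phi s'} and the series $\cL_{j\ell}$ of \eqref{E:Ljl} are all defined. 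A direct inspection of \eqref{E:Ljl} with $Q_\ell = u_\ell$ shows $\cL_{j\ell}(t) = \Omega^{s_\ell + \cdots + s_{j-1}}\sum_{i_\ell > \cdots > i_{j-1}\ge 0} u_{j-1}^{q^{i_{j-1}}}\cdots u_\ell^{q^{i_\ell}}/(\cdots)$, and evaluating at $t=\theta$ using $\Omega(\theta)^{-1} = \tilde\pi$ together with $(t-\theta^q)\cdots(t-\theta^{q^i})|_{t=\theta} = L_i$ (up to the appropriate sign/normalization already recorded in \S\ref{sec:AT polynomials}) gives $\cL_{j\ell}(\theta) = \Li_{(s_\ell,\dots,s_{j-1})}(u_\ell,\dots,u_{j-1})/\tilde\pi^{s_\ell+\cdots+s_{j-1}}$. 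In particular $\cL_{r+1,1}(\theta) = \Li_{\fs}(\bu)/\tilde\pi^{\wt(\fs)}$, which is the identity already flagged in the paragraph preceding the statement.

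Next I would verify the non-vanishing hypothesis \eqref{E:nonzeroHyp}: by the previous paragraph $\cL_{j\ell}(\theta) \neq 0$ is equivalent to $\Li_{(s_\ell,\dots,s_{j-1})}(u_\ell,\dots,u_{j-1}) \neq 0$, i.e. (reindexing $\ell \leftrightarrow i$, $j-1 \leftrightarrow j$) to $\Li_{\fs_{ij}}(\bu_{ij}) \neq 0$ for all $1 \le i < j \le r$, which is precisely the hypothesis of the theorem. With all hypotheses of Theorem~\ref{T:ThmGeneral} in place, part (a) is immediate from Corollary~\ref{Cor:CorThmGeneral}: $\Li_{\fs}(\bu)$ is Eulerian iff $\cL_{\fs,\fQ}(\theta) \in k$ iff $M$ is torsion in $\Ext^1_{\cF}(\mathbf{1},M')$. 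For part (b), I would note that the Eulerian condition forces $\{\cL_{\fs,\fQ}(\theta), 1\}$, hence a fortiori $\{\cL_{\fs,\fQ}(\theta), \cL_{w,Q}(\theta), 1\}$ for any admissible auxiliary $Q$, to be $k$-linearly dependent, so Theorem~\ref{T:ThmGeneral}(b) applies and yields $\cL_{r+1,\ell}(\theta) \in k$ for $\ell = 2,\dots,r$; translating back via the polylog identities of the first step gives $\Li_{(s_\ell,\dots,s_r)}(u_\ell,\dots,u_r)/\tilde\pi^{s_\ell+\cdots+s_r} \in k$, i.e. each truncation is Eulerian. One small point: to apply Theorem~\ref{T:ThmGeneral}(b) literally one needs an auxiliary $Q \in \ok[t]$ with $\|Q\|_\infty < |\theta|_\infty^{wq/(q-1)}$ and $\cL_{w,Q}(\theta)\neq 0$; taking $Q = H_{w-1}$ (the Anderson--Thakur polynomial, satisfying \eqref{E:degHn} and with $\cL_{w,Q}(\theta)$ a nonzero multiple of $\zeta_A(w)$ by Thakur's non-vanishing) works, or one can simply invoke Corollary~\ref{Cor:CorThmGeneral} with $b=0$ and re-read the end of the proof of Theorem~\ref{T:ThmGeneral}(a)($\Leftarrow$), where the relations \eqref{E:nuzeta} already give $\cL_{r+1,\ell}(\theta) \in k$ directly.

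The main obstacle is not conceptual but bookkeeping: one must be careful that the evaluation $\cL_{j\ell}(\theta) = \Li_{(s_\ell,\dots,s_{j-1})}(u_\ell,\dots,u_{j-1})/\tilde\pi^{s_\ell+\cdots+s_{j-1}}$ is correct with the right normalization of $\Omega$ and the sign conventions in $L_i$ versus $(t-\theta^q)\cdots(t-\theta^{q^i})$, and that the index shift between the $(j,\ell)$ labeling of the $\cL_{j\ell}$ and the $(i,j)$ labeling of the $\Li_{\fs_{ij}}$ is handled consistently. Once that dictionary is fixed, the theorem is a formal consequence of the results of \S\ref{sec:preliminary}; indeed, since the only place the choice $\fQ = (H_{s_1-1},\dots,H_{s_r-1})$ entered in \S\ref{sec:criterionEulerian} was to identify $\cL_{j\ell}(\theta)$ with a multizeta value, replacing it by $\fQ = \bu$ and re-running the identical argument gives Theorem~\ref{T:MainThmCMPL} verbatim.
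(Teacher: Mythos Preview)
Your proposal is correct and follows exactly the paper's approach: the paper's entire proof is the sentence ``Applying Corollary~\ref{Cor:CorThmGeneral} and Theorem~\ref{T:ThmGeneral}(b), we obtain the following result,'' preceded by the same setup $\fQ := \bu$ and the identity $\cL_{\fs,\fQ}(\theta) = \Li_{\fs}(\bu)/\tilde\pi^{\wt(\fs)}$ that you reproduce. Your treatment is more explicit than the paper's---in particular your observation that Theorem~\ref{T:ThmGeneral}(b) literally requires an auxiliary $Q$, and that $Q = H_{w-1}$ works, is a detail the paper glosses over---but the strategy is identical. One minor bookkeeping point: the reindexing $\ell \mapsto i$, $j-1 \mapsto j$ turns $1 \le \ell < j \le r+1$ into $1 \le i \le j \le r$ (not $i < j$), so the non-vanishing hypothesis \eqref{E:nonzeroHyp} actually also demands the depth-one values $\Li_{s_\ell}(u_\ell) \neq 0$; this is a slight imprecision in the paper's own hypothesis rather than an error on your part.
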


\begin{remark}
The non-vanishing hypothesis of $\Li_{\fs_{ij}}(\bu_{ij})$ is non-empty.  Indeed, for those points $\bu\in \mathbb{D}_{\fs}'$ we have that $\Li_{\fs_{ij}}(\bu_{ij})\neq 0$ for all $1\leq i < j\leq r$. See~\cite[Rem.~5.1.5]{C14}.
\end{remark}

\subsection{Applications to zeta-like multizeta values}\label{sec:applications}
In this section we apply Corollary~\ref{Cor:CorThmGeneral} to confirm a conjecture of Lara Rodr\'{i}guez and Thakur.  As defined by Thakur a multizeta value $\zeta_{A}(s_{1},\ldots,s_{r})$ with weight $w:=\sum_{i=1}^{r}s_{i}$ is called \emph{zeta-like} if the ratio $\zeta_{A}(s_{1},\ldots,s_{r})/\zeta_{A}(w)$ is algebraic over $k$. Note that if $\zeta_{A}(s_{1},\ldots,s_{r})$ is zeta-like, then by \cite[Cor.~2.3.3]{C14} the ratio $\zeta_{A}(s_{1},\ldots,s_{r})/\zeta_{A}(w)$ is actually in $k$.

\subsubsection{Zeta-like MZV's} For a multizeta value $\zeta_{A}(s_{1},\ldots,s_{r})$, Lara Rodr\'{i}guez and Thakur~\cite{LRT13} conjectured the following assertion.
\begin{conjecture}
If $\zeta_{A}(s_{1},\ldots,s_{r})$ is zeta-like, then $\zeta_{A}(s_{2},\ldots,s_{r})$ is Eulerian.
\end{conjecture}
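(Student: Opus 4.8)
The plan is to obtain the assertion as a direct consequence of Theorem~\ref{T:ThmGeneral}(b), by encoding the zeta-like hypothesis as a $k$-linear dependence among the special values $\{\cL_{\fs,\fQ}(\theta),\cL_{w,Q}(\theta),1\}$ for an appropriately chosen weight-$w$ datum $Q$. Write $w=\sum_{i=1}^{r}s_{i}$ and assume $r\geq 2$ (the statement is vacuous for $r=1$). First I would set up the $t$-motivic data exactly as in \S\ref{sec:criterionEulerian}: take $\fQ=(H_{s_{1}-1},\ldots,H_{s_{r}-1})$, so that the Frobenius modules $M,M'$ attached to $\Phi,\Phi'$ are those of $\zeta_{A}(s_{1},\ldots,s_{r})$, the non-vanishing hypothesis \eqref{E:nonzeroHyp} holds by Thakur~\cite[Thm.~4]{T09a}, and the evaluations \eqref{E:Lr+1to1} give $\cL_{r+1,\ell}(\theta)=\Gamma_{s_{\ell}}\cdots\Gamma_{s_{r}}\,\zeta_{A}(s_{\ell},\ldots,s_{r})/\tilde{\pi}^{s_{\ell}+\cdots+s_{r}}$ for $1\leq\ell\leq r$. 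For the auxiliary datum I would take $Q:=H_{w-1}$; then \eqref{E:degHn} gives $\|Q\|_{\infty}\leq|\theta|_{\infty}^{(w-1)q/(q-1)}<|\theta|_{\infty}^{wq/(q-1)}$, while the Anderson--Thakur identity recalled in \S\ref{sec:AT polynomials} specializes the associated depth-one series to $\cL_{w,Q}(\theta)=\Gamma_{w}\,\zeta_{A}(w)/\tilde{\pi}^{w}$, which is nonzero because $\zeta_{A}(w)\neq 0$. Thus all hypotheses of Theorem~\ref{T:ThmGeneral} are met, with $N$ the module built from $\Phi'$ and this $Q$ as in \eqref{E:PhiN}.

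Next I would feed in the zeta-like hypothesis. By \cite[Thm.~2.3.2]{C14} (see also \cite[Cor.~2.3.3]{C14}), the ratio $\zeta_{A}(s_{1},\ldots,s_{r})/\zeta_{A}(w)$, being algebraic over $k$, in fact lies in $k$. Combining this with $\cL_{\fs,\fQ}(\theta)=\Gamma_{s_{1}}\cdots\Gamma_{s_{r}}\,\zeta_{A}(s_{1},\ldots,s_{r})/\tilde{\pi}^{w}$ and $\cL_{w,Q}(\theta)=\Gamma_{w}\,\zeta_{A}(w)/\tilde{\pi}^{w}$, and using that the Carlitz factorials lie in $A$, one gets
\[
\frac{\cL_{\fs,\fQ}(\theta)}{\cL_{w,Q}(\theta)}=\frac{\Gamma_{s_{1}}\cdots\Gamma_{s_{r}}}{\Gamma_{w}}\cdot\frac{\zeta_{A}(s_{1},\ldots,s_{r})}{\zeta_{A}(w)}\in k.
\]
Hence there is a (necessarily nonzero) $c\in k$ with $\cL_{\fs,\fQ}(\theta)-c\,\cL_{w,Q}(\theta)=0$, so $\{\cL_{\fs,\fQ}(\theta),\cL_{w,Q}(\theta),1\}$ is linearly dependent over $k$ (with the coefficient of $1$ equal to $0$).

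Finally, Theorem~\ref{T:ThmGeneral}(b) applies and yields that each of $\cL_{r+1,2}(\theta),\ldots,\cL_{r+1,r}(\theta)$ lies in $k$. Reading off $\cL_{r+1,2}(\theta)=\Gamma_{s_{2}}\cdots\Gamma_{s_{r}}\,\zeta_{A}(s_{2},\ldots,s_{r})/\tilde{\pi}^{s_{2}+\cdots+s_{r}}$ and dividing by the nonzero element $\Gamma_{s_{2}}\cdots\Gamma_{s_{r}}\in A$, we conclude $\zeta_{A}(s_{2},\ldots,s_{r})/\tilde{\pi}^{s_{2}+\cdots+s_{r}}\in k$; that is, $\zeta_{A}(s_{2},\ldots,s_{r})$ is Eulerian, and the same reading shows that every tail $\zeta_{A}(s_{i},\ldots,s_{r})$ is Eulerian, which by Carlitz's theorem forces each $s_{i}$ to be divisible by $q-1$. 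I do not anticipate a genuine obstacle in executing this plan: the substance — the appeal to the ABP linear independence criterion \cite[Thm.~3.1.1]{ABP04} together with the rationality-of-denominators input of Proposition~\ref{P:CommonDen}, which forces the annihilating polynomial into $\FF_{q}[t]$ — is already absorbed into Theorem~\ref{T:ThmGeneral}. The only points requiring care are bookkeeping ones: verifying that $Q=H_{w-1}$ respects the Gauss-norm bound and the non-vanishing requirement of Theorem~\ref{T:ThmGeneral}, and tracking the Carlitz-factorial normalizations so that membership of the $\cL_{r+1,\ell}(\theta)$ in $k$ transfers cleanly to the Eulerian property of the corresponding multizeta values.
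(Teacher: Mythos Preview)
Your proposal is correct and follows essentially the same route as the paper's proof of Corollary~\ref{C:zeta-like}: choose $\fQ=(H_{s_1-1},\ldots,H_{s_r-1})$ and $Q=H_{w-1}$, observe that the zeta-like hypothesis yields a $k$-linear dependence among $\{\cL_{\fs,\fQ}(\theta),\cL_{w,Q}(\theta),1\}$, and invoke Theorem~\ref{T:ThmGeneral}(b). One small slip at the very end: the conclusion only forces $s_2,\ldots,s_r$ to be divisible by $q-1$ (as the paper states), not $s_1$, since the full multizeta value is only assumed zeta-like, not Eulerian.
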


In what follows, we confirm this conjecture stated as Corollary~\ref{C:zeta-like}, which is a consequence of the following theorem. For more  conjectures concerning zeta-like MZV's, we refer the reader to \cite{LRT13}.

\subsubsection{The setting and results}
Given $\fs=(s_{1},\ldots,s_{r})\in \NN^{r}$, put $\fQ=(H_{s_{1}-1},\ldots,H_{s_{r}-1})$ and $Q:=H_{w-1}$, where $\left\{ H_{n}\right\}$ be the Anderson-Thakur polynomials given in \S\ref{sec:AT polynomials} and $w:=\sum_{i=1}^{r}s_{i}$. Note that
\[\cL_{w,Q}(\theta)=\frac{\Gamma_{w}\zeta_{A}(w)}{\tilde{\pi}^{w}}, \]
and so by \eqref{E:degHn} the conditions of Theorem~\ref{T:ThmGeneral} are satisfied.

Let $M\in \cF$ (resp. $N\in \cF$) be defined by $\Phi$ given in \eqref{E:Phi s} (resp. by the matrix given in \eqref{E:PhiN}). Then applying Theorem~\ref{T:ThmGeneral} we have the following criterion.
\begin{theorem}\label{T:Zeta-like}
Let $\fs=(s_{1},\ldots,s_{r})\in \NN^{r}$ with $r\geq 2$, and let notation be given as above. Then we have that $\left\{ \zeta_{A}(s_{1},\ldots,s_{r}),\zeta_{A}(w),\tilde{\pi}^{w}  \right\}$ are linearly dependent over $k$ if and only if the classes of $M$ and $N$ are $\FF_{q}[t]$-linearly dependent in $\Ext_{\cF}^{1}({\bf{1}},M')$.
\end{theorem}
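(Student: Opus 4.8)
The plan is to recognize Theorem~\ref{T:Zeta-like} as a direct specialization of Theorem~\ref{T:ThmGeneral}~(a), once the data $(\fs,\fQ,Q)$ are chosen as in the preceding paragraph. So the first thing I would do is verify that all the hypotheses of Theorem~\ref{T:ThmGeneral} are met in this setting. With $\fQ = (H_{s_1-1},\ldots,H_{s_r-1})$, the bound \eqref{E:degHn} on the Gauss norms of Anderson-Thakur polynomials shows that $\fQ$ satisfies \eqref{E:HopythesisQ}; likewise $Q = H_{w-1}$ satisfies $\|Q\|_\infty \leq |\theta|_\infty^{wq/(q-1)}$ (in fact one needs the strict inequality, which holds since $\deg_\theta H_{w-1} < \tfrac{wq}{q-1}$ by \cite[3.7.3]{AT90}, as already noted in \S\ref{sec:AT polynomials}). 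The non-vanishing hypothesis \eqref{E:nonzeroHyp}, namely $\cL_{j\ell}(\theta)\neq 0$ for all $1\leq \ell < j\leq r+1$, follows exactly as in \S\ref{sec:criterionEulerian}: by the Anderson-Thakur identity $\cL_{j\ell}(\theta)$ equals $\Gamma_{s_\ell}\cdots\Gamma_{s_{j-1}}\,\zeta_A(s_\ell,\ldots,s_{j-1})/\tilde\pi^{s_\ell+\cdots+s_{j-1}}$, which is nonzero by Thakur's non-vanishing theorem \cite[Thm.~4]{T09a}. Finally $\cL_{w,Q}(\theta) = \Gamma_w\zeta_A(w)/\tilde\pi^w \neq 0$, again by \cite{T09a} (or Carlitz). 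Thus Theorem~\ref{T:ThmGeneral} applies verbatim with $M$, $M'$, $N$ as defined.

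Next I would translate the conclusion of Theorem~\ref{T:ThmGeneral}~(a) into the language of the statement. By the discussion in \S\ref{sec:AT polynomials}, with this choice of $\fQ$ and $Q$ we have
\[
\cL_{\fs,\fQ}(\theta) = \frac{\Gamma_{s_1}\cdots\Gamma_{s_r}\,\zeta_A(s_1,\ldots,s_r)}{\tilde\pi^w},\qquad
\cL_{w,Q}(\theta) = \frac{\Gamma_w\,\zeta_A(w)}{\tilde\pi^w}.
\]
Since $\Gamma_{s_1}\cdots\Gamma_{s_r}$ and $\Gamma_w$ are nonzero elements of $A\subset k$, the set $\{\cL_{\fs,\fQ}(\theta),\cL_{w,Q}(\theta),1\}$ is $k$-linearly dependent if and only if $\{\zeta_A(s_1,\ldots,s_r)/\tilde\pi^w,\ \zeta_A(w)/\tilde\pi^w,\ 1\}$ is, which in turn is equivalent (multiplying through by $\tilde\pi^w$) to the $k$-linear dependence of $\{\zeta_A(s_1,\ldots,s_r),\ \zeta_A(w),\ \tilde\pi^w\}$. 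Theorem~\ref{T:ThmGeneral}~(a) then says this is equivalent to the $\FF_q[t]$-linear dependence of the classes of $M$ and $N$ in $\Ext^1_\cF(\mathbf{1},M')$, which is exactly the assertion.

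There is essentially no genuine obstacle here: the real work is all contained in the proof of Theorem~\ref{T:ThmGeneral} (the ABP-criterion argument and Proposition~\ref{P:CommonDen}), and this theorem is pure bookkeeping on top of it. The only point that warrants a line of care is the scaling by the Carlitz factorials $\Gamma_{s_i}$ and $\Gamma_w$ — one must note these lie in $k^\times$, so they do not affect $k$-linear dependence, and that passing between $\{\cL,\cL,1\}$ and $\{\zeta_A,\zeta_A,\tilde\pi^w\}$ only rescales the dependence relation (cf. Remark~\ref{Rem:k-dep}). Accordingly my write-up would be two or three sentences: check the hypotheses of Theorem~\ref{T:ThmGeneral} hold (via \eqref{E:degHn} and \cite[Thm.~4]{T09a}), invoke Theorem~\ref{T:ThmGeneral}~(a), and unwind the factorials to get the stated equivalence.
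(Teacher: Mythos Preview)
Your proposal is correct and follows exactly the paper's approach: the paper simply states that the hypotheses of Theorem~\ref{T:ThmGeneral} are satisfied (via \eqref{E:degHn} and the non-vanishing of MZV's) and then says ``applying Theorem~\ref{T:ThmGeneral} we have the following criterion,'' which is precisely your plan of verifying the hypotheses and translating $\{\cL_{\fs,\fQ}(\theta),\cL_{w,Q}(\theta),1\}$ to $\{\zeta_A(\fs),\zeta_A(w),\tilde\pi^w\}$ via the Carlitz factorials. Your write-up is in fact more detailed than the paper's (which leaves the factorial-rescaling and the strict-inequality check implicit), but the substance is identical.
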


\begin{corollary}\label{C:zeta-like}
If $\zeta_{A}(s_{1},\ldots,s_{r})$ is zeta-like, then each of
\[
\zeta_{A}(s_{2},\ldots,s_{r}),\zeta_{A}(s_{3},\ldots,s_{r})\ldots,\zeta_{A}(s_{r})
\]
is also Eulerian. In particular, each of $s_{2},\ldots,s_{r}$ is divisible by $q-1$.
\end{corollary}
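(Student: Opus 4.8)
The plan is to deduce this directly from Theorem~\ref{T:ThmGeneral}(b), applied in the setting just set up, where $\fQ=(H_{s_{1}-1},\ldots,H_{s_{r}-1})$ and $Q=H_{w-1}$. First I would record that, by \cite[Cor.~2.3.3]{C14}, the hypothesis that $\zeta_{A}(s_{1},\ldots,s_{r})$ is zeta-like means precisely that $\zeta_{A}(s_{1},\ldots,s_{r})=c\,\zeta_{A}(w)$ for some $c\in k$; equivalently, the set $\left\{\zeta_{A}(s_{1},\ldots,s_{r}),\zeta_{A}(w),\tilde{\pi}^{w}\right\}$ is linearly dependent over $k$ (with vanishing coefficient on $\tilde{\pi}^{w}$). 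Dividing this relation by $\tilde{\pi}^{w}$ and clearing the nonzero constants $\Gamma_{s_{1}},\ldots,\Gamma_{s_{r}},\Gamma_{w}\in k^{\times}$, and using the evaluations $\cL_{\fs,\fQ}(\theta)=\Gamma_{s_{1}}\cdots\Gamma_{s_{r}}\zeta_{A}(s_{1},\ldots,s_{r})/\tilde{\pi}^{w}$ and $\cL_{w,Q}(\theta)=\Gamma_{w}\zeta_{A}(w)/\tilde{\pi}^{w}$ from \S\ref{sec:AT polynomials}, this is the same as saying that $\left\{\cL_{\fs,\fQ}(\theta),\cL_{w,Q}(\theta),1\right\}$ is linearly dependent over $k$.

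Next I would invoke Theorem~\ref{T:ThmGeneral}(b). All of its hypotheses are met for this choice of $\fQ$ and $Q$: the bound \eqref{E:degHn} yields both \eqref{E:HopythesisQ} for $\fQ$ and the inequality $\|Q\|_{\infty}<|\theta|_{\infty}^{wq/(q-1)}$ (since $\deg_{\theta}H_{w-1}\le (w-1)q/(q-1)$ strictly below $wq/(q-1)$), while Thakur's non-vanishing theorem \cite[Thm.~4]{T09a} gives $\cL_{j\ell}(\theta)\neq 0$ for all $1\le\ell<j\le r+1$ as well as $\cL_{w,Q}(\theta)=\Gamma_{w}\zeta_{A}(w)/\tilde{\pi}^{w}\neq 0$, exactly as already noted in the setup. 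Hence the $k$-linear dependence established in the previous paragraph forces each of $\cL_{r+1,2}(\theta),\ldots,\cL_{r+1,r}(\theta)$ to lie in $k$. By \eqref{E:Lr+1to1} we have $\cL_{r+1,j}(\theta)=\Gamma_{s_{j}}\cdots\Gamma_{s_{r}}\,\zeta_{A}(s_{j},\ldots,s_{r})/\tilde{\pi}^{s_{j}+\cdots+s_{r}}$ for $j=2,\ldots,r$, and since the Carlitz factorials are units in $k$, this says exactly that $\zeta_{A}(s_{j},\ldots,s_{r})$ is Eulerian for each $j=2,\ldots,r$.

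Finally, for the divisibility statement I would argue as in the proof of Corollary~\ref{C:SimuEulerian}(b): if $\zeta_{A}(s_{j},\ldots,s_{r})$ is Eulerian then $\tilde{\pi}^{s_{j}+\cdots+s_{r}}\in k_{\infty}$, and since $\tilde{\pi}^{n}\in k_{\infty}$ if and only if $(q-1)\mid n$, we get $(q-1)\mid(s_{j}+\cdots+s_{r})$ for every $j=2,\ldots,r$. A descending induction on $j$ starting from $(q-1)\mid s_{r}$ then gives $(q-1)\mid s_{j}$ for each $j=2,\ldots,r$. I do not expect a genuine obstacle here: all the substance lies in Theorem~\ref{T:ThmGeneral}(b) (and, if one prefers to phrase it via $t$-modules, in Theorem~\ref{T:Zeta-like}), so the only point requiring care is the triple translation between $k$-dependence of the multizeta values, of the special values $\cL_{r+1,j}(\theta)$, and of the classes of $M$ and $N$ in $\Ext_{\cF}^{1}({\mathbf{1}},M')$ — all routine given the normalizations recorded above. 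Note in particular that one cannot conclude $(q-1)\mid s_{1}$, since zeta-likeness relates $\zeta_{A}(s_{1},\ldots,s_{r})$ to $\zeta_{A}(w)$ but provides no information relating $\zeta_{A}(w)$ to $\tilde{\pi}^{w}$.
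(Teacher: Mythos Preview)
Your proposal is correct and follows essentially the same route as the paper: both proofs translate the zeta-like hypothesis into $k$-linear dependence of $\{\cL_{\fs,\fQ}(\theta),\cL_{w,Q}(\theta),1\}$ and then invoke Theorem~\ref{T:ThmGeneral}(b) with $\fQ=(H_{s_{1}-1},\ldots,H_{s_{r}-1})$, $Q=H_{w-1}$. You simply spell out more of the bookkeeping (verification of \eqref{E:HopythesisQ} and the bound on $\|Q\|_{\infty}$, and the divisibility argument for $s_{2},\ldots,s_{r}$ via $\tilde{\pi}^{n}\in k_{\infty}\Leftrightarrow (q-1)\mid n$), whereas the paper leaves these as understood from the surrounding setup and from Corollary~\ref{C:SimuEulerian}(b).
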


\begin{proof}
Note that
\[\cL_{\fs,\fQ}(\theta)=\Gamma_{s_{1}}\cdots\Gamma_{s_{r}}\zeta_{A}(s_{1},\ldots,s_{r})/\tilde{\pi}^{w},\hbox{ }\cL_{w,Q}(\theta)=\Gamma_{w}\zeta_{A}(w)/\tilde{\pi}^{w} \] and for each $2\leq\ell\leq r$,
\[\cL_{r+1,\ell}(\theta)=\Gamma_{s_{\ell}}\cdots\Gamma_{s_{r}}\zeta_{A}(s_{\ell},\ldots,s_{r})/\tilde{\pi}^{s_{\ell}+\cdots+s_{r}}   .\]
Since by hypothesis $\zeta_{A}(s_{1},\ldots,s_{r})$ is zeta-like, the result follows from Theorem~\ref{T:ThmGeneral}(b).
\end{proof}

\begin{remark}
In this remark, we give an algebraic illustration for the result in the corollary above. For an $r$-tuple $\fs=(s_{1},\ldots,s_{r})\in \NN^{r}$ with $r\geq 2$ and $w:=\sum_{i=1}^{r}s_{i}$, we continue the notation $M_{\fs},N_{\fs},M_{\fs}'$ above but we add the subscript $\fs$ to emphasize that these objects are associated to $\fs$. We put $\fs':=(s_{2},\ldots,s_{r})$ and let $M_{\fs'},M_{\fs'}'$ be those objects above associated to $\fs'$. It is shown in the proof of Theorem~\ref{T:EffectiveCriterion} that one has the exact sequence of $\FF_{q}[t]$-modules (via Theorems~\ref{T:Ext1} and \ref{T:t-module}):
\[
\xymatrix{
0\ar[r] & \Ext_{\cF}^{1}\left( {\bf{1}},C^{\otimes w} \right) \ar[r]  & \Ext_{\cF}^{1}\left( {\bf{1}},M_{\fs}' \right) \ar@{->>}[r]^{\pi}  & \Ext_{\cF}^{1}\left( {\bf{1}},M_{\fs'}' \right)  \ar[r]  & 0,
}
\] and note that $N_{\fs}\in  \Ext_{\cF}^{1}\left( {\bf{1}},C^{\otimes w} \right)$ and $\pi (M_{\fs})=M_{\fs'}$.  Suppose that $\zeta_{A}(\fs)$ is zeta-like. Then by Remark~\ref{Rem:Coeff:a} there exist $a,b\in \FF_{q}[t]$ with $a\neq 0$ so that $a*M_{\fs}+b* N_{\fs}$ represents a trivial class in  $\Ext_{\cF}^{1}\left( {\bf{1}},M_{\fs}' \right)$. It follows that $\pi\left( a*M_{\fs}+b* N_{\fs} \right)= a* M_{\fs'}$ represents a trivial class in $\Ext_{\cF}^{1}\left( {\bf{1}},M_{\fs'}' \right) $, and hence by Theorem~\ref{T:MainThm} $\zeta_{A}(\fs')$ is Eulerian.

\end{remark}

\section{Operating on $t$-modules}\label{sec:t-modules}

The purpose of this section is to reformulate our criteria via $t$-modules.
\subsection{The structure of rational torsion points of $\bC^{\otimes n}$}

\subsubsection{Definition of $t$-modules}
We first review the definition of $t$-modules~\cite{A86}. Let $\tau=(x\mapsto x^{q}):\CC_{\infty}\rightarrow \CC_{\infty}$ be the Frobenius $q$-th power operator and let $\CC_{\infty}[\tau]$ be the twisted polynomial ring in $\tau$ over $\CC_{\infty}$ subject to the relation $\tau \alpha=\alpha^{q} \tau$ for $\alpha\in \CC_{\infty}$. For a positive integer $d$, a $d$-dimensional $t$-module is a pair $(E,\phi)$, where $E$ is the $d$-dimensional algebraic group $\GG_{a}^{d}$ and $\phi$ is an $\FF_{q}$-linear ring homomorphism
\[
\phi: \FF_{q}[t]\rightarrow \Mat_{d}(\CC_{\infty}[\tau])
\]
so that when we write $\phi_{t}=\alpha_{0}+\sum_{i}\alpha_{i}\tau^{i}$ with $\alpha_{i}\in \Mat_{d}(\CC_{\infty})$, $\alpha_{0}-\theta I_{d}$ is a nilpotent matrix. In this way, $E(\CC_{\infty})$ is equipped with an $\FF_{q}[t]$-module structure via the map $\phi$. For a subring $R\subset \CC_{\infty}$ containing $A$, we say that the $t$-module $E$ is defined over $R$ if $\alpha_{i}$ lies in $\Mat_{d}(A)$ for all $i\geq 0$.

For any $d$-dimensional  $t$-module $(E,\phi)$, Anderson~\cite{A86} showed that one has the $\FF_{q}$-linear function $\exp_{E}:\CC_{\infty}^{d}\rightarrow \CC_{\infty}^{d}$ satisfying that for ${\bf{z}}=(z_{1},\ldots,z_{d})^{\tr}$ and any $a\in \FF_{q}[t]$,
\begin{enumerate}
\item[$\bullet$] $\exp_{E}(\bz)\equiv \bz$ (mod degree $q$)
\item[$\bullet$] $\exp_{E}(\partial\phi_{a}(\bz) )= \phi_{a}\left(\exp_{E}(\bz) \right) $,
\end{enumerate}
where $\partial \phi_{a}$ is the differential of the morphism $\phi_{a}$ at the identity element of $E$. If $\exp_{E}$ is surjective, then $E$ is called uniformizable.

\subsubsection{Anderson-Thakur special points}\label{sec:AT special points}
For a positive integer $n$, the $n$-th tensor power of the Carlitz $\FF_{q}[t]$-module denoted by $\bC^{\otimes n}$ is an $n$-dimensional $t$-module defined over $A$ together with the $\FF_{q}$-linear ring homomorphism \[ [\cdot]_{n}:\FF_{q}[t]\rightarrow \Mat_{n}(\CC_{\infty}[\tau])  \]
given by \[[t]_{n}=\theta I_{n}+N_{n}+E_{n} \tau ,\]
where
\[N_{n}:=\begin{pmatrix}
0& 1& \cdots &0\\
\vdots & \ddots&\ddots & \vdots\\
\vdots &  &\ddots &1\\
0&\cdots &\cdots & 0
\end{pmatrix}, \quad
E_{n}:= \begin{pmatrix}
0& \cdots&\cdots  &0 \\
\vdots&   & &\vdots\\
\vdots& & &\vdots \\
1& \cdots& \cdots &0
\end{pmatrix}.
\]
Note that for $n=1$, the definition above is the Carlitz $\FF_{q}[t]$-module $\bC$. It is shown in \cite[Cor.~2.5.8]{AT90} that $\bC^{\otimes n}$ is uniformizable and the kernel of $\exp_{\bC^{\otimes n}}$ is a rank one $\FF_{q}[t]$-module (via the $\partial [a]_{n}$-action) with a generator of the form
\[ \lambda_{n}=\begin{pmatrix} * \\ \vdots  \\ \tilde{\pi}^{n} \end{pmatrix}
\in \CC_{\infty}^{n}. \]

To find the connection with $\zeta_{A}(n)$, Anderson and Thakur defined the following special points (see \cite[(3.8.2)]{AT90}).

\begin{definition}\label{Def:special points}
For each positive integer $n$, we let $H_{n-1}\in A[t]$ be the Anderson-Thakur polynomial in \S\ref{sec:AT polynomials}. We write $H_{n-1}=\sum_{i\geq 0}h_{ni}\theta^{i}$ with $h_{ni}\in \FF_{q}[t]$. Then we define
\[ Z_{n}:=\sum_{i\geq 0}[h_{ni}]_{n} \begin{pmatrix} 0 \\ \vdots \\0 \\ \theta^{i} \end{pmatrix}
 \in \bC^{\otimes n}(A)\] and call it an Anderson-Thakur special point.
\end{definition}

It is shown in \cite[Thm.~3.8.3]{AT90} that there exists a vector of the form
\[z_{n}= \left(
                                                      \begin{array}{c}
                                                        * \\
                                                        \vdots \\
                                                        * \\
                                                        \Gamma_{n}\zeta_{A}(n) \\
                                                      \end{array}
                                                    \right)
\]
so that
\[
\exp_{\bC^{\otimes n}}(z_{n})=Z_{n}.
\]

\begin{remark}\label{Rem:torsion Zn}
For a positive integer $n$ {\it{even}}, we put
\[
a:=\frac{\Gamma_{n+1}}{\Gamma_{n}}\den\BC(n)|_{\theta = t},
\]
where $\den\BC(n)$ denotes the denominator of the $n$-th Bernoulli-Carlitz number.
Then by the formula \eqref{E:CarlitzFormula} of Carlitz, the property that $\Ker \exp_{\bC^{\otimes n}}$ is a rank one $\FF_{q}[t]$-module, the functional equation $\exp_{\bC^{\otimes n}}(\partial [a]_{n}\bz)=[a]_{n}\left(\exp_{\bC^{\otimes n}}(\bz) \right)$ and \cite[Thm.~2.3]{Yu91}, we see that $Z_{n}$ is an $a$-torsion point in $\bC^{\otimes n}(A)$.
\end{remark}

\subsubsection{The structure of $\bC^{\otimes n}(k)_{\rm{tor}}$}
For any nonzero polynomial $f \in \FF_{q}[t]$, denote by $\bC^{\otimes n}[f]$ the set of $f$-torsion elements:
\[
\bC^{\otimes n}[f]:=\left\{\bx\in \bC^{\otimes n}(\ok)= \ok^{n}\mid [f]_{n}(\bx)={\bf{0}}  \right\}.
\]
We further define the set of rational torsion points of $\bC^{\otimes n}$:
\[
\bC^{\otimes n}(k)_{\mathrm{tor}}:=\left\{\bx\in \bC^{\otimes n}(k)= k^{n}\mid [f]_{n}(\bx)={\bf{0}}\hbox{ for some nonzero }f\in \FF_{q}[t]  \right\}.
\]
Note that in \cite[Prop.~1.11.2]{AT90} Anderson and Thakur showed that $\bC^{\otimes n}(k)_{\rm{tor}}$ is trivial if $n$ is {\textit{odd}} (ie., $(q-1)\nmid n$). The following result is the structure of $\bC^{\otimes n}(k)_{\rm{tor}}$ when $n$ is {\textit{even}}, and we thank Y.-L. Kuan for providing us a proof of the following lemma.
\begin{lemma}\label{L:RationalTorsions} Let $n$ be a positive integer divisible by $q-1$. We decompose $n = p^{\ell} n_1\left(q^h-1\right)$ where $p\nmid n_1$ and $h$ is the greatest integer such that $(q^h-1) \mid n$. Then
\[
\bC^{\otimes n}(k)_{\rm{tor}} = \prod_{\deg P \mid h}\bC^{\otimes n}[P^{p^{\ell}}],
\]
where the product runs through all monic irreducible polynomials $P \in \FF_{q}[t]$ with $\deg P \mid h$. In particular, the Fitting ideal of the finite $\FF_{q}[t]$-module $\bC^{\otimes n}(k)_{\rm{tor}}$ is generated by $(t^{q^{h}}-t)^{p^{\ell}}$.
\end{lemma}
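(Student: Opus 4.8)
The plan is to reduce the computation of $\bC^{\otimes n}(k)_{\mathrm{tor}}$ to a purely local problem at each monic irreducible prime $P\in\FF_q[t]$, using the primary decomposition of the torsion $\FF_q[t]$-module $\bC^{\otimes n}(\ok)_{\mathrm{tor}}=\bigoplus_P \bC^{\otimes n}[P^\infty]$ and the observation that this decomposition is Galois-stable, so it restricts to the $k$-rational points: $\bC^{\otimes n}(k)_{\mathrm{tor}}=\bigoplus_P\bigl(\bC^{\otimes n}[P^\infty]\cap k^n\bigr)$. For each $P$, the $P$-primary component $\bC^{\otimes n}[P^\infty]$ is a free $\FF_q[t]_P$-module of rank one (since $\Ker\exp_{\bC^{\otimes n}}$ has rank one over $\FF_q[t]$ and $\bC^{\otimes n}$ is uniformizable), with $\Gal(\ok/k)$ acting through a character $\chi_P\colon\Gal(\ok/k)\to \FF_q[t]_P^\times$ obtained from the Galois action on the Tate module. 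The key point will be to identify exactly which power $P^m$ of $P$ appears: writing the $k$-rational $P$-primary torsion as $\bC^{\otimes n}[P^m]$, $m$ is the largest integer for which $\chi_P$ is trivial modulo $P^m$.

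Next I would compute $\chi_P$ explicitly. For the Carlitz module itself ($n=1$), the Galois action on torsion is well understood: the cyclotomic-type character sends a Frobenius at a prime $\mathfrak{p}$ of $A$ to the corresponding element under the Carlitz--Hayes reciprocity, and the point is that over $k$ (not over a larger cyclotomic field) the rational torsion of $\bC$ is $\bC[t-\theta]\cong\FF_q$—this recovers the classical fact. For the $n$-th tensor power one has $\chi_P=\chi_{P,\bC}^{\otimes n}$ in the appropriate sense, i.e. the character is the $n$-th power of the Carlitz cyclotomic character composed with reduction mod powers of $P$; concretely, evaluating at Frobenius elements shows the character is trivial mod $P^{p^\ell}$ precisely when $p^\ell\|$ (the $p$-part of $n$) and $\deg P\mid h$, where $q^h-1$ is the prime-to-$p$ part of $n$ in the sense given. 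The combinatorial heart is the elementary fact that the multiplicative order of $\theta^{q^{\deg P}-1}$ type quantities forces $\deg P\mid h$ for triviality, together with a Frobenius/$p$-th-power argument showing the power of $P$ that survives is exactly $p^\ell$: if $x$ is $P^{p^\ell}$-torsion then $[P^{p^\ell}]_n x=0$ and one checks $[P]_n^{p^\ell}$ acts on the relevant line as multiplication by something divisible by $(\text{char}\,p)$-th powers matching $n=p^\ell n_1(q^h-1)$.

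Assembling: for each monic irreducible $P$ with $\deg P\mid h$ the $k$-rational $P$-primary part is exactly $\bC^{\otimes n}[P^{p^\ell}]$, and for $P$ with $\deg P\nmid h$ it is trivial; taking the product over all such $P$ gives the stated formula. The Fitting ideal statement then follows formally: $\bC^{\otimes n}[P^{p^\ell}]$ is a cyclic $\FF_q[t]/P^{p^\ell}$-module, its Fitting ideal is $(P^{p^\ell})$, and Fitting ideals are multiplicative over direct sums (or products) of modules with coprime support, so the Fitting ideal of $\bC^{\otimes n}(k)_{\mathrm{tor}}$ is generated by $\prod_{\deg P\mid h}P^{p^\ell}=\bigl(\prod_{\deg P\mid h}P\bigr)^{p^\ell}=(t^{q^h}-t)^{p^\ell}$, using the standard identity that $t^{q^h}-t$ is the product of all monic irreducibles of degree dividing $h$.

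I expect the main obstacle to be the explicit determination of the exponent, i.e. proving that the $k$-rational $P$-primary torsion is \emph{exactly} $\bC^{\otimes n}[P^{p^\ell}]$ and not a smaller or larger power. The ``$\supseteq$'' direction (that all of $\bC^{\otimes n}[P^{p^\ell}]$ is $k$-rational when $\deg P\mid h$) requires producing rational torsion points of the right order, which can be done using the Anderson--Thakur special points $Z_n$ together with Remark~\ref{Rem:torsion Zn} and an analysis of the denominators of Bernoulli--Carlitz numbers $\BC(n)$; the ``$\subseteq$'' direction (no larger $P$-power is rational) requires the Galois-character computation above, showing the cyclotomic character ramifies enough at $P$. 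Controlling both simultaneously—matching the $p$-part $p^\ell$ of $n$ to the exponent and the prime-to-$p$ part $q^h-1$ to the allowed degrees—is the delicate combinatorial core, and is presumably where Kuan's argument does the real work.
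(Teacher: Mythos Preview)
Your overall strategy coincides with the paper's: both reduce to the Galois action on $P$-power torsion, governed by the $n$-th power of the Carlitz cyclotomic character. The paper packages this as the Anderson--Thakur isomorphism \cite[Prop.~1.11.1]{AT90}, $\Gal\bigl(k(\bC^{\otimes n}[f])/k\bigr)\cong(\FF_q[t]/f)^{\times n}$ (the group of $n$-th powers), so that $\bC^{\otimes n}[f]\subseteq\bC^{\otimes n}(k)$ iff this group is trivial---exactly your criterion ``$\chi_P$ trivial mod $P^m$''.

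Where the paper is cleaner is in the endgame. You propose handling the two inclusions separately, invoking the special points $Z_n$ and Bernoulli--Carlitz denominators for ``$\supseteq$'', and a character/ramification argument for ``$\subseteq$''. That detour is unnecessary: once you have the Anderson--Thakur isomorphism, both inclusions reduce to the \emph{single} elementary question of when $(\FF_q[t]/P^{m})^{\times n}$ is trivial. The paper computes this directly. Split $(\FF_q[t]/P^{m})^\times\cong(\FF_q[t]/P)^\times\times(\FF_q[t]/P^m)^{(1)}$ into the cyclic quotient of order $q^{\deg P}-1$ and the $p$-group of principal units. The cyclic part is killed by $n$ iff $(q^{\deg P}-1)\mid n$, i.e.\ $\deg P\mid h$. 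For such $P$, every principal unit $1+bP$ satisfies $(1+bP)^n=(1+b^{p^\ell}P^{p^\ell})^{n_1(q^h-1)}\equiv 1\pmod{P^{p^\ell}}$, so $(\FF_q[t]/P^{p^\ell})^{\times n}$ is trivial; and $(1+P)^{p^\ell}\not\equiv 1\pmod{P^{p^{\ell+1}}}$ shows $(\FF_q[t]/P^{p^{\ell+1}})^{\times n}$ is not. This pins the exponent at exactly $p^\ell$ in one stroke, and the Fitting ideal statement follows as you say via $\prod_{\deg P\mid h}P=t^{q^h}-t$.
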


\begin{proof}
For any nonzero $f \in \FF_{q}[t]$, let $\left(\FF_{q}[t]/f\right)^{\times n}$ be the group of the $n$-th powers of elements of $\left(\FF_{q}[t]/f\right)^{\times}$. By \cite[Prop.~1.11.1]{AT90} one knows that the Galois group ${\rm{Gal}}\left(k\left(\bC^{\otimes n}[f]\right)/k\right)$ is isomorphic to $\left(\FF_{q}[t]/f\right)^{\times n}$. It follows that $\bC^{\otimes n}[f]\subseteq \bC^{\otimes n}(k)$ if and only if the group $\left(\FF_{q}[t]/f\right)^{\times n}$ is trivial.

Note that the group $\left(\FF_{q}[t]/P\right)^{\times n}$ is non-trivial if $\deg P \nmid h$. Thus, to prove the lemma it suffices to show that the group $(\FF_{q}[t]/P^{p^{\ell}})^{\times n}$ is trivial and $(\FF_{q}[t]/P^{p^{\ell+ 1}})^{\times n}$ is non-trivial for any monic irreducible polynomial $P$ with $\deg P \mid h$. Suppose that $\deg P \mid h$. Let $(\FF_{q}[t]/P^{p^{\ell}})^{(1)}$ be the kernel of the natural map from $(\FF_{q}[t]/P^{p^{\ell}})^{\times}$ to $(\FF_{q}[t]/P)^\times$. Then the group $(\FF_{q}[t]/P^{p^{\ell}})^{\times}$ is isomorphic to the direct product of $(\FF_{q}[t]/P)^\times$ and $(\FF_{q}[t]/P^{p^{\ell}})^{(1)}$. Since $\deg P \mid h$, the group $(\FF_{q}[t]/P)^{\times n}$ is trivial. Note that every element in $(\FF_{q}[t]/P^{p^{\ell}})^{(1)}$ can be represented by a polynomial of the form $a = 1 + bP$ with $b \in \FF_{q}[t]$. Then
\[
a^n = \left((1 + bP)^{p^{\ell}}\right)^{n_1(q^h - 1)} = \left(1 + b^{p^{\ell}}P^{p^{\ell}}\right)^{n_1(q^h - 1)} \equiv 1 \pmod {P^{p^{\ell}}},
\] and hence the group $(\FF_{q}[t]/P^{p^{\ell}})^{\times n}$ is trivial. On the other hand, we now consider $a = 1+P$. Since $(\FF_{q}[t]/P^{p^{\ell+1}})^{(1)}$ is a $p$-group, $a^n \neq 1$ in $(A/P^{p^{\ell+1}})^{(1)}$ if and only if $a^{p^{\ell}} \neq 1$ in $(\FF_{q}[t]/P^{p^{\ell+1}})^{(1)}$.
As it is clear that $a^{p^{\ell}} \not\equiv 1 \pmod {P^{p^{\ell + 1}}}$, the proof of the desired result is completed.
\end{proof}

\subsection{The $\Ext^{1}$-modules and $t$-modules}\label{sec:Ext^1} In this subsection, we will give an identification between certain $\Ext^{1}$-modules and $t$-modules. The key ingredient and ideas exhibited here are not new; actually they are due to G.~Anderson, who shared his unpublished notes with the authors.  Elements of these constructions are also presented in the works of Hartl and Pink~\cite{HP04} and Taelman~\cite{Taelman10}.  In what follows, we fix two $r$-tuples $\fs=(s_{1},\ldots,s_{r})\in \NN^{r}$ and $\fQ\in \ok[t]^{r}$ satisfying \eqref{E:HopythesisQ}. Associated these two $r$-tuples, we let $M$ (resp. $M'$) be the Frobenius module defined by $\Phi$ as in \eqref{E:Phi s} (resp. $\Phi'$ as in \eqref{E:Phi s'}).

\begin{theorem}\label{T:Ext1}
Let $\left\{m_{1},\ldots,m_{r}\right\}$ be a $\ok[t]$-basis of $M'$ on which the $\sigma$-action is presented by the matrix $\Phi'$. Let $M\in \Ext_{\cF}^{1}({\mathbf{1}},M')$ be defined by the matrix
\[ \begin{pmatrix}
\Phi'& 0\\
f_{1},\ldots,f_{r}&1\\
\end{pmatrix}.  \] Then the map
\[ \mu:=\left( M\mapsto f_{1}m_{1}+\cdots+f_{r}m_{r} \right):\Ext_{\cF}^{1}({\mathbf{1}},M') \rightarrow M'/(\sigma-1)M'   \]
is an isomorphism of $\FF_{q}[t]$-modules.
\end{theorem}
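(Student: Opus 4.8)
The plan is to exhibit $\mu$ as a group homomorphism, check $\FF_q[t]$-linearity against the module structure described earlier (Baer sum and pushout), and then construct an explicit inverse. First I would verify that $\mu$ is well-defined on isomorphism classes: if $M$ is defined by $(f_1,\dots,f_r)$ and $\widetilde M$ by $(\widetilde f_1,\dots,\widetilde f_r)$ with respect to possibly different bases compatible with the same $\Phi'$ on $M'$, an isomorphism $M\to\widetilde M$ fixing $M'$ and inducing the identity on $\mathbf 1$ is given by a matrix $\bigl(\begin{smallmatrix} I_r & 0\\ \delta_1,\dots,\delta_r & 1\end{smallmatrix}\bigr)$ with $\delta_i\in\ok[t]$, and the intertwining relation forces $\widetilde f_j - f_j$ to be exactly the $j$-th coordinate of $(\sigma-1)$ applied to $\delta_1 m_1+\cdots+\delta_r m_r$; hence $\sum \widetilde f_j m_j \equiv \sum f_j m_j$ in $M'/(\sigma-1)M'$. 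Conversely, two representatives differing by a coboundary give isomorphic extensions, so $\mu$ descends to a bijection on the level of the underlying sets once we know every class in $M'/(\sigma-1)M'$ is hit, which is clear since the formula $(f_1,\dots,f_r)\mapsto \sum f_j m_j$ is literally surjective onto $M'$ and then onto the quotient.

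Next I would check compatibility with the $\FF_q[t]$-module structures. For the additive structure, the Baer sum $M_1 +_B M_2$ is defined by the matrix with bottom row $\bv_1+\bv_2$ (as recalled just before the statement), so $\mu(M_1+_BM_2) = \sum (v_{1,j}+v_{2,j})m_j = \mu(M_1)+\mu(M_2)$, using that $M'/(\sigma-1)M'$ is an $\FF_q[t]$-module in the obvious way (note $\sigma$ is $\FF_q[t]$-linear since it commutes with multiplication by elements of $\FF_q[t]\subset\ok[t]$, so $(\sigma-1)M'$ is an $\FF_q[t]$-submodule and the quotient makes sense). For the scalar action, the pushout $a*M_1$ has bottom row $a\bv_1$, so $\mu(a*M_1)=\sum a v_{1,j} m_j = a\cdot\mu(M_1)$. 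The zero class corresponds to the split extension $(0,\dots,0)$, i.e.\ to $0\in M'/(\sigma-1)M'$. Thus $\mu$ is a homomorphism of $\FF_q[t]$-modules.

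Finally, for the inverse I would write down the map $M'/(\sigma-1)M' \to \Ext^1_{\cF}(\mathbf 1, M')$ sending the class of $\sum f_j m_j$ to the extension defined by the matrix $\bigl(\begin{smallmatrix}\Phi' & 0\\ f_1,\dots,f_r & 1\end{smallmatrix}\bigr)$; the well-definedness computation of the first paragraph shows this does not depend on the choice of representative, and it is manifestly a two-sided inverse to $\mu$. The only point requiring genuine care — and the step I expect to be the main obstacle — is confirming that the extension class so produced really lives in $\cF$, i.e.\ that the big matrix defines a Frobenius module (free of finite rank over $\ok[t]$ with the stated $\sigma$-action): this is immediate here since the matrix is block lower-triangular with $\Phi'$ and $1$ on the diagonal, hence invertible over $\ok(t)$, and has entries in $\ok[t]$. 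The remaining routine checks are the cocycle bookkeeping identifying $\Ext^1_{\cF}(\mathbf 1,M')$ with the Yoneda/cohomological group $M'/(\sigma-1)M'$, which is the standard presentation of $\Ext^1$ for modules over the skew polynomial structure and is exactly the content of the constructions attributed to Anderson and appearing in \cite{HP04, Taelman10}.
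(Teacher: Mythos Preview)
Your proposal is correct and follows essentially the same approach as the paper: the core computation identifying triviality of an extension with the row vector $(f_1,\dots,f_r)$ lying in the image of $(\sigma-1)$ is exactly what the paper does, and your explicit checks of Baer-sum and pushout compatibility, together with the inverse construction, simply flesh out what the paper dismisses with ``It is clear that $\mu$ is $\FF_q[t]$-linear and also surjective.'' There is no substantive difference in strategy.
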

\begin{proof} We first show that the map $\mu$ is well-defined. Suppose that $M$ is trivial in $\Ext_{\cF}^{1}({\mathbf{1}},M')$. Equivalently, there exists
$u_{1},\ldots,u_{r}\in \ok[t]$ so that
\[ \begin{pmatrix}
I_{r}\\
u_{1},\ldots,u_{r}&1\\
\end{pmatrix}^{(-1)}  \begin{pmatrix}
\Phi'& \\
f_{1},\ldots,f_{r}&1\\
\end{pmatrix}= \begin{pmatrix}
\Phi'& \\
 &1\\
\end{pmatrix} \begin{pmatrix}
I_{r}\\
u_{1},\ldots,u_{r}&1\\
\end{pmatrix}
.\] The equation above is equivalent to
\[ (f_{1},\ldots,f_{r})(m_{1},\ldots,m_{r})^{{\tr}}=\left( (u_{1},\ldots,u_{r})-(u_{1}^{(-1)},\ldots,u_{r}^{(-1)}) \Phi' \right) (m_{1},\ldots,m_{r})^{{\tr}},  \]
which is equivalent to
\[ (f_{1},\ldots,f_{r})(m_{1},\ldots,m_{r})^{{\tr}}=(\sigma-1) \left(  (-u_{1},\ldots,-u_{r})(m_{1},\ldots,m_{r})^{{\tr}}   \right).     \]
So we have shown that $\mu$ is well-defined and also that $\mu$ is one to one.
It is clear that $\mu$ is $\FF_{q}[t]$-linear and also surjective.
\end{proof}

Now let us consider the $n$-th tensor power of the Carlitz module. The Frobenius module associated to $\bC^{\otimes n}$ is the $n$-th tensor power of the Carlitz motive denoted by $C^{\otimes n} := \ok[t]$, on which $\sigma$ acts by
\[
   \sigma(f) = (t-\theta)^n f^{(-1)}, \quad f \in C^{\otimes n}.
\]
As a $\ok[\sigma]$-module, $C^{\otimes n}$ is free of rank $n$ with basis $\left\{(t-\theta)^{n-1}, \ldots, t-\theta, 1\right\}$. From this observation, it is not hard to check that the Frobenius module $M'$ fixed as above is a free left $\ok[\sigma]$-module of rank $d:=(s_{1}+\cdots+s_{r})+(s_{2}+\cdots+s_{r})+\cdots+s_{r}$, and
 \[\left\{(t-\theta)^{s_{1}+\cdots+s_{r}-1}m_{1},\cdots,(t-\theta)m_{1},m_{1},\ldots,(t-\theta)^{s_{r}-1}m_{r},\ldots,(t-\theta)m_{r},m_{r} \right\}\]
 is a $\ok[\sigma]$-basis of $M'$. We further observe that $(t-\theta)^{N}M'/\sigma M'=(0)$ for $N\gg 0$ and hence $M'$ is an Anderson $t$-motive in the sense of \cite{P08}, which is called a dual $t$-motive in \cite{ABP04}.

For such $M'$, we can identity $M'/(\sigma-1)M'$ with the direct sum of $d$ copies of $\ok$ as follows.  Fixing a $\ok[\sigma]$-basis $\nu_1, \dots, \nu_d$ of $M'$ given as above, we can express any $m \in M'$ as
\[
  m = \sum_{i=1}^d u_i \nu_i, \quad u_i \in \ok[\sigma],
\]
and then we define $\Delta : M' \to \Mat_{d\times 1}(\ok)$ by
\begin{equation}\label{E:Delta}
  \Delta(m) := \begin{pmatrix}
  \delta(u_1) \\ \vdots \\ \delta(u_d)
  \end{pmatrix},
\end{equation}
where
\[
  \delta \biggl( \sum_{i}c_{i}\sigma^{i}= \sum_{i}  \sigma^i c_{i}^{q^{i}} \biggr) = \sum_{i} c_i^{q^i}.
\]
It follows that $\Delta$ is a morphism of $\FF_q$-vector spaces with kernel $(\sigma-1)M'$.  We note that if $(a_1, \dots, a_d)^{\tr} \in \Mat_{d \times 1}(\ok)$, then there is a natural lift to $M'$, since
\[
  \Delta(a_1 \nu_1 + \cdots +a_d \nu_d) = \begin{pmatrix} a_1 \\ \vdots \\ a_d \end{pmatrix}.
\]
As $t(\sigma -1)M' \subseteq (\sigma-1)M'$, the map $\Delta$ induces an $\FF_q[t]$-module structure on $\Mat_{d\times 1}(\ok)$. We denote by $(E',\rho)$ the $t$-module defined over $\ok$ with $E'(\ok)$ identified with $\Mat_{d\times 1}(\ok)$, on which the $\FF_{q}[t]$-module structure is given by
\[ \rho: \FF_{q}[t]\rightarrow \Mat_d(\ok[\tau]) \]
so that
\[  \Delta(t(a_1 \nu_1 + \cdots +a_d \nu_d))=\rho_t  \begin{pmatrix} a_1 \\ \vdots \\ a_d \end{pmatrix}.  \]

For example, we consider $C^{\otimes n}$. As a $\ok[\sigma]$-module, $C^{\otimes n}$ is free of rank $n$ with basis $\left\{ (t-\theta)^{n-1}, \ldots, t-\theta, 1\right\}$.  We let
\[
\Delta_n : C^{\otimes n} \to \Mat_{n\times 1}(\ok)
\]
be defined as above with respect to this basis.  For $(a_1, \dots, a_n)^{\tr} \in \Mat_{n\times 1}(\ok)$, we let
\[
  f = a_1 (t-\theta)^{n-1} + \cdots + a_{n-1}(t-\theta) + a_n,
\]
so that $\Delta_n(f) = (a_1, \dots, a_n)^{\tr}$.  Now
\[
  tf = (\theta a_1 + a_2) (t-\theta)^{n-1} + \cdots (\theta a_{n-1} + a_{n})(t-\theta) + (a_1^q + \theta a_n) + (\sigma-1) \bigl(a_{1}^q \bigr),
\]
and thus multiplication by $t$ on $\Mat_{n \times 1}(\ok)$ is given by
\[
  t \cdot \begin{pmatrix} a_{1} \\ \vdots \\ a_n \end{pmatrix} = \Delta_n(tf) =
  [t]_{n} \begin{pmatrix} a_{1} \\ \vdots \\ a_n \end{pmatrix}.
\]
In this way we identify  $C^{\otimes n}/(\sigma-1)C^{\otimes n}$ and $\bC^{\otimes n}(\ok)$ as $\FF_q[t]$-modules.  This identification between an abelian $t$-module over $\ok$ and the quotient of its associated dual $t$-motive modulo $\sigma-1$ is due entirely to Anderson.  One sees it in \cite[\S 4]{ABP04} (see especially  the functor ``$f \pmod{\sigma -1}$'' in \S 4.1).  See also \cite[\S 4.6]{BP}, \cite{CP11}, \cite{CP12} for other instances of this phenomenon for Drinfeld modules.

To summarize,  we have the following result.

\begin{theorem}[Anderson] \label{T:t-module}
Let $M'$ be the Frobenius module defined by the matrix $\Phi'$ in \eqref{E:Phi s'}. Let $(E',\rho)$ be the $t$-module with $E'(\ok)$ identified with $\Mat_{d\times 1}(\ok)$, which is equipped with the $\FF_{q}[t]$-module structure
via $\rho:\FF_{q}[t]\rightarrow  \Mat_{d}(\ok[\tau])$ through the map $\Delta$ as above. Then we have the following isomorphism of $\FF_{q}[t]$-modules
\[
  M'/(\sigma-1)M'\cong E'(\ok),
\]
and in fact $E'$ is the $t$-module associated to the Anderson dual $t$-motive $M'$.
\end{theorem}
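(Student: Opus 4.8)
The plan is to assemble the ingredients already laid out in this subsection and then invoke Anderson's functor from abelian dual $t$-motives to abelian $t$-modules. Concretely, three things must be checked: that $M'$ is free of finite rank $d$ over $\ok[\sigma]$ and that the map $\Delta$ of \eqref{E:Delta} is an $\FF_q$-linear surjection with kernel exactly $(\sigma-1)M'$; that the $\FF_q[t]$-action on $M'/(\sigma-1)M'$ transported through $\Delta$ is realized by a matrix $\rho_t\in\Mat_d(\ok[\tau])$ whose $\tau^0$-component $\alpha_0$ satisfies $\alpha_0-\theta I_d$ nilpotent, so that $(E',\rho)$ really is a $t$-module over $\ok$; and that $(E',\rho)$ is precisely the $t$-module attached to $M'$ by the functor ``$(\cdot)\bmod(\sigma-1)$'' of \cite[\S4.1]{ABP04}. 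Throughout write $n_j:=s_j+\cdots+s_r$.

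First I would record the $\ok[\sigma]$-structure: $\Phi'$ is block lower-triangular with diagonal blocks $(t-\theta)^{n_j}$, each being the matrix of a Carlitz tensor power, so each block is $\ok[\sigma]$-free on $\{(t-\theta)^{n_j-1}m_j,\dots,(t-\theta)m_j,m_j\}$; since the sub-diagonal entries of $\Phi'$ have positive $\sigma$-degree, concatenating these sets gives the $\ok[\sigma]$-basis $\nu_1,\dots,\nu_d$ displayed above. For the kernel computation I would use that $\delta$ kills $(\sigma-1)\ok[\sigma]$ while every element of $\ker\delta$ telescopes back into $(\sigma-1)\ok[\sigma]$, giving $\ker\Delta=\bigoplus_i(\sigma-1)\ok[\sigma]\nu_i=(\sigma-1)M'$, with surjectivity immediate from the natural lift $\sum a_i\nu_i$. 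Because $t$ and $\sigma$ commute in $\ok[t,\sigma]$ we have $t(\sigma-1)m=(\sigma-1)(tm)$, so the $\FF_q[t]$-action descends; declaring the transported action on $\Mat_{d\times1}(\ok)$ to be $\rho$ then makes $\Delta$ the asserted isomorphism $M'/(\sigma-1)M'\cong E'(\ok)$ of $\FF_q[t]$-modules.

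Next I would compute multiplication by $t$ on the standard lift using $t\cdot(t-\theta)^k m_j=(t-\theta)^{k+1}m_j+\theta(t-\theta)^k m_j$ and, once $k+1$ attains the block size, the relation $(t-\theta)^{n_j}m_j=\sigma m_j-Q_{j-1}^{(-1)}(t-\theta)^{n_{j-1}}m_{j-1}$ to absorb the excess $\sigma$-degree. Since $\delta$ is $q$-semilinear, each $\sigma$ contributes a $\tau$ in the resulting matrix, so multiplication by $t$ is given by some $\rho_t\in\Mat_d(\ok[\tau])$, and extending $t\mapsto\rho_t$ multiplicatively produces the ring homomorphism $\rho\colon\FF_q[t]\to\Mat_d(\ok[\tau])$. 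The point is that every reduction of a ``$\sigma$-overflow'' term $(t-\theta)^{n_j}m_j$ produces first $\sigma m_j$ and then a term $Q_{j-1}^{(-1)}(t-\theta)^{n_{j-1}}m_{j-1}$ whose factor $(t-\theta)^{n_{j-1}}$ again forces a further $\sigma$-reduction; hence no cross-block term ever lands in the $\tau^0$-part, so $\alpha_0$ is block-diagonal with diagonal blocks $\theta I_{n_j}+N_{n_j}$ and $\alpha_0-\theta I_d=\bigoplus_j N_{n_j}$ is nilpotent. This confirms $(E',\rho)$ is an abelian $t$-module defined over $\ok$.

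Finally, since $M'$ is free of finite rank over both $\ok[t]$ and $\ok[\sigma]$ and satisfies $(t-\theta)^N M'/\sigma M'=(0)$ for $N\gg0$, it is an Anderson dual $t$-motive, so Anderson's functor sending such an $\cN$ to the $t$-module with $\ok$-points $\cN/(\sigma-1)\cN$ applies; unwinding its definition in \cite[\S4.1]{ABP04} shows it returns exactly the $(E',\rho)$ built from $\Delta$, which gives the last assertion of the theorem. I expect the one genuinely delicate step to be the nilpotency of $\alpha_0-\theta I_d$: one must rule out any hidden $\tau^0$-contribution crossing between blocks, and this is exactly where the precise lower-triangular shape of $\Phi'$ — and the fact that each sub-diagonal entry carries the full factor $(t-\theta)^{n_{j-1}}$ of block size — is used.
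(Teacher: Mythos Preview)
Your proposal is correct and follows the same approach as the paper: the paper does not supply a formal proof of this theorem but treats it as a summary of the construction laid out in the paragraphs preceding the statement (the $\ok[\sigma]$-basis of $M'$, the map $\Delta$ with kernel $(\sigma-1)M'$, the worked example $C^{\otimes n}$), together with an attribution to Anderson and the reference to \cite[\S4.1]{ABP04}. Your write-up supplies the details the paper leaves implicit, in particular the verification that cross-block terms contribute only to $\tau^{\geq 1}$ so that $\alpha_0-\theta I_d$ is nilpotent; one small quibble is the phrase ``sub-diagonal entries of $\Phi'$ have positive $\sigma$-degree'' (the entries of $\Phi'$ lie in $\ok[t]$), where what you really mean---and correctly use later---is that the cross-block part of $(t-\theta)^{n_j}m_j$ carries the full factor $(t-\theta)^{n_{j-1}}$ and hence reduces to $\sigma\cdot(\text{something})$.
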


\begin{remark}
Combining the two theorems above we have that $\Ext_{\cF}^{1}({\mathbf{1}},M')\cong E'(\ok)$ as $\FF_{q}[t]$-modules.  Examples of this type of isomorphism were also studied by Ramachandran and the second author~\cite{PR03} for extensions of tensor powers of the Carlitz module.  See also \cite[p.~529]{Sinha97}, \cite{Taelman10}. We further mention that in fact $M'$ is  a rigid analytically trivial Anderson $t$-motive as we have $\Psi'^{(-1)}=\Phi' \Psi'$ and so the corresponding $t$-module $E'$ is uniformizable.
\end{remark}

\subsection{Reformulation of the criteria via $t$-modules}

\begin{proposition}\label{P:PropI}
Let $n$ be a positive integer. Then for any nonzero polynomial $f\in A[t]$, we have  $\Delta_{n}(f)\in \bC^{\otimes n}(A)$. Equivalently, there exist $a_{1},\ldots,a_{n}\in A$ and $g\in C^{\otimes n}$ so that
\[ f= a_{1}(t-\theta)^{n-1}+\cdots+a_{n}+ (\sigma-1)g.  \]
\end{proposition}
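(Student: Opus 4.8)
The plan is to carry out a division algorithm directly inside the dual $t$-motive $C^{\otimes n}=\ok[t]$, on which $\sigma g=(t-\theta)^{n}g^{(-1)}$, keeping track that all coefficients which arise lie in $A$. Recall from the construction of $\Delta_{n}$ that $\ker\Delta_{n}=(\sigma-1)C^{\otimes n}$ and that $\Delta_{n}\bigl(\sum_{i=1}^{n}a_{i}(t-\theta)^{n-i}\bigr)=(a_{1},\dots,a_{n})^{\tr}$ for $a_{i}\in\ok$. Thus $\Delta_{n}(f)$ depends only on the class of $f$ modulo $(\sigma-1)C^{\otimes n}$, the ``equivalently'' reformulation is the substantive assertion, and for it it suffices to produce $g\in A[t]$ and $h\in A[t]$ with $\deg_{t}h<n$ and $f=h+(\sigma-1)g$. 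Indeed, the $A$-module of polynomials in $A[t]$ of $t$-degree $<n$ has both $\{t^{j}\}_{0\le j<n}$ and $\{(t-\theta)^{j}\}_{0\le j<n}$ as $A$-bases, the transition matrix being unitriangular with entries in $A$ (its $(j,\ell)$ entry, for $\ell\le j$, is $\binom{j}{\ell}(-\theta)^{j-\ell}$); so such an $h$ can be rewritten as $h=\sum_{i=1}^{n}a_{i}(t-\theta)^{n-i}$ with $a_{i}\in A$, whence $\Delta_{n}(f)=\Delta_{n}(h)=(a_{1},\dots,a_{n})^{\tr}\in\bC^{\otimes n}(A)$.

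To produce $g$ and $h$: if $\deg_{t}f<n$, take $h=f$ and $g=0$. Otherwise set $D:=\deg_{t}f\ge n$ and let $c\in A$ be the coefficient of $t^{D}$ in $f$. Put $g_{1}:=c^{q}\,t^{D-n}$, which lies in $A[t]$ because $A=\FF_{q}[\theta]$ is closed under $q$-th powers; moreover $(c^{q})^{(-1)}=c$ in $\ok$, so $(\sigma-1)g_{1}=(t-\theta)^{n}c\,t^{D-n}-c^{q}t^{D-n}\in A[t]$ has $t$-degree $D$ with leading coefficient $c$. Hence $f-(\sigma-1)g_{1}\in A[t]$ has $t$-degree $<D$. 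Iterating this step, with the $t$-degree strictly decreasing each time, we arrive after finitely many steps at $g:=g_{1}+g_{2}+\cdots\in A[t]\subseteq C^{\otimes n}$ with $h:=f-(\sigma-1)g\in A[t]$ of $t$-degree $<n$, as required.

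The point worth flagging — and the reason one cannot shortcut this by combining the identity $\Delta_{n}(tf)=[t]_{n}\Delta_{n}(f)$ with induction after reducing to $f\in\FF_{q}[t]$ — is that multiplication by $\theta$ on $C^{\otimes n}$ does \emph{not} descend to the quotient $C^{\otimes n}/(\sigma-1)C^{\otimes n}$; a quick computation gives $\theta(\sigma-1)h\equiv(\theta^{q}-\theta)h\pmod{(\sigma-1)C^{\otimes n}}$, which is generically nonzero. So the argument really has to live at the level of $C^{\otimes n}$, and the only genuine thing to check is the (routine) bookkeeping in the previous paragraph ensuring that every Frobenius twist and every coefficient generated in the division stays inside $A$, which it does because $\FF_{q}[\theta]$ is stable under $q$-th powers.
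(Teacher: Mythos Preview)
Your division-algorithm argument is correct: iteratively subtracting $(\sigma-1)(c^{q}t^{D-n})$ with $c\in A$ the leading coefficient of the running polynomial does drop the $t$-degree while staying in $A[t]$, and the unitriangular change of basis from $\{t^{j}\}$ to $\{(t-\theta)^{j}\}$ has entries in $A$. So the proof stands.

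The paper, however, takes a genuinely shorter route. It writes $f=\sum_{i}f_{i}\theta^{i}$ with $f_{i}\in\FF_{q}[t]$ (expansion in $\theta$, not in $t$), and then uses the $\FF_{q}[t]$-linearity of $\Delta_{n}$ to get
\[
\Delta_{n}(f)=\sum_{i}[f_{i}]_{n}\,\Delta_{n}(\theta^{i})=\sum_{i}[f_{i}]_{n}\begin{pmatrix}0\\ \vdots\\ 0\\ \theta^{i}\end{pmatrix}\in\bC^{\otimes n}(A),
\]
since each $\theta^{i}$ is constant in $t$ (so $\Delta_{n}(\theta^{i})=(0,\dots,0,\theta^{i})^{\tr}$ directly) and each $[f_{i}]_{n}\in\Mat_{n}(A[\tau])$ preserves $A^{n}$. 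Your final paragraph warns against a shortcut via $\theta$-multiplication on the quotient, and you are right that this does not descend; but the paper sidesteps the issue entirely by pushing the $\FF_{q}[t]$-action (which \emph{does} descend, by construction of $[t]_{n}$) onto the coefficients $f_{i}$ and reducing to the constants $\theta^{i}$ rather than to elements of $\FF_{q}[t]$. What your approach buys is an explicit $g\in A[t]$ witnessing the congruence, whereas the paper's argument is non-constructive at that level but two lines long.
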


\begin{proof}
We write $f=\sum f_{i}\theta^{i}$ with $f_{i}\in \FF_{q}[t]$. Then via the $\FF_{q}[t]$-linear map $\Delta_{n}$ we have
\[ \Delta_{n}(\sum_{i}f_{i}\theta^{i})=\sum_{i}[f_{i}]_{n}\Delta_{n}(\theta^{i})=\sum_{i}[f_{i}]_{n} \begin{pmatrix} 0\\ \vdots\\ \theta^{i}
\end{pmatrix} \in \bC^{\otimes n}(A). \]
\end{proof}

\begin{proposition}\label{P:PropII}
Let $M'$ be the Frobenius module defined by the matrix $\Phi'$ in \eqref{E:Phi s'} with a $\ok[t]$-basis $m_{1},\ldots,m_{r}$. Let $\left\{\nu_{1},\ldots,\nu_{d} \right\}$ be the $\ok[\sigma]$-basis of $M'$ given by
 \[\left\{(t-\theta)^{s_{1}+\cdots+s_{r}-1}m_{1},\cdots,(t-\theta)m_{1},m_{1},\ldots,(t-\theta)^{s_{r}-1}m_{r},\ldots,(t-\theta)m_{r},m_{r}\right\} .\] Let $\Xi$ be the set consisting of all elements in $M'$ of the form $\sum_{i=1}^{d}e_{i}\nu_{i}$, where $e_{j}=\sum_{n}\sigma^{n} u_{nj}$ with each $u_{nj}\in A$. Then for any nonzero $f\in A[t]$ and any $1\leq \ell\leq r$, we have that $fm_{\ell} \in \Xi$.
\end{proposition}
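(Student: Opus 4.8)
The plan is to reduce the statement to a few closure properties of $\Xi$ together with the $r=1$ case, which is essentially Proposition~\ref{P:PropI}. First I would record that $\Xi$ is an $\FF_{q}$-subspace of $M'$ containing the $A$-lattice $\Lambda:=\bigoplus_{i}A\,\nu_{i}$ (so in particular $m_{\ell}\in\Lambda\subseteq\Xi$), that $\Xi$ is stable under $\sigma$ (since $\sigma\cdot\sum_{n}\sigma^{n}u_{n}=\sum_{n}\sigma^{n+1}u_{n}$ is again of the special shape $\sum_{n}\sigma^{n}u_{n}$ with $u_{n}\in A$), and that $\Xi$ is stable under multiplication by elements of $A$ (since $c\,\sigma^{n}=\sigma^{n}c^{q^{n}}$ and $c^{q^{n}}u_{n}\in A$ for $c\in A$). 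Writing $w_{j}:=s_{j}+\cdots+s_{r}$, the $\sigma$-action on the basis reads $\sigma m_{\ell}=Q_{\ell-1}^{(-1)}(t-\theta)^{w_{\ell-1}}m_{\ell-1}+(t-\theta)^{w_{\ell}}m_{\ell}$ (the first summand absent when $\ell=1$), so with $R_{0}:=0$ and $R_{\ell-1}:=Q_{\ell-1}^{(-1)}(t-\theta)^{w_{\ell-1}}m_{\ell-1}$ one has the key identity $(t-\theta)^{w_{\ell}}m_{\ell}=\sigma m_{\ell}-R_{\ell-1}$. Iterating it, and using repeatedly that $\sigma(hx)=h^{(-1)}\sigma(x)$ for $h\in\ok[t]$ (so that $g\,\sigma(x)=\sigma(g^{(1)}x)$, where $(1)$ is the inverse twist, which preserves $A[t]$), gives
\[
  R_{b}=\sum_{i=1}^{b}(-1)^{b-i}\,\sigma\bigl((Q_{b}Q_{b-1}\cdots Q_{i})\,m_{i}\bigr),
\]
in which each coefficient $Q_{b}\cdots Q_{i}$ again lies in $A[t]$.

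The proof then goes by induction on the depth $r$. For $r=1$, $M'=C^{\otimes s_{1}}$ with $\sigma m_{1}=(t-\theta)^{s_{1}}m_{1}$ and $R_{0}=0$; I would prove $fm_{1}\in\Xi$ for $f\in A[t]$ by an inner induction on $\deg_{t}f$: divide $f=(t-\theta)^{s_{1}}g+r$ in $A[t]$ with $\deg_{t}r<s_{1}$, note $rm_{1}\in\Lambda$, and $g(t-\theta)^{s_{1}}m_{1}=g\,\sigma m_{1}=\sigma(g^{(1)}m_{1})\in\sigma\Xi\subseteq\Xi$ by the inner hypothesis, as $\deg_{t}g^{(1)}=\deg_{t}g<\deg_{t}f$; this is exactly the content of Proposition~\ref{P:PropI}. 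For $r\geq2$ and a fixed $\ell$, the $\sigma$-stable $\ok[t]$-submodule $M_{\ell-1}:=\ok[t]m_{1}\oplus\cdots\oplus\ok[t]m_{\ell-1}$ is again of the type under consideration, for the $(\ell-1)$-component tuple $(s_{1},\ldots,s_{\ell-2},\,s_{\ell-1}+\cdots+s_{r})$; its blocks coincide with the first $\ell-1$ blocks of $M'$, so $\Xi_{M_{\ell-1}}=\Xi\cap M_{\ell-1}$, and the proposition holds for $M_{\ell-1}$ by the outer induction. Feeding the displayed formula for $R_{\ell-1}$ — each summand being $\sigma$ of an $A[t]$-multiple of some $m_{i}$ with $i\leq\ell-1$ — into this yields $R_{\ell-1}\in\Xi_{M_{\ell-1}}\subseteq\Xi$, and likewise $gR_{\ell-1}=\sum_{i\leq\ell-1}(-1)^{\ell-1-i}\,\sigma\bigl(g^{(1)}(Q_{\ell-1}\cdots Q_{i})\,m_{i}\bigr)\in\Xi$ for every $g\in A[t]$. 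One then repeats the $\deg_{t}f$-induction: for $f\in A[t]$ write $f=(t-\theta)^{w_{\ell}}g+r$ with $\deg_{t}r<w_{\ell}$, so $rm_{\ell}\in\Lambda$ and
\[
  g(t-\theta)^{w_{\ell}}m_{\ell}=g\,\sigma m_{\ell}-gR_{\ell-1}=\sigma(g^{(1)}m_{\ell})-gR_{\ell-1},
\]
whose first term lies in $\Xi$ by the inner hypothesis and whose second lies in $\Xi$ by the preceding sentence; hence $fm_{\ell}\in\Xi$.

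The step I expect to be the crux is controlling the backward twists $Q_{j}^{(-1)}$ that occur in $\Phi'$: membership in $\Xi$ demands coefficients of the shape $\sum_{n}\sigma^{n}u_{n}$ with $u_{n}\in A$, whereas $Q_{j}^{(-1)}$ has coefficients that are $q$-th roots of elements of $A$ and so is not itself ``rational''. The resolution is that $Q_{j}^{(-1)}$ never appears bare: it always sits to the left of a $\sigma$, and $Q_{j}^{(-1)}\sigma(x)=\sigma(Q_{j}x)$ with $Q_{j}\in A[t]$; this ``healing'' under $\sigma$ is precisely what makes the iterated identity for $R_{b}$ carry $A[t]$-coefficients, and it is what forces $R_{b}$, and then every $fm_{\ell}$, back into $\Xi$. (Here one uses that the entries of $\fQ$ lie in $A[t]$ — as for the Anderson--Thakur polynomials $H_{s_{i}-1}$, and indeed in all of our applications.)
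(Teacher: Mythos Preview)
Your proof is correct and follows essentially the same approach as the paper: a double induction (outer on $\ell$ via the submodule $M_{\ell-1}$, inner on $\deg_{t}f$), using the division-with-remainder by $(t-\theta)^{w_{\ell}}$ and the iterated identity expressing $(t-\theta)^{w_{\ell}}m_{\ell}$ as $\sigma m_{\ell}$ minus a telescoping sum $\sum_{i<\ell}(-1)^{\ell-i}\sigma((Q_{\ell-1}\cdots Q_{i})m_{i})$. The only organizational difference is cosmetic: the paper frames the outer induction directly on $\ell$ within the fixed module $M'$, whereas you phrase it as an induction on the depth $r$ and invoke the inductive hypothesis on the submodule $M_{\ell-1}$ (correctly identified as a $\Phi'$-type module for the tuple $(s_{1},\ldots,s_{\ell-2},s_{\ell-1}+\cdots+s_{r})$, whose $\ok[\sigma]$-basis blocks agree with the first $\ell-1$ blocks of $M'$, so $\Xi_{M_{\ell-1}}=\Xi\cap M_{\ell-1}$). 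Your upfront isolation of the closure properties of $\Xi$ and your remark that the potentially irrational $Q_{j}^{(-1)}$ always appears only as $Q_{j}^{(-1)}\sigma=\sigma Q_{j}$ are welcome clarifications of points the paper leaves implicit.
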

\begin{proof} We first prove the case when $\ell=1$. We divide $f$ by $(t-\theta)^{s_{1}+\cdots+s_{r}}$ and write \[f=g_{1}(t-\theta)^{s_{1}+\cdots+s_{r}}+\gamma_{1},\] where $g_{1},\gamma_{1}\in A[t]$ with $\deg_{t}\gamma_{1} < s_{1}+\cdots+s_{r}$. So $fm_{1}=g_{1}\sigma m_{1}+\gamma_{1} m_{1}=\sigma g_{1}^{(1)} m_{1}+\gamma_{1} m_{1}$. Note that by expanding $\gamma$ in terms of powers of $(t-\theta)$ we see that $\gamma_{1} m_{1}$ is an $A$-linear combination of $\left\{\nu_{1},\ldots,\nu_{s_{1}+\cdots+s_{r}} \right\}$.

Next we divide $g_{1}^{(1)}\in A[t]$ by $(t-\theta)^{s_{1}+\cdots+s_{r}}$ and write \[g_{1}^{(1)}=g_{2}(t-\theta)^{s_{1}+\cdots+s_{r}}+\gamma_{2},\] where $g_{2},\gamma_{2}\in A[t]$ with $\deg_{t}\gamma_{2} < s_{1}+\cdots+s_{r}$. So \[\sigma g_{1}^{(1)} m_{1}=\sigma\left(g_{2}(t-\theta)^{s_{1}+\cdots+s_{r}}+\gamma_{2}\right) m_{1}=\sigma^{2}g_{2}^{(1)}m_{1}+\sigma\gamma_{2}m_{2}.\]
By expanding $\gamma_{2}$ in terms of powers of $(t-\theta)$ we see that $\sigma\gamma_{2}m_{2}\in \Xi$. By dividing $g_{2}^{(1)}$ by $(t-\theta)^{s_{1}+\cdots+s_{r}}$ and continuing the procedure as above inductively we eventually obtain that $f m_{1}\in\Xi$.

Now for $\ell \geq 2$ we suppose that multiplication by any element of $A[t]$ on $m_i$ belongs to $\Xi$ for $1 \leq i \leq \ell-1$. We prove that $fm_{\ell} \in \Xi$ by the induction on the degree of $f$ in $t$, and note that the result is valid when $\deg_{t}f \leq s_{\ell}+\cdots+s_{r}-1$ by expanding $f$ in terms of powers of $(t-\theta)$. So we suppose that $\deg_{t}f\geq s_{\ell}+\cdots+s_{r}$.

We divide $f$ by $(t-\theta)^{s_{\ell}+\cdots+s_{r}}$ and write  \[f=g_{1}(t-\theta)^{s_{\ell}+\cdots+s_{r}}+\gamma_{1},\] where $g_{1},\gamma_{1}\in A[t]$ with $\deg_{t}\gamma_{1} < s_{\ell}+\cdots+s_{r}$. It follows that
\begin{align*}
fm_{\ell}&=g_{1}(t-\theta)^{s_{\ell}+\cdots+s_{r}}m_{\ell}+\gamma_{1} m_{\ell}\\
  &=g_{1}\left\{ \sigma m_{\ell} -H_{s_{\ell-1}-1}^{(-1)}(t-\theta)^{s_{\ell-1}+\cdots+s_{r}}m_{\ell-1} \right\}+\gamma_{1}m_{\ell} \\
  &=\begin{aligned}[t]
  g_{1}\Bigl\{ \sigma m_{\ell}& {}- \Bigl\{\sigma H_{s_{\ell-1}-1} m_{\ell-1} \\
  &{}- H_{s_{\ell-1}-1}^{(-1)}  H_{s_{\ell-2}-1}^{(-1)}(t-\theta)^{s_{\ell-2}+\cdots+s_{r}} m_{\ell-2} \Bigr\}  \Bigr\}+\gamma_{1}m_{\ell}
  \end{aligned} \\
  &\;\;\vdots\\
  &= g_{1}\left\{\sigma m_{\ell}+\sum_{i=1}^{\ell-1} (-1)^{i}\sigma \beta_{1}\cdots\beta_{i}m_{\ell-i}   \right\}+\gamma_{1}m_{\ell},
\end{align*}
where $\beta_{i}:=H_{s_{\ell-i}-1}\in A[t]$ for $i=1,\ldots,\ell-1$. It follows that
\[
fm_{\ell}=\sigma g^{(1)} m_{\ell}+ \sum_{i=1}^{\ell-1} (-1)^{i}\sigma g^{(1)} \beta_{1}\cdots\beta_{i}m_{\ell-i} +\gamma_{1}m_{\ell}.
\]
However, by expanding $\gamma_{1}$ in terms of powers of $(t-\theta)$ we see that $\gamma_{1}m_{\ell}\in \Xi$, and by hypothesis $\sum_{i=1}^{\ell-1} (-1)^{i}\sigma g^{(1)} \beta_{1}\cdots\beta_{i}m_{\ell-i}\in \Xi$. Thus, to prove the desired result we are reduced to proving that $g^{(1)} m_{\ell}\in A[t]$, which is valid by the induction hypothesis since $\deg_{t}g^{(1)}=\deg_{t}g < \deg_{t}f$.
\end{proof}

\begin{remark}\label{Rem:DeltaXi}
By \eqref{E:Delta} we see that $\Delta(\Xi)\subseteq E'(A)$.
\end{remark}

Now we put $\fQ=(H_{s_{1}-1},\ldots,H_{s_{r}-1})$, where $H_i$ are the Anderson-Thakur polynomials (see~\S\ref{sec:AT polynomials}).  We let $\bv_{\fs}\in E'(\ok)$ be image of $M$ under the composition of isomorphisms
\[
    \Ext_{\cF}^{1}({\mathbf{1}},M')   \cong  M'/(\sigma-1)M' \cong   E'(\ok) .
    \]
Precisely,
\[ \bv_{\fs}:=\Delta\left(H_{s_{r}-1}^{(-1)}(t-\theta)^{s_{r}}m_{r} \right) .\]
\begin{theorem}\label{T:Defined over A}
For each $r$-tuple $\fs=(s_{1},\ldots,s_{r})\in \NN^{r}$, we have that
\begin{enumerate}
\item[(a)] The associated $t$-module $E'$ given above is defined over $A$;
\item[(b)] The point $\bv_{\fs}$ is an integral point in $E'(A)$.
\end{enumerate}
\end{theorem}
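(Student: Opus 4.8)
The plan is to read off both statements from Proposition~\ref{P:PropII}, the explicit shape of the $\ok[\sigma]$-basis $\nu_{1},\dots,\nu_{d}$ of $M'$, and the basic properties of the map $\Delta\colon M'\to\Mat_{d\times 1}(\ok)$ of \eqref{E:Delta}. I will use repeatedly that $\Delta$ is $\FF_{q}$-linear with kernel $(\sigma-1)M'$, that $\Delta(\Xi)\subseteq E'(A)$ by Remark~\ref{Rem:DeltaXi}, that $\Xi$ is stable under left multiplication by $\sigma$ (if $e=\sum_{n}\sigma^{n}u_{n}$ with $u_{n}\in A$ then $\sigma e=\sum_{n}\sigma^{n+1}u_{n}$ is again of this form), and the twist identities $f^{(-1)}\sigma=\sigma f$ and $a\sigma^{n}=\sigma^{n}a^{q^{n}}$ in $\ok[t,\sigma]$. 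The first of these is the device that turns the $q$-th roots occurring in the off-diagonal entries of $\Phi'$ back into honest elements of $A[t]$.

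For part (a) I would compute the homomorphism $\rho_{t}$ on $E'(\ok)=\Mat_{d\times 1}(\ok)$ directly. Every basis vector $\nu_{i}$ of $M'$ over $\ok[\sigma]$ has the form $(t-\theta)^{k}m_{\ell}$ with $t(t-\theta)^{k}\in A[t]$, so Proposition~\ref{P:PropII} gives $t\nu_{i}\in\Xi$, say $t\nu_{i}=\sum_{j=1}^{d}\bigl(\sum_{n}\sigma^{n}u_{nij}\bigr)\nu_{j}$ with all $u_{nij}\in A$. Lifting $\ba=(a_{1},\dots,a_{d})^{\tr}\in E'(\ok)$ to $\sum_{i}a_{i}\nu_{i}\in M'$, moving the scalars $a_{i}$ past the $\sigma^{n}$ via $a_{i}\sigma^{n}=\sigma^{n}a_{i}^{q^{n}}$, and then applying $\Delta$, one finds that the $j$-th coordinate of $\rho_{t}(\ba)$ is $\sum_{n}\sum_{i}u_{nij}a_{i}^{q^{n}}$; hence $\rho_{t}=\sum_{n}U_{n}\tau^{n}$ with $(U_{n})_{ji}=u_{nij}\in A$, i.e.\ $\rho_{t}\in\Mat_{d}(A[\tau])$. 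This shows $E'$ is defined over $A$.

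For part (b), since $\bv_{\fs}=\Delta\bigl(H_{s_{r}-1}^{(-1)}(t-\theta)^{s_{r}}m_{r}\bigr)$ and $\Delta(\Xi)\subseteq E'(A)$, it suffices to show that $w_{r}:=H_{s_{r}-1}^{(-1)}(t-\theta)^{s_{r}}m_{r}$ lies in $\Xi$ (Proposition~\ref{P:PropII} cannot be applied directly because $H_{s_{r}-1}^{(-1)}\notin A[t]$). I would peel off the indices $r,r-1,\dots,1$ using the successive rows of $\Phi'$ (recall $\fQ=(H_{s_{1}-1},\dots,H_{s_{r}-1})$). The $r$-th row gives $(t-\theta)^{s_{r}}m_{r}=\sigma m_{r}-H_{s_{r-1}-1}^{(-1)}(t-\theta)^{s_{r-1}+s_{r}}m_{r-1}$, so
\begin{align*}
w_{r}&=H_{s_{r}-1}^{(-1)}\sigma m_{r}-\bigl(H_{s_{r}-1}H_{s_{r-1}-1}\bigr)^{(-1)}(t-\theta)^{s_{r-1}+s_{r}}m_{r-1}\\
&=\sigma\bigl(H_{s_{r}-1}m_{r}\bigr)-\bigl(H_{s_{r}-1}H_{s_{r-1}-1}\bigr)^{(-1)}(t-\theta)^{s_{r-1}+s_{r}}m_{r-1},
\end{align*}
and the first summand lies in $\sigma\Xi\subseteq\Xi$ by Proposition~\ref{P:PropII} since $H_{s_{r}-1}\in A[t]$. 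Applying the same move to the carry term with the $(r-1)$-st row of $\Phi'$ and iterating, at the $\ell$-th stage one splits a carry $\bigl(\prod_{i=\ell}^{r}H_{s_{i}-1}\bigr)^{(-1)}(t-\theta)^{s_{\ell}+\cdots+s_{r}}m_{\ell}$ into $\sigma\bigl(\prod_{i=\ell}^{r}H_{s_{i}-1}\cdot m_{\ell}\bigr)\in\Xi$ plus a new carry at index $\ell-1$; at the last step one uses the first row, $(t-\theta)^{s_{1}+\cdots+s_{r}}m_{1}=\sigma m_{1}$. Collecting, $w_{r}$ is a sum (up to signs) of the elements $\sigma\bigl(\prod_{i=\ell}^{r}H_{s_{i}-1}\cdot m_{\ell}\bigr)\in\Xi$ for $\ell=1,\dots,r$, hence $w_{r}\in\Xi$ and $\bv_{\fs}=\Delta(w_{r})\in E'(A)$. (For $r=1$ this recovers the Anderson--Thakur fact that $Z_{s_{1}}\in\bC^{\otimes s_{1}}(A)$, cf.\ Proposition~\ref{P:PropI}.)

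I expect the only genuine subtlety to be the Frobenius-twist bookkeeping: the off-diagonal entries $H_{s_{\ell-1}-1}^{(-1)}(t-\theta)^{\cdots}$ of $\Phi'$ a priori carry $q$-th roots of elements of $A$, so $E'$ and $\bv_{\fs}$ are only obviously defined over $\ok$; one has to absorb each such twist into a $\sigma$ via $f^{(-1)}\sigma=\sigma f$ in order to return to $A[t]$, where Proposition~\ref{P:PropII} and Remark~\ref{Rem:DeltaXi} deliver rationality over $A$. Everything else is routine.
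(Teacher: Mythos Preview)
Your proposal is correct and follows essentially the same route as the paper's own proof: for (a) you reduce the computation of $\rho_t$ to showing $t\nu_i\in\Xi$ for every $\ok[\sigma]$-basis vector, invoke Proposition~\ref{P:PropII}, and then unwind $\Delta$ to read off $\rho_t\in\Mat_d(A[\tau])$; for (b) you peel the twist $H_{s_r-1}^{(-1)}$ through the successive rows of $\Phi'$ via $f^{(-1)}\sigma=\sigma f$ and land in $\Xi$, exactly as the paper does. The only cosmetic difference is that in (a) the paper separates the easy case $\nu_i\notin\mathcal{S}$ (where $t\nu_i=\nu_{i-1}+\theta\nu_i$ directly) from the case $\nu_i\in\mathcal{S}$ before appealing to Proposition~\ref{P:PropII}, whereas you apply the proposition uniformly; your version is marginally cleaner.
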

\begin{proof} (a). Recall that $M'$ is the Frobenius module defined by $\Phi'$ as in  (\ref{E:Phi s'}) with $\ok[t]$-basis $m_{1},\ldots,m_{r}$. Put $d=(s_{1}+\cdots+s_{r})+\cdots+s_{r}$ and let $\left\{\nu_{1},\ldots,\nu_{d} \right\}$ be the $\ok[\sigma]$-basis of $M'$ given by \[\left\{(t-\theta)^{s_{1}+\cdots+s_{r}-1}m_{1},\cdots,(t-\theta)m_{1},m_{1},\ldots,(t-\theta)^{s_{r}-1}m_{r},\ldots,(t-\theta)m_{r},m_{r}\right\} .\] We identify $M'/(\sigma-1)M'$ with $\Mat_{d\times 1}(\ok)$ via the map $\Delta$ with respect to $\nu_{1},\ldots,\nu_{d}$.

Given any point $(a_{1},\ldots,a_{d})^{\tr}\in E'(\ok)$, its corresponding element in $M'/(\sigma-1)M'$ has a representative of the form $a_{1}\nu_{1}+\cdots+a_{d}\nu_{d}$. We claim that the element
\[ t\biggl(\sum_{i=1}^{d}a_{i}\nu_{i} \biggr) \]
 can be expressed as $\sum_{i=1}^{d}b_{i}\nu_{i}\in \Xi$ for which each $b_i$ is a of the form $b_{i}=\sum_{j}\sigma^{j}c_{j}$ so that $c_{j}$ is an $A$-linear combination of $q^{(\cdot)}$-th powers of the $a_{n}'s$. Then via the map $\Delta$, the claim implies that the $t$-module $E'$ is defined over $A$.

We observe that if some \[\nu_{i}\notin \mathcal{S}:=\left\{ (t-\theta)^{s_{1}+\cdots+s_{r}-1}m_{1},\ldots,(t-\theta)^{s_{r-1}+s_{r}-1}m_{r-1},(t-\theta)^{s_{r}-1}m_{r} \right\},\]
then \[ ta_{i}\nu_{i}=a_{i}(t-\theta)\nu_{i}+\theta a_{i}\nu_{i}=a_{i}\nu_{i-1}+\theta a_{i}\nu_{i}.\] Therefore we reduce the claim to the case of $\nu_{i}\in \mathcal{S}$.
To simplify the notation, we denote
\[ \nu_{i_1}=(t-\theta)^{s_{r}-1}m_{r},\quad\ldots, \quad\nu_{i_r}=(t-\theta)^{s_{1}+\cdots+s_{r}-1}m_{1}  .\]
Now given any $1\leq \ell \leq r$ we consider $t a_{i_{\ell}} \nu_{i_{\ell}}= a_{i_{\ell}} t (t-\theta)^{s_{1}+\cdots+s_{r}-1}m_{\ell}$. Applying Proposition~\ref{P:PropII} to $t (t-\theta)^{s_{1}+\cdots+s_{r}-1}m_{\ell}$ we see that $t a_{i_{\ell}} \nu_{i_{\ell}}$ can be written as the form
\[ a_{i_{\ell}}\sum_{j=1}^{d} \left(\sum_{e_{j}}\sigma^{e_{j}} b_{e_{j}}\right)\nu_{j}=\sum_{j=1}^{d}\left( \sum_{e_{j}} \sigma^{e_{j}} a_{i_{\ell}}^{q^{e_{j}}}b_{e_{j}}\right) \nu_{j}   \]
for some $b_{e_{j}}\in A$, whence the desired result follows.

(b).
Note that
\begin{align*}
 H_{s_{r}-1}^{(-1)}(t-\theta)^{s_{r}}m_{r}&=\sigma H_{s_{r}-1}m_{r}-H_{s_{r}-1}^{(-1)}H_{s_{r-1}-1}^{(-1)}(t-\theta)^{s_{r-1}+s_{r}} m_{r-1} \\
 &\;\;\vdots\\
 &= H_{s_{r}-1}^{(-1)}\left\{\sigma m_{r}+\sum_{i=1}^{r-1} (-1)^{i}\sigma \beta_{1}\cdots\beta_{i}m_{r-i}   \right\}\\
 &=\sigma H_{s_{r}-1} m_{r}+ \sum_{i=1}^{r-1} (-1)^{i}\sigma H_{s_{r}-1} \beta_{1}\cdots\beta_{i}m_{r-i},
\end{align*}
where $\beta_{i}:=H_{s_{r-i}-1}\in A[t]$ for $i=1,\ldots,r-1$. Applying Proposition~\ref{P:PropII} to the right-hand side of the equation above we see that \[H_{s_{r}-1}^{(-1)}(t-\theta)^{s_{r}}m_{r}\in \Xi.\] Since $\bv_{\fs}=\Delta\left(H_{s_{r}-1}^{(-1)}(t-\theta)^{s_{r}}m_{r}\right)$, the result follows from Remark~\ref{Rem:DeltaXi}.
\end{proof}

It follows that combining Theorems~\ref{T:MainThm},~\ref{T:Zeta-like},~\ref{T:Ext1}, and~\ref{T:t-module} we have the following criteria.
\begin{theorem}\label{T:MainThm2} For any $\fs=(s_{1},\ldots,s_{r})\in \NN^{r}$, we have the following equivalence.
\begin{enumerate}
\item
$\zeta_{A}(\fs)$ is Eulerian.
\item $\bv_{\fs}$ is an $\FF_{q}[t]$-torsion point in the $t$-module $E'(A)$.
\end{enumerate}
\end{theorem}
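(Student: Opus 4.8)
The plan is to deduce Theorem~\ref{T:MainThm2} by assembling the $\FF_q[t]$-module identifications already in place, transporting the torsion criterion of Theorem~\ref{T:MainThm} from the $\Ext^1$-module to the $t$-module $E'(A)$; no new analytic input is needed.

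First I would dispose of the case $r=1$, which is not covered by Theorems~\ref{T:ThmGeneral} and \ref{T:MainThm} (those assume $r\geq 2$). When $\fs=(s)$, unwinding the definition $\bv_\fs=\Delta_s\!\left(H_{s-1}^{(-1)}(t-\theta)^s m_1\right)$ and using that $\sigma$ acts as the identity on the coinvariants $C^{\otimes s}/(\sigma-1)C^{\otimes s}$ (so the leading $\sigma$-twist is absorbed), one recovers exactly the Anderson--Thakur special point $Z_s\in\bC^{\otimes s}(A)$ of Definition~\ref{Def:special points}, as already noted in the introduction. The equivalence then becomes the classical chain recalled there: $\zeta_A(s)$ is Eulerian iff $Z_s$ is $\FF_q[t]$-torsion iff $s$ is even, by the results of Anderson--Thakur and Yu (\cite{AT90}, \cite{Yu91}; cf.\ Remark~\ref{Rem:torsion Zn}).

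For $r\geq 2$, take $\fQ=(H_{s_1-1},\dots,H_{s_r-1})$, so that, as recorded in \eqref{E:Lr+1to1}, $\cL_{r+1,1}(\theta)=\cL_{\fs,\fQ}(\theta)$ equals $\zeta_A(\fs)/\tilde{\pi}^{w}$ up to a nonzero factor in $k$, and the non-vanishing hypotheses of Theorem~\ref{T:ThmGeneral} hold by Thakur's theorem. Theorem~\ref{T:MainThm} says $\zeta_A(\fs)$ is Eulerian iff the class of $M$ is a torsion element of the $\FF_q[t]$-module $\Ext^1_\cF(\mathbf{1},M')$. Now compose the $\FF_q[t]$-module isomorphism $\mu\colon\Ext^1_\cF(\mathbf{1},M')\iso M'/(\sigma-1)M'$ of Theorem~\ref{T:Ext1} with the $\FF_q[t]$-module isomorphism $M'/(\sigma-1)M'\iso E'(\ok)$ of Theorem~\ref{T:t-module} (induced by $\Delta$ as in \eqref{E:Delta}); reading off $\mu([M])=H_{s_r-1}^{(-1)}(t-\theta)^{s_r}m_r$ from the block form \eqref{E:Phi s} of $\Phi$ shows this composite carries the class of $M$ to $\bv_\fs$. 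Since an isomorphism of $\FF_q[t]$-modules preserves and reflects torsion, the class of $M$ is torsion iff $\bv_\fs$ is an $\FF_q[t]$-torsion point of $E'(\ok)$.

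Finally, by Theorem~\ref{T:Defined over A} the $t$-module $E'$ is defined over $A$ and $\bv_\fs\in E'(A)$; since $E'(A)\subseteq E'(\ok)$ with the $\FF_q[t]$-action restricting, $\bv_\fs$ is $\FF_q[t]$-torsion in $E'(\ok)$ iff it is $\FF_q[t]$-torsion in $E'(A)$. Chaining the three steps gives the equivalence. The substantive work lives elsewhere --- the ABP criterion and Proposition~\ref{P:CommonDen} behind Theorem~\ref{T:ThmGeneral}, and Proposition~\ref{P:PropII} behind the integrality $\bv_\fs\in E'(A)$ --- so the only points requiring care here are the bookkeeping ones: the identification $\bv_s=Z_s$ when $r=1$, and the verification that the composite of identifications sends $[M]$ to the explicitly defined $\bv_\fs$.
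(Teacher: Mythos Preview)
Your proof is correct and follows essentially the same route as the paper, which deduces Theorem~\ref{T:MainThm2} simply by combining Theorems~\ref{T:MainThm}, \ref{T:Ext1}, and~\ref{T:t-module}. You are more careful than the paper on one point: the paper's proof of Theorem~\ref{T:MainThm} invokes Corollary~\ref{Cor:CorThmGeneral}, hence Theorem~\ref{T:ThmGeneral}, which is stated only for $r\geq 2$; you rightly treat $r=1$ separately via the identification $\bv_s=Z_s$ and the Anderson--Thakur/Yu results, exactly as the paper itself does later in the proof of Theorem~\ref{T:EffectiveCriterion}.
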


Finally Theorem~\ref{T:Zeta-like} can now be transformed into the following  concrete form:
\begin{theorem}\label{T:CriterionZeta-like}
Given $\fs=(s_{1},\ldots,s_{r})\in \NN^{r}$ with $w:=\sum_{i=1}^{r}s_{i}$, put $\fQ:=(H_{s_{1}-1},\ldots,H_{s_{r}-1})$ and $Q:=H_{w-1}$, where $\left\{ H_{n}\right\}$ be the Anderson-Thakur polynomials given in \S\ref{sec:AT polynomials}. Let $M\in \cF$ (resp. $N\in \cF$) be defined by $\Phi$ given in \eqref{E:Phi s} (resp. by the matrix given in \eqref{E:PhiN}). Let $M'\in\cF$ be defined by $\Phi'$ given in \eqref{E:Phi s'} with a $\ok[t]$-basis $\left\{m_{1},\ldots,m_{r} \right\}$, and $(E',\rho)$ be the $t$-module associated to $M'$. Put $\bv_{\fs}=\Delta\left(H_{s_{r}-1}^{(-1)}(t-\theta)^{s_{r}}m_{r}\right)\in E'(A)$ and $\bu_{\fs}=\Delta\left( H_{w-1}^{(-1)}(t-\theta)^{w}m_{1} \right)\in E'(A)$. Then we have:
\begin{enumerate}
\item[(a)] If $w$ is not divisible by $q-1$, then we have that $\zeta_{A}(\fs)$ is zeta-like if and only if there exists $a,b\in \FF_{q}[t]$ (not both zero) so that $\rho_{a}(\bv_{\fs})+\rho_{b}(\bu_{\fs})=0$ in the $t$-module $E'(A)$.
\item[(b)] If $w$ is divisible by $q-1$, then there exists nonzero $a\in \FF_{q}[t]$ so that $\zeta_{A}(\fs)$ is zeta-like if and only if $\rho_{a}(\bv_{\fs})=0$ in the $t$-module $E'(A)$.
\end{enumerate}
\end{theorem}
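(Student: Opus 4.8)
The plan is to deduce the statement from the criteria already established—Theorem~\ref{T:Zeta-like} for the $\Ext^{1}$-picture and Theorem~\ref{T:MainThm2} for the $t$-module picture—by transporting everything through Anderson's identification $\Ext_{\cF}^{1}(\mathbf{1},M')\cong E'(\ok)$ of Theorems~\ref{T:Ext1} and~\ref{T:t-module}, and then by translating ``$\{\zeta_{A}(\fs),\zeta_{A}(w),\tilde{\pi}^{w}\}$ linearly dependent over $k$'' into a statement about zeta-likeness, splitting into the two cases according to whether $(q-1)\mid w$.

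First I would pin down the images of the classes of $M$ and $N$ under the composite $\FF_{q}[t]$-module isomorphism
\[
\Ext_{\cF}^{1}(\mathbf{1},M')\xrightarrow{\ \mu\ }M'/(\sigma-1)M'\xrightarrow{\ \sim\ }E'(\ok).
\]
By \eqref{E:Phi s} the extension datum of $M$ (the first $r$ entries of the last row of $\Phi$) is $(0,\dots,0,H_{s_{r}-1}^{(-1)}(t-\theta)^{s_{r}})$, so $\mu$ carries the class of $M$ to $H_{s_{r}-1}^{(-1)}(t-\theta)^{s_{r}}m_{r}\bmod(\sigma-1)M'$, which $\Delta$ then sends to $\bv_{\fs}$; similarly, by \eqref{E:PhiN} the extension datum of $N$ is $(H_{w-1}^{(-1)}(t-\theta)^{w},0,\dots,0)$, so the composite sends the class of $N$ to $\bu_{\fs}$. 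Both points lie in $E'(A)$: for $\bv_{\fs}$ this is Theorem~\ref{T:Defined over A}(b), and for $\bu_{\fs}$ one uses that $H_{w-1}^{(-1)}(t-\theta)^{w}m_{1}=\sigma(H_{w-1}m_{1})$ with $H_{w-1}\in A[t]$, so by Proposition~\ref{P:PropII} (the case $\ell=1$) together with the evident $\sigma$-stability of $\Xi$ we get $H_{w-1}^{(-1)}(t-\theta)^{w}m_{1}\in\Xi$, whence $\bu_{\fs}\in\Delta(\Xi)\subseteq E'(A)$ by Remark~\ref{Rem:DeltaXi}. Since $E'$ is defined over $A$ (Theorem~\ref{T:Defined over A}(a)), $E'(A)$ sits inside $E'(\ok)$ as $\FF_{q}[t]$-modules, so an identity among points of $E'(A)$ holds there iff it holds in $E'(\ok)$. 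As $a*M+_{B}b*N$ represents the class $a[M]+b[N]$, which the composite isomorphism sends to $\rho_{a}(\bv_{\fs})+\rho_{b}(\bu_{\fs})$, it follows that the classes of $M$ and $N$ are $\FF_{q}[t]$-linearly dependent in $\Ext_{\cF}^{1}(\mathbf{1},M')$ if and only if there exist $a,b\in\FF_{q}[t]$, not both zero, with $\rho_{a}(\bv_{\fs})+\rho_{b}(\bu_{\fs})=0$ in $E'(A)$; likewise the class of $M$ is $\FF_{q}[t]$-torsion iff $\bv_{\fs}$ is an $\FF_{q}[t]$-torsion point of $E'(A)$.

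For part (a), assume $(q-1)\nmid w$. Then $\tilde{\pi}^{w}\notin k_{\infty}$, whereas $\zeta_{A}(\fs),\zeta_{A}(w)\in k_{\infty}$ and $\zeta_{A}(w)\neq 0$ by Thakur~\cite{T09a}. Thus any relation $c_{1}\zeta_{A}(\fs)+c_{2}\zeta_{A}(w)+c_{3}\tilde{\pi}^{w}=0$ with $c_{i}\in k$ forces $c_{3}=0$ (else $\tilde{\pi}^{w}\in k_{\infty}$), and then $c_{1}\neq0$, so $\zeta_{A}(\fs)/\zeta_{A}(w)=-c_{2}/c_{1}\in k$; conversely, if $\zeta_{A}(\fs)$ is zeta-like then $\zeta_{A}(\fs)-(\zeta_{A}(\fs)/\zeta_{A}(w))\zeta_{A}(w)=0$ is a nontrivial $k$-linear relation. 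Therefore $\{\zeta_{A}(\fs),\zeta_{A}(w),\tilde{\pi}^{w}\}$ is linearly dependent over $k$ precisely when $\zeta_{A}(\fs)$ is zeta-like, and combining this with Theorem~\ref{T:Zeta-like} and the translation above yields~(a).

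For part (b), assume $(q-1)\mid w$. Carlitz's formula \eqref{E:CarlitzFormula} gives $\zeta_{A}(w)=(\BC(w)/\Gamma_{w+1})\tilde{\pi}^{w}$, and $\BC(w)/\Gamma_{w+1}\in k^{\times}$ since $\zeta_{A}(w)\neq0$; hence $\zeta_{A}(\fs)/\zeta_{A}(w)\in k$ iff $\zeta_{A}(\fs)/\tilde{\pi}^{w}\in k$, i.e.\ $\zeta_{A}(\fs)$ is zeta-like iff it is Eulerian. By Theorem~\ref{T:MainThm2}, $\zeta_{A}(\fs)$ is Eulerian iff $\bv_{\fs}$ is an $\FF_{q}[t]$-torsion point of $E'(A)$, i.e.\ iff $\rho_{a}(\bv_{\fs})=0$ for some nonzero $a\in\FF_{q}[t]$. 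If $\zeta_{A}(\fs)$ is zeta-like, any such $a$ witnesses the asserted equivalence; if $\zeta_{A}(\fs)$ is not zeta-like, then $\bv_{\fs}$ is non-torsion, so $\rho_{a}(\bv_{\fs})\neq0$ for every nonzero $a$ and the equivalence (both sides false) holds for any such $a$. Either way a nonzero $a$ as in the statement exists, proving~(b). The point to watch is that (b) genuinely requires this detour through the Eulerian criterion: when $(q-1)\mid w$ the class of $N$ (equivalently, the Anderson--Thakur point $Z_{w}$) is already $\FF_{q}[t]$-torsion, so the $k$-linear dependence appearing in Theorem~\ref{T:Zeta-like} holds automatically and conveys no information; apart from that, the only real work is the bookkeeping of tracking the classes of $M$ and $N$ through the isomorphisms of Theorems~\ref{T:Ext1} and~\ref{T:t-module}.
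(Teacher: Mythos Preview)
Your proof is correct and follows essentially the same approach as the paper: for (a) you reduce to Theorem~\ref{T:Zeta-like} via the identification $\Ext_{\cF}^{1}(\mathbf{1},M')\cong E'(\ok)$ after noting that $(q-1)\nmid w$ forces $\tilde{\pi}^{w}\notin k_{\infty}$ so that $k$-linear dependence of $\{\zeta_{A}(\fs),\zeta_{A}(w),\tilde{\pi}^{w}\}$ is equivalent to zeta-likeness, and for (b) you reduce to Theorem~\ref{T:MainThm2} after observing that Carlitz's formula makes zeta-like and Eulerian coincide when $(q-1)\mid w$. Your write-up is more thorough than the paper's in that you explicitly verify $\bu_{\fs}\in E'(A)$ via Proposition~\ref{P:PropII} and the $\sigma$-stability of $\Xi$, and you carefully parse the existential quantifier in~(b); these are welcome clarifications but do not change the underlying route.
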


\begin{proof}
 Note that if $w$ is not divisible by $q-1$, then $\tilde{\pi}^{w}\notin k_{\infty}$ and hence the $k$-linear dependence of $\left\{ \zeta_{A}(\fs),\zeta_{A}(w),\tilde{\pi}^{w} \right\}$ is equivalent to that $\zeta_{A}(\fs)/\zeta_{A}(w)\in k$ . Thus, the result (a) follows from Theorem~\ref{T:Zeta-like} and the identification $\Ext_{\cF}^{1}({\bf{1}},M')\cong E'(\ok)$. When $w$ is divisible by $q-1$, we note that the zeta-like MZV's are the same as Eulerian MZV's because of (\ref{E:CarlitzFormula}), and hence the result (b) follows from Theorem~\ref{T:MainThm2}.
\end{proof}
\begin{remark}
In the case when the weight of $\zeta_{A}(\fs)$ is not divisible by $q-1$, the two integral points $\bu_{\fs}$ and $\bv_{\fs}$ are not $\FF_{q}[t]$-torsion elements inside $E'(A)$. If $\zeta_{A}(\fs)$ is zeta-like, $\FF_{q}[t]$-linear relations between $\bu_{\fs}$ and $\bv_{\fs}$ in Theorem~\ref{T:CriterionZeta-like}~(a) can be actually found. See \cite{KL15}.

\end{remark}

\section{The algorithm and rule specifying Eulerian MZV's} \label{sec:algorithm}

Fix the base finite field $\FF_q$, as we are in positive characteristic $p$, multizeta values satisfy $\zeta_{A}(\fs)^p=\zeta_{A}(s_{1},\dots,s_{r})^p=\zeta_{A}(ps_{1},\dots, ps_{r})$. Thus to investigate whether a given MZV is Eulerian we may restrict ourselves  to consider  only primitive tuples $\fs $, in the sense that not all $s_i$ are divisible by $p$. As first example of Eulerian MZV of depth $>1$, we cite e.g. Thakur~\cite[Thm.~5, Thm.~4]{Thakur09b}
\[\zeta_{A}(q - 1, (q-1)^2)={1\over{[1]^{q-1}} }\zeta_{ A}(q^2 - q),\]
where the Carlitz notation: $[\ell] := \theta^{q^\ell} - \theta$, is adopted, and  the depth two Eulerian MZV
\[\zeta_{A}(q^{\ell} - 1,  q^\ell(q-1))=\zeta_{A}(q^\ell - 1) \zeta_{A}(q - 1)^{q^\ell} - \zeta_{ A}(q^{\ell + 1} - 1).\]
This last relation has been extended inductively to arbitrary depth by Chen~\cite{Ch14} , yielding Eulerian MZV of arbitrary depth $r$ with respect to any $\FF_q$. See \eqref{I:idepth}.

Having Theorem~\ref{T:MainThm2} in our possession, we now write down an efficient algorithm for deciding whether any given MZV is Eulerian.

\subsection{The algorithm}
In accordance with Corollary~\ref{C:SimuEulerian}, we only consider the case of all $s_{i}$ divisible by $q-1$ when working on Eulerian MZVs. The following theorem offers an algorithm for Eulerian MZV's.
\begin{theorem}\label{T:EffectiveCriterion}
For any $\fs=(s_{1},\ldots,s_{r})\in \NN^{r}$ with all $s_{i}$ divisible by $q-1$,  we let \[w_{i}=s_{r-i}+s_{r-i+1}+\cdots+s_{r}\] for $i=1,\ldots,r-1$. Let $(E',\rho)$ be the $t$-module and $\bv_{\fs}$ be the integral point in $E'(A)$ given in Theorem~\ref{T:CriterionZeta-like}.  We decompose
\[ w_{i}=p^{\ell_{i}}n_{i}(q^{h_{i}}-1) \] so that $p\nmid n_{i}$ and $h_{i}$ is the greatest integer for which $q^{h_{i}}-1\mid w_{i}$. Put
\[ a=(t^{q^{h_{r-1}}}-t)^{p^{\ell_{r-1}}}\cdots (t^{q^{h_{1}}}-t)^{p^{\ell_{1}}}\frac{\Gamma_{s_{r}+1}}{\Gamma_{s_{r}}}{\rm{den}}(BC(s_{r}))|_{\theta=t} .\] Then we have that $\zeta_{A}(\fs)$ is Eulerian if and only if $\rho_{a}(\bv_{\fs})=0$.
\end{theorem}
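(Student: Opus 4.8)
By Theorem~\ref{T:MainThm2}, $\zeta_A(\fs)$ is Eulerian precisely when the integral point $\bv_\fs$ is $\FF_q[t]$-torsion in $E'(A)$, so the plan is to prove that, under the standing hypothesis $(q-1)\mid s_i$ for every $i$, the point $\bv_\fs$ is torsion if and only if $\rho_a(\bv_\fs)=\mathbf{0}$. One direction is immediate: $a$ is a \emph{nonzero} element of $\FF_q[t]$ --- each factor $(t^{q^{h_i}}-t)^{p^{\ell_i}}$ is nonzero because $(q-1)\mid w_i$ forces $h_i\geq 1$, and $\tfrac{\Gamma_{s_r+1}}{\Gamma_{s_r}}\den(\BC(s_r))|_{\theta=t}$ is a nonzero polynomial --- so $\rho_a(\bv_\fs)=\mathbf{0}$ already forces $\bv_\fs$ to be torsion. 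The content is the converse, which I would establish by induction on the depth $r$. For $r=1$ we have $\fs=(s_1)$ with $(q-1)\mid s_1$, $E'=\bC^{\otimes s_1}$ with $\rho=[\cdot]_{s_1}$, and $\bv_{(s_1)}$ equals the Anderson--Thakur special point $Z_{s_1}$; as the product over $i$ is empty, $a=\tfrac{\Gamma_{s_1+1}}{\Gamma_{s_1}}\den(\BC(s_1))|_{\theta=t}$, which is exactly the polynomial of Remark~\ref{Rem:torsion Zn} that annihilates $Z_{s_1}$, so $\rho_a(\bv_\fs)=\mathbf{0}$.

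For the inductive step write $\fs'=(s_2,\ldots,s_r)$ and let $(E_{\fs'}',\rho')$ and $\bv_{\fs'}$ be the objects attached to $\fs'$ as in Theorem~\ref{T:CriterionZeta-like}. First I would record the short exact sequence of Frobenius modules $0\to C^{\otimes w}\to M_\fs'\to M_{\fs'}'\to 0$, where $w:=w_{r-1}=s_1+\cdots+s_r$, obtained by taking for $C^{\otimes w}$ the $\sigma$-stable rank-one submodule $\ok[t]\,m_1\subseteq M_\fs'$ (on which $\sigma$ acts by $(t-\theta)^w$, from the first row of $\Phi'$) and observing that $\sigma$ acts on the quotient basis $\overline{m}_2,\ldots,\overline{m}_r$ through the matrix $\Phi'$ associated to $\fs'$, since $\fQ=(H_{s_1-1},\ldots,H_{s_r-1})$ truncates to the Anderson--Thakur tuple of $\fs'$. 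Applying the snake lemma to the endomorphism $\sigma-1$ of this sequence, and using $\Hom_{\cF}(\mathbf{1},C^{\otimes w})=\Hom_{\cF}(\mathbf{1},M_{\fs'}')=0$, yields the short exact sequence of $\FF_q[t]$-modules
\[
0\to \Ext_{\cF}^{1}(\mathbf{1},C^{\otimes w})\to \Ext_{\cF}^{1}(\mathbf{1},M_\fs')\xrightarrow{\pi}\Ext_{\cF}^{1}(\mathbf{1},M_{\fs'}')\to 0
\]
stated in the remark following Corollary~\ref{C:zeta-like}. Through the isomorphisms of Theorems~\ref{T:Ext1} and~\ref{T:t-module} this becomes $0\to\bC^{\otimes w}(\ok)\to E'(\ok)\xrightarrow{\pi}E_{\fs'}'(\ok)\to 0$, in which $\pi$, being induced by the quotient map $M_\fs'\to M_{\fs'}'$, is the coordinate projection discarding the $\bC^{\otimes w}$-block; hence $\pi$ is defined over $A$, restricts to a surjection $E'(A)\twoheadrightarrow E_{\fs'}'(A)$ with kernel $\bC^{\otimes w}(A)$, and tracing $\bv_\fs=\Delta\bigl(H_{s_r-1}^{(-1)}(t-\theta)^{s_r}m_r\bigr)$ through the identifications gives $\pi(\bv_\fs)=\bv_{\fs'}$.

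Now suppose $\zeta_A(\fs)$ is Eulerian. By Corollary~\ref{C:SimuEulerian} so is $\zeta_A(\fs')$, and since the weights $w_1,\ldots,w_{r-2}$ the theorem attaches to $\fs'$ coincide with those it attaches to $\fs$, the polynomial it attaches to $\fs'$ is $a'=(t^{q^{h_{r-2}}}-t)^{p^{\ell_{r-2}}}\cdots(t^{q^{h_1}}-t)^{p^{\ell_1}}\,\tfrac{\Gamma_{s_r+1}}{\Gamma_{s_r}}\den(\BC(s_r))|_{\theta=t}$, with $a=(t^{q^{h_{r-1}}}-t)^{p^{\ell_{r-1}}}a'$. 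By the induction hypothesis $\rho'_{a'}(\bv_{\fs'})=\mathbf{0}$, so the $\FF_q[t]$-equivariance of $\pi$ and the equality $\pi(\bv_\fs)=\bv_{\fs'}$ give $\pi(\rho_{a'}(\bv_\fs))=\mathbf{0}$, whence $\rho_{a'}(\bv_\fs)\in\ker\pi\cap E'(A)=\bC^{\otimes w}(A)$. Because $\bv_\fs$ is $\FF_q[t]$-torsion by Theorem~\ref{T:MainThm2}, $\rho_{a'}(\bv_\fs)$ is a rational torsion point of $\bC^{\otimes w}$; and since $w=w_{r-1}=p^{\ell_{r-1}}n_{r-1}(q^{h_{r-1}}-1)$, Lemma~\ref{L:RationalTorsions} shows that $(t^{q^{h_{r-1}}}-t)^{p^{\ell_{r-1}}}$ annihilates $\bC^{\otimes w}(k)_{\mathrm{tor}}$, hence kills $\rho_{a'}(\bv_\fs)$. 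Therefore $\rho_a(\bv_\fs)=\rho_{(t^{q^{h_{r-1}}}-t)^{p^{\ell_{r-1}}}}\bigl(\rho_{a'}(\bv_\fs)\bigr)=\mathbf{0}$, closing the induction.

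The step I expect to cost the most effort is the opening of the inductive passage: checking that $M_\fs'/\ok[t]\,m_1$, with its inherited $\sigma$-action, is literally the Frobenius module $M_{\fs'}'$ of the truncated tuple (with the Anderson--Thakur data matching up correctly), that the induced morphism of $t$-modules is defined over $A$ with kernel exactly $\bC^{\otimes w}$, and that $\pi$ carries $\bv_\fs$ to $\bv_{\fs'}$ under Anderson's dictionary in Theorems~\ref{T:Ext1}--\ref{T:t-module}. With that bookkeeping in hand, the rest is a direct assembly of Theorem~\ref{T:MainThm2}, Corollary~\ref{C:SimuEulerian}, Lemma~\ref{L:RationalTorsions}, and Remark~\ref{Rem:torsion Zn}.
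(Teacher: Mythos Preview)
Your proposal is correct and follows essentially the same route as the paper's own proof: induction on depth, base case via the identification $\bv_{(s)}=Z_s$ and Remark~\ref{Rem:torsion Zn}, inductive step via the short exact sequence $0\to C^{\otimes w}\to M_\fs'\to M_{\fs'}'\to 0$ and the snake lemma, then Lemma~\ref{L:RationalTorsions} to kill the residual point in $\bC^{\otimes w}(k)_{\mathrm{tor}}$. Your write-up is in places a bit more explicit than the paper's (you cite Corollary~\ref{C:SimuEulerian} to justify that $\zeta_A(\fs')$ is Eulerian before invoking the induction hypothesis, and you spell out why $a\neq 0$), but the argument is the same.
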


\begin{proof}
Note that $(\Leftarrow)$ follows from Theorem~\ref{T:MainThm2}.
We prove the result $(\Rightarrow)$ by induction on the depth $r$. When $r=1$, we write $\fs=s\in \NN$. We claim that $\bv_{s}$ is essentially the same as the special point $Z_{s}$ in Definition~\ref{Def:special points}, and so the result is valid by Remark~\ref{Rem:torsion Zn}.
In this case, we note that $M'=C^{\otimes s}$ and $\bv_{s}=\Delta(H_{s-1}^{(-1)}(t-\theta)^{s})$. We further note that \[H_{s-1}^{(-1)}(t-\theta)^{s}\equiv H_{s-1}\hbox{ mod }(\sigma-1), \] which implies
\[\bv_{s}:=\Delta\left( H_{s-1}^{(-1)}(t-\theta)^{s}\right)=\Delta\left(H_{s-1}\right).\] Let $H_{s-1}=\sum_{i\geq 0}h_{si}\theta^{i}$ with $h_{si}\in \FF_{q}[t]$. Since the map $\Delta$ induces an $\FF_{q}[t]$-module isomorphism between $C^{\otimes s}/(\sigma-1)C^{\otimes s}$ and $\bC^{\otimes s}(\ok)$, and $\Delta$ maps $\theta$ to the vector $(0,\cdots,0,\theta^{i})^{\tr}\in \bC^{\otimes s}(A)$, we see that
\[\bv_{s}=\Delta\left( H_{s-1}\right)= \Delta\left(\sum_{i\geq 0}h_{si}\theta^{i} \right)= \sum_{i\geq 0}[h_{si}]_{n}\begin{pmatrix} 0\\ \vdots\\ 0 \\ \theta^{i}\end{pmatrix} =Z_{s} .\]
So the result is valid by Remark~\ref{Rem:torsion Zn}.

Suppose that the result is valid for depth less than $r$. Let $\Phi''$ be the square matrix of size $r-1$ cut from the right lower square of $\Phi'$ in (\ref{E:Phi s'}), and let $M''$ be the Frobenius module defined by $\Phi''$. Therefore we have the exact sequence of Frobenius modules
\[ 0\rightarrow C^{\otimes (s_{1}+\cdots+s_{r})}\rightarrow M'\twoheadrightarrow M''\rightarrow 0    .\]

For each $s\in\NN$, it is not hard to see that the $\FF_{q}[t]$-linear map $(\sigma-1):C^{\otimes s}\rightarrow C^{\otimes s}$ is injective, and hence arguments of induction on $r$ show that the $\FF_{q}[t]$-linear map $(\sigma-1):M''\rightarrow M''$ is also injective. It follows  that the snake lemma implies the exact sequence of $\FF_{q}[t]$-modules
\[  0\rightarrow C^{\otimes (s_{1}+\cdots+s_{r})}/(\sigma-1)C^{\otimes (s_{1}+\cdots+s_{r})} \rightarrow M'/(\sigma-1)M'\twoheadrightarrow M''/(\sigma-1)M''\rightarrow 0  \]
Denote by $(E'',\phi)$ the $t$-module underlying $M''/(\sigma-1)M''$, and so we have the exact sequence of $\FF_{q}[t]$-modules
\[   0\rightarrow \bC^{\otimes (s_{1}+\cdots+s_{r})}(\ok)\rightarrow E'(\ok)\twoheadrightarrow E''(\ok)\rightarrow 0  .\]
Denote by $\pi$ the projection map $E'(\ok)\twoheadrightarrow E''(\ok)$ given by
\[ (a_{1},\ldots,a_{d})^{\tr}\mapsto (a_{w+1},\ldots,a_{d})^{\tr} ,\]
where $d:=(s_{1}+\cdots+s_{r})+(s_{2}+\cdots+s_{r})+\cdots+s_{r}$ and $w:=\sum_{i=1}^{r}s_{i}$.

Put $\fs'=(s_{2},\ldots,s_{r})$. We claim that $\bv_{\fs'}=\pi(\bv_{\fs})$. Assume this claim first. We write $a=(t^{q^{h_{r-1}}}-t)^{p^{\ell_{r-1}}}b$, where
\[b:=(t^{q^{h_{r-2}}}-t)^{p^{\ell_{r-2}}}\cdots (t^{q^{h_{1}}}-t)^{p^{\ell_{1}}}\frac{\Gamma_{s_{r}+1}}{\Gamma_{s_{r}}}{\rm{den}}(BC(s_{r}))|_{\theta=t},\] then by the induction hypothesis we see that ${\bf{0}}=\phi_{b}(\bv_{\fs'})=\pi(\rho_{b}(\bv_{\fs}))$ and hence \[\rho_{b}(\bv_{\fs})\in\Ker \pi=\bC^{\otimes (s_1+\cdots+s_{r})}(\ok).\] Since by Theorem~\ref{T:Defined over A} $E'$ and $E''$ are defined over $A$ and $\bv_{\fs}, \bv_{\fs'}$ are integral points, $\rho_{b}(\bv_{\fs})\in \bC^{\otimes (s_{1}+\cdots+s_{r})}(k)_{{\rm{tor}}}$. Thus the result follows by Lemma~\ref{L:RationalTorsions}.

Finally, we note that the claim above follows from the following commutative diagram
\[
\xymatrix{
 M' \ar@{->>}[r]^{\Delta} \ar@{->>}[d] & E'(\ok) \ar@{->>}[d]^{\pi}\\
 M'' \ar@{->>}[r]^{\Delta} & E''(\ok)  , }
\]where $M'\twoheadrightarrow M''$ is the projection map given by $\sum_{i=1}^{r}f_{i}m_{i}\mapsto \sum_{i=2}^{r}f_{i}m_{i}$ with $f_{i}\in \ok[t]$.
\end{proof}

For any $\fs=(s_{1},\ldots,s_{r})\in \NN^{r}$, let $\bu=(u_{1},\ldots,u_{r})\in (k^{\times})^{r}\cap \mathbb{D}_{\fs}$ satisfy the hypotheses of Theorem~\ref{T:MainThmCMPL}. Applying the same arguments above we obtain the following result.

 \begin{corollary}
For any $\fs=(s_{1},\ldots,s_{r})\in \NN^{r}$ with all $s_{i}$ divisible by $q-1$, let $\bu=(u_{1},\ldots,u_{r})\in (k^{\times})^{r}\cap \mathbb{D}_{\fs}$ satisfy the hypotheses of Theorem~\ref{T:MainThmCMPL}. Put $w_{0}:=s_{r}$ and write \[ w_{0}=p^{\ell_{0}}n_{0}(q^{h_{0}}-1) \] so that $p\nmid n_{0}$ and $h_{0}$ is the greatest integer for which $q^{h_{0}}-1\mid w_{0}$. Let $(h_{1},\ell_{1}),\ldots,(h_{r-1},\ell_{r-1})$ be defined in Theorem~\ref{T:EffectiveCriterion}. Put $\fQ:=\bu$ and let $M'$ be the Frobenius module defined by the matrix \eqref{E:Phi s'} with a $\ok[t]$-basis $\left\{m_{1},\ldots,m_{r} \right\}$. Let $(E',\rho)$ be the $t$-module underlying associated to $M'$ and $\bv_{\fs}:=\Delta\left(u_{r}^{(-1)}(t-\theta)^{s_{r}}m_{r} \right) $. Define $a:=\prod_{i=0}^{r-1}(t^{q^{h_{i}}}-t)^{p^{\ell_{i}}}\in \FF_{q}[t]$. Then we have that the value $\Li_{\fs}(\bu)$ is Eulerian if and only if $\rho_{a}(\bv_{\fs})=0$.
 \end{corollary}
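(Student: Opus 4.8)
The plan is to imitate the proof of Theorem~\ref{T:EffectiveCriterion} almost line for line, with the Anderson--Thakur tuple $(H_{s_1-1},\dots,H_{s_r-1})$ replaced throughout by the algebraic tuple $\bu=(u_1,\dots,u_r)$ and every appeal to a multizeta criterion replaced by the corresponding clause of Theorem~\ref{T:MainThmCMPL}. The implication $(\Leftarrow)$ is immediate: if $\rho_a(\bv_\fs)=\mathbf{0}$ with $a\neq 0$, then $\bv_\fs$ is an $\FF_q[t]$-torsion point of $E'(\ok)$, so under the identification $\Ext_{\cF}^{1}(\mathbf{1},M')\cong E'(\ok)$ of Theorems~\ref{T:Ext1} and~\ref{T:t-module} the class of $M$ is torsion, and Theorem~\ref{T:MainThmCMPL}(a) gives that $\Li_\fs(\bu)$ is Eulerian.

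For $(\Rightarrow)$ assume $\Li_\fs(\bu)$ is Eulerian; then by Theorem~\ref{T:MainThmCMPL}(a) and the same identification (for $r=1$ this step is the classical depth-one statement, cf.\ \cite{AT90, Yu91}) the point $\bv_\fs$ is $\FF_q[t]$-torsion in $E'(\ok)$. I would then induct on the depth $r$. As a preliminary, carrying out the computation exactly as in the proofs of Proposition~\ref{P:PropII} and Theorem~\ref{T:Defined over A}(b) --- repeatedly using $(t-\theta)^{s_\ell+\cdots+s_r}m_\ell=\sigma m_\ell-u_{\ell-1}^{(-1)}(t-\theta)^{s_{\ell-1}+\cdots+s_r}m_{\ell-1}$ together with $u^{(-1)}\sigma=\sigma u$ --- one obtains
\[
 u_r^{(-1)}(t-\theta)^{s_r}m_r=\sum_{j=1}^{r}(-1)^{r-j}\,\sigma\!\left(u_r u_{r-1}\cdots u_j\right)m_j ,
\]
so that $\bv_\fs=\Delta\!\left(u_r^{(-1)}(t-\theta)^{s_r}m_r\right)$ has coordinate $(-1)^{r-j}u_r u_{r-1}\cdots u_j\in k$ on $m_j$ and $0$ on the remaining basis vectors; in particular $\bv_\fs\in E'(k)$. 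For $r=1$ this reads $\bv_{s_1}=(0,\dots,0,u_1)^{\tr}\in\bC^{\otimes s_1}(k)$ with $M'=C^{\otimes s_1}$; being torsion, $\bv_{s_1}$ lies in $\bC^{\otimes s_1}(k)_{\mathrm{tor}}$, and since $q-1\mid s_1$ and $s_1=p^{\ell_0}n_0(q^{h_0}-1)$, Lemma~\ref{L:RationalTorsions} shows its annihilator is generated by $(t^{q^{h_0}}-t)^{p^{\ell_0}}=a$, which settles the base case.

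For the inductive step put $\fs'=(s_2,\dots,s_r)$, $\bu'=(u_2,\dots,u_r)$, $w=s_1+\cdots+s_r$, and write $a=(t^{q^{h_{r-1}}}-t)^{p^{\ell_{r-1}}}b$; comparing the index data $(w_i,h_i,\ell_i)$ for $\fs$ and $\fs'$ shows that $b=\prod_{i=0}^{r-2}(t^{q^{h_i}}-t)^{p^{\ell_i}}$ is precisely the polynomial $a$ attached to $\fs'$. Exactly as in the proof of Theorem~\ref{T:EffectiveCriterion}, cutting $\Phi'$ to the lower-right block $\Phi''$ (the matrix of $\fs',\bu'$), using $0\to C^{\otimes w}\to M'\to M''\to 0$, the injectivity of $\sigma-1$ on each $C^{\otimes s}$ and hence on $M''$, and the snake lemma, one gets an exact sequence $0\to\bC^{\otimes w}(\ok)\to E'(\ok)\xrightarrow{\pi}E''(\ok)\to 0$, together with the commutative square relating $M'\twoheadrightarrow M''$ to $\pi$, whence $\pi(\bv_\fs)=\bv_{\fs'}$. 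Since the hypotheses of Theorem~\ref{T:MainThmCMPL} for $\fs$ restrict to those for $\fs'$ and $\Li_{\fs'}(\bu')$ is Eulerian by Theorem~\ref{T:MainThmCMPL}(b), the induction hypothesis gives $\pi(\rho_b(\bv_\fs))=\phi_b(\bv_{\fs'})=\mathbf{0}$, i.e.\ $\rho_b(\bv_\fs)\in\ker\pi=\bC^{\otimes w}(\ok)$. Because $\bv_\fs\in E'(k)$ and (by the $k$-rational analogue of Theorem~\ref{T:Defined over A}) $E'$ is defined over $k$, we get $\rho_b(\bv_\fs)\in\bC^{\otimes w}(k)$, which is torsion since $\bv_\fs$ is; hence $\rho_b(\bv_\fs)\in\bC^{\otimes w}(k)_{\mathrm{tor}}$. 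Finally $w=w_{r-1}=p^{\ell_{r-1}}n_{r-1}(q^{h_{r-1}}-1)$ is divisible by $q-1$, so Lemma~\ref{L:RationalTorsions} yields $(t^{q^{h_{r-1}}}-t)^{p^{\ell_{r-1}}}\rho_b(\bv_\fs)=\mathbf{0}$, i.e.\ $\rho_a(\bv_\fs)=\mathbf{0}$, completing the induction.

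The one point that requires genuine (if routine) work beyond transcribing the proof of Theorem~\ref{T:EffectiveCriterion} is the $k$-rational analogue of Theorem~\ref{T:Defined over A}: that when $\fQ=\bu\in(k^\times)^r$ the $t$-module $E'$ is defined over $k$ and $\bv_\fs$ is $k$-rational. This is where the hypothesis $\bu\in(k^\times)^r$ (as opposed to $(\ok^\times)^r$) is essential, and it follows from the $k$-coefficient versions of Propositions~\ref{P:PropI} and~\ref{P:PropII}: the off-diagonal entries of $\Phi'$ are the $u_j^{(-1)}$, and $u_j^{(-1)}\sigma=\sigma u_j$ with $u_j\in k$, so the Frobenius twists are absorbed into $\sigma$ exactly as the twists of the Anderson--Thakur polynomials were in the original argument. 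The only other things needing care are the bookkeeping of the indices $(w_i,h_i,\ell_i)$ between $\fs$ and $\fs'$ and the invocation of the depth-one case; note that no Bernoulli--Carlitz denominator appears here, precisely because in the base case one now uses Lemma~\ref{L:RationalTorsions} directly rather than Remark~\ref{Rem:torsion Zn}.
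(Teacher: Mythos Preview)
Your proposal is correct and follows essentially the same approach as the paper's own proof, which is given only as a four-bullet outline: establish the $k$-rational analogue of Theorem~\ref{T:Defined over A}, identify the base case $r=1$ with $\bC^{\otimes s_r}$ and invoke Lemma~\ref{L:RationalTorsions} directly (in place of Remark~\ref{Rem:torsion Zn}), and then run the induction of Theorem~\ref{T:EffectiveCriterion} verbatim. You have supplied considerably more detail than the paper does---the explicit formula $u_r^{(-1)}(t-\theta)^{s_r}m_r=\sum_{j=1}^{r}(-1)^{r-j}\sigma(u_r\cdots u_j)m_j$ giving $\bv_\fs\in E'(k)$, the index bookkeeping showing $b$ is the polynomial attached to $\fs'$, and the observation that the Bernoulli--Carlitz denominator disappears---but the architecture is identical.
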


\begin{proof}
The proof is outlined as
\begin{enumerate}
\item[$\bullet$] The $t$-module $(E',\rho)$ is defined over $k$ and $\bv_{\fs}$ is rational point in $E'(k)$ using the fact  $\bu\in(k^{\times})^{r}\cap \mathbb{D}_{\fs}$ and following the arguments in Theorem~\ref{T:Defined over A}.
\item[$\bullet$] Note that for $r=1$, we have $E'=\bC^{\otimes s_{r}}$ from \S\ref{sec:Ext^1}.
\item[$\bullet$] In the case $r=1$, we have that $\bv_{\fs}$ is an $\FF_{q}[t]$-torsion point in $E'(k)$ if and only if $\bv_{\fs}$ is $(t^{q^{h_{0}}}-t)^{p^{\ell_{0}}}$-torsion by Lemma~\ref{L:RationalTorsions}.
\item[$\bullet$] The result follows by following the induction arguments in the proof of Theorem~\ref{T:EffectiveCriterion}.
\end{enumerate}
\end{proof}

\subsubsection{The algorithm}
 Here we provide the algorithm from Theorem~\ref{T:EffectiveCriterion}. Given any $\fs=(s_{1},\ldots,s_{r})\in \NN^{r}$ with each $s_{i}$ divisible by $q-1$, we list the essential steps as follows.
\begin{enumerate}
\item[(I)] Compute the Anderson-Thakur polynomials $H_{s_{1}-1},\ldots,H_{s_{r}-1}$.

\item[(II)] Put $\fQ=(H_{s_{1}-1},\ldots,H_{s_{r}-1})$ and let $M'$ be the Frobenius module defined by $\Phi'$ as in  (\ref{E:Phi s'}) with $\ok[t]$-basis $m_{1},\ldots,m_{r}$. Put $d=(s_{1}+\cdots+s_{r})+\cdots+s_{r}$ and let $\left\{\nu_{1},\ldots,\nu_{d} \right\}$ be the $\ok[\sigma]$-basis of $M'$ given by \[(t-\theta)^{s_{1}+\cdots+s_{r}-1}m_{1},\dots,(t-\theta)m_{1},m_{1},\ldots,(t-\theta)^{s_{r}-1}m_{r},\ldots,(t-\theta)m_{r},m_{r} .\] Identify $M'/(\sigma-1)M'$ with $\Mat_{d\times 1}(\ok)$ via $\nu_{1},\ldots,\nu_{d}$.

\item[(III)] Write down the $t$-action on $M'/(\sigma-1)M'$, and so giving a $t$-module structure on $\Mat_{d\times 1}(\ok)$, which we denote by $(E',\rho)$.

\item[(IV)] Consider $H_{s_{r}-1}^{(-1)}(t-\theta)^{s_{r}}m_{r}\in M'/(\sigma-1)M'$, which corresponds to an integral point $\bv_{\fs}=(a_{1},\ldots,a_{d})^{\tr}\in E'(A)$ from the decomposition $H_{s_{r}-1}^{(-1)}(t-\theta)^{s_{r}}m_{r}\equiv\sum_{i=1}^{d} a_{i} \nu_{i}$ (mod $\sigma-1$).

\item[(V)] Define the polynomial $a$ as in Theorem~\ref{T:EffectiveCriterion}, and then compute $\rho_{a}(\bv_{\fs})$. If it is zero, then $\zeta_{A}(s_{1},\ldots,s_{r})$ is Eulerian; otherwise, $\zeta_{A}(s_{1},\ldots,s_{r})$ is non-Eulerian.

\end{enumerate}

\subsubsection{Examples of $(E',\rho)$ and $\bv_{\fs}$}
We provide some examples of the explicit forms of $(E',\bv_{\fs})$. The following are two examples associated to Eulerian MZV's, i.e., $\bv_{\fs}$ is $\FF_{q}[t]$-torsion in $E'(A)$.

\begin{enumerate}
\item[(1)] Let $q=3,$ $\fs=(2,4)$. Then $(E',\rho)$ associated to $\zeta(2,4)$ is given by
\[
\rho_t=\left(\begin{smallmatrix}
\theta&1&0&0&0&0&0&0&0&0\\
0&\theta&1&0&0&0&0&0&0&0\\
0&0&\theta&1&0&0&0&0&0&0\\
0&0&0&\theta&1&0&0&0&0&0\\
0&0&0&0&\theta&1&0&0&0&0\\
\tau&0&0&0&0&\theta&2\tau&0&0&0\\
0&0&0&0&0&0&\theta&1&0&0\\
0&0&0&0&0&0&0&\theta&1&0\\
0&0&0&0&0&0&0&0&\theta&1\\
0&0&0&0&0&0&\tau&0&0&\theta\\
\end{smallmatrix}\right)
\]and
\[ \bv_{\fs}=(0,0,1,0,1,(\theta+2\theta^{3}),2,0,2,(2\theta+\theta^{3}))^{\tr}
  .\]

\item[(2)] Let $q=2$, $\fs=(1,2,4)$. Then $(E',\rho)$ associated to $\zeta(1,2,4)$ is given by
\[
\rho_t=\left(\begin{smallmatrix}
\theta&1&0&0&0&0&0&0&0&0&0&0&0&0&0&0&0\\
0&\theta&1&0&0&0&0&0&0&0&0&0&0&0&0&0&0\\
0&0&\theta&1&0&0&0&0&0&0&0&0&0&0&0&0&0\\
0&0&0&\theta&1&0&0&0&0&0&0&0&0&0&0&0&0\\
0&0&0&0&\theta&1&0&0&0&0&0&0&0&0&0&0&0\\
0&0&0&0&0&\theta&1&0&0&0&0&0&0&0&0&0&0\\
\tau&0&0&0&0&0&\theta&\tau&0&0&0&0&0&\tau&0&0&0\\
0&0&0&0&0&0&0&\theta&1&0&0&0&0&0&0&0&0\\
0&0&0&0&0&0&0&0&\theta&1&0&0&0&0&0&0&0\\
0&0&0&0&0&0&0&0&0&\theta&1&0&0&0&0&0&0\\
0&0&0&0&0&0&0&0&0&0&\theta&1&0&0&0&0&0\\
0&0&0&0&0&0&0&0&0&0&0&\theta&1&0&0&0&0\\
0&0&0&0&0&0&0&\tau&0&0&0&0&\theta&\tau&0&0&0\\
0&0&0&0&0&0&0&0&0&0&0&0&0&\theta&1&0&0\\
0&0&0&0&0&0&0&0&0&0&0&0&0&0&\theta&1&0\\
0&0&0&0&0&0&0&0&0&0&0&0&0&0&0&\theta&1\\
0&0&0&0&0&0&0&0&0&0&0&0&0&\tau&0&0&\theta\\
\end{smallmatrix}\right)
\] and  \[
{\bf{v}}_{\fs}=(0,0,0,0,1,1,(\theta+\theta^{2}),0,0,0,1,1,(\theta+\theta^{2}),0,1,1,(\theta+\theta^{2}))^{\tr}.
\]
\end{enumerate}

The following are two examples associated to non-Eulerian MZV's,  $\bv_{\fs}$ is not $\FF_{q}[t]$-torsion in $E'(A)$.
\begin{enumerate}
\item[(3)] Let $q=3$, $\fs=(4,2)$. Then $(E',\rho)$ associated to $\zeta(4,2)$ is given by
\[
\rho_t=\left(\begin{smallmatrix}
\theta&1&0&0&0&0&0&0\\
0&\theta&1&0&0&0&0&0\\
0&0&\theta&1&0&0&\tau&0\\
0&0&0&\theta&1&0&0&0\\
0&0&0&0&\theta&1&\tau&0\\
\tau&0&0&0&0&\theta&(\theta+2\theta^{3})\tau&0\\
0&0&0&0&0&0&\theta&1\\
0&0&0&0&0&0&\tau&\theta\\
\end{smallmatrix}\right)
\]and
\[ \bv_{\fs}=(0,0,1,0,1,(\theta+2\theta^{3\
}),0,1)^{\tr}
 .\]

\item[(4)] Let $q=3$, $\fs=(2,2,2)$. Then $(E',\rho)$ associated to $\zeta(2,2,2)$ is given by
\[ \rho_t=\left(\begin{smallmatrix}
\theta&1&0&0&0&0&0&0&0&0&0&0\\
0&\theta&1&0&0&0&0&0&0&0&0&0\\
0&0&\theta&1&0&0&0&0&0&0&0&0\\
0&0&0&\theta&1&0&0&0&0&0&0&0\\
0&0&0&0&\theta&1&0&0&0&0&0&0\\
\tau&0&0&0&0&\theta&2\tau&0&0&0&\tau&0\\
0&0&0&0&0&0&\theta&1&0&0&0&0\\
0&0&0&0&0&0&0&\theta&1&0&0&0\\
0&0&0&0&0&0&0&0&\theta&1&0&0\\
0&0&0&0&0&0&\tau&0&0&\theta&2\tau&0\\
0&0&0&0&0&0&0&0&0&0&\theta&1\\
0&0&0&0&0&0&0&0&0&0&\tau&\theta\\
\end{smallmatrix}\right) \]
and \[\bv_{\fs}=(0,0,0,0,0,1,0,0,0,2,0,1)^{\tr}. \]

\end{enumerate}

\subsection{Searching for the rules governing Eulerian MZV's}\label{sub:search}

Lara Rodr\'{i}guez and Thakur~\cite{LRT13} have given conjectures on which $r$-tuples $(s_1,\ldots,s_r)$ may occur for Eulerian MZV's. Furthermore, they also provided conjectural formulas for these special values.  Computations based on implementing the above algorithm in Magma (by Yi-Hsuan Lin) have led us to the following description of Eulerian MZV's in arbitrary depth.

Fix prime power $q$, and call the  sequence of $r$-tuples below Eulerian $r$-tuples with respect to $\FF_q$:
\[\Eu_1 := (q-1)\,\,\, \text{and}\,\,\, \Eu_{r+1} := (q - 1, q \Eu_r)  \in \NN^{r+1}.\]
For each depth $r$, we introduce a sequence of $r$-tuples in $\NN^r$ as follows:
\[\Eu_r(\ell) := (q^\ell - 1, q^\ell \Eu_{r-1}), \,\,\text{for }\,\,r> 1,\,\,\ell\ge 1, \]
and $\Eu_1(\ell) := (q^\ell -1)$.
Call this the canonical sequence of
depth $r$ with respect to $\FF_q$. The corresponding MZV's $\zeta_{A}(\Eu_r(\ell))$ are all Eulerian. This follows from the Euler-Carlitz formula\eqref{E:CarlitzFormula} and the following inductive formula of Chen~\cite{Ch14} for all $r\ge 2$ and $\ell\ge 1$:
\begin{equation}\label{I:idepth} \zeta_{A}(\Eu_r(\ell)) = \zeta_{A}(q^\ell -1) \zeta_{A}(\Eu_{r-1})^{q^\ell} - \zeta_{A}(\Eu_{r-1}(\ell + 1)).\end{equation}

Note that when $q=2$, all depth one Carlitz zeta values $\zeta_{A}(n)$ are Eulerian, and $\zeta_{A}(\fs)$ is Eulerian if and only if it is zeta-like. The following Eulerian multizeta values of arbitrary depth $r>1$ and weight $2^{r-1}$ have been found by
Lara  Rodr\'{i}guez and Thakur~\cite{LRT13}:
\begin{equation}\label{e1:excep}\zeta_{A}(1, \fs) := \zeta_{A}(1, 1, 2, \ldots, 2^{r-1}) = {1\over{[1]^{2^{r-1}} [2] ^{2^{r-2}}\cdots [r]}}\zeta_{A} (2^{r-1}). \end{equation}

For $q\ge 3$, we predict that the primitive Eulerian MZV's of depth $r\ge 2$ are precisely:
\begin{enumerate}
\item  The canonical family with every depth  $r\ge 2$, $\ell\ge 1$,
\[\zeta_{A}(\Eu_r(\ell)), \,\,\text{of weight}\,\,\, q^{r+\ell-1}-1,\]
\item An extra family in depth $r= 2$, $\ell\ge 1$,
\[\zeta_{A}(q^\ell (q - 1), q^{\ell + 2} - 1 - q^\ell (q - 1)),\,\,\text{of weight}\,\,\, q^{\ell+2}-1. \]
\item An exceptional primitive Eulerian MZV  in depth $r= 2, $
\[\zeta_{ A}(q - 1, (q-1)^2),\,\,\text{of weight}\,\,\, q^2-q. \]
\end{enumerate}
Thus for $q\ge 3$ there should exist depth $r >1$ primitive Eulerian MZV's only in weights $q^2-q$ (depth 2), and $q^\ell - 1$ (in any depth) for $\ell\ge r.$ For depth $r=2$, only in weights $q^2-q$, $q^2 -1$, and each weight has only one primitive Eulerian MZV. For weight $q^\ell -1$, $\ell\ge 3$, each weight has two primitive Eulerian MZV's, coming from the two families in (1) and (2). Lara Rodr\'{i}guez and Thakur~\cite{LRT13} have also given precise formulas (valid for any $q$) for the family (2):
\[\zeta_{A}(q^\ell (q - 1), q^{\ell + 2} - 1 - q^\ell (q - 1))=  {1\over{[1]^{q^\ell (q-1)}}}\zeta_{A}(q^{\ell + 2} - 1). \]

In the case $q=2$, we predict that the primitive Eulerian MZV's are given by:
\begin{enumerate}
\item The canonical family with every depth  $r\ge 2$, $\ell\ge 1$,
\[\zeta_{A}(\Eu_r(\ell)), \,\,\text{of weight}\,\,\, 2^{r+\ell-1}-1.\]
\item The extra family in depth $r= 2$, $\ell\ge 1$,
\[\zeta_{A}(2^\ell , 2^{\ell + 2} - 1 - 2^\ell)\,\,\text{weight}\,\,\, 2^{\ell+2}-1.\]
\item Three exceptional primitive Eulerian MZV  in depth $r= 2$:
\[\zeta_{A}(1, 1), \,\,\, \zeta_{A}(1, 3)=\zeta_{A}(1, 2^r - 1) = \biggl(\frac{1}{[1] [2]}+ \frac{1}{[1]} \biggr) \zeta_{A}(4), \,\,\text{and}\]
\[\zeta_{A}(3, 5) =  \frac{[2]^2 +1}{[1]^4[2]}\zeta_{A}(8).\]
Thus primitive Eulerian pairs exist
only in weights, $2$, $3$, $4$, $7$, $8$, $2^\ell - 1$, $\ell\ge 4$. For each weight $2^\ell - 1$, $\ell\ge 3$, there are
exactly two primitive Eulerian pairs from the two families. For weights $2$, $3$, $4$, $8$, each weight has only
one primitive Eulerian pair.
\item There are  exceptional primitive Eulerian MZV's  for depth $r>2$ :
$\zeta_{A}(1, \fs)$, where $\fs$ is a primitive Eulerian tuple of depth $r-1$ and weight either $2^r $ or $2^{r-1}$. Thus for depth $r> 3$, in each weight $2^r$, $2^{r- 1}$ there is only one exceptional
primitive Eulerian MZV. In the case of depth $3$, there are two exceptional primitive Eulerian of weight $8$, and one exceptional primitive Eulerian of weight $4$.

The exceptional sequence of primitive Eulerian MZV's of weight $2^{r-1}$, $r>1$, is the one given in \eqref{e1:excep}. That the above exceptional sequence of primitive MZV's of weight $2^r$ consists only of  Eulerians (first conjectured by  Lara Rodr\'{i}guez and Thakur~\cite{LRT13}) is a consequence of  the following formula of Chen~\cite{Ch14} :
\[\zeta_{A}(1, \fs) = \zeta_{A}(1, 3, 2^2, \ldots, 2^{r-1}) = \zeta_{A}(1) \zeta_{A}(1, 2, \ldots, 2^{r-1}) +  \zeta_{A}(1, 1, 2, \ldots, 2^{r-2}) ^2.\]
When $r=2$, this last equality also goes back to Thakur~\cite[Thm.~ 8]{Thakur09b}
\end{enumerate}

All MZV's in the above list have been confirmed to be Eulerian by ~\cite{Ch14} and ~\cite{LRT13}.
Our computations  suggest that the above list exhausts all primitive Eulerian multizeta values for $\FF_q[\theta]$. In other words, any $r$-tuple $\fs$ of depth $r>1$, not accounted by our list above should give non-Eulerian $\zeta_{A}(\fs)$. Previously in \cite{LRT13},  Lara Rodr\'{i}guez and Thakur had also collected data  basing on continued fraction computations  to decipher  the occurrence of Eulerian MZV's, and made precise conjectures characterizing Eulerian tuples. Their conjectures agree with the above list. Our ``$t$-motivic'' algorithm for determining Eulerian multizeta values is rooted by an entirely different principle, runs a bit more efficiently and is completely algebraic. It allows us to do computations inductively for higher depth because of the key Corollary~\ref{C:SimuEulerian}, thereby leading to the above description which we believe is a complete list.

\begin{center}
\textbf{Summary of data certified by our computations}
\end{center}
All tuples $\fs$ of depth $r$ and weights $w$ within the following respective bounds have been checked for the Eulerian property.  The answers agree with the description above and the MZV's in the complementary part of the list above are non-Eulerian. When $3\leq q\leq 11$, all tuples having their weights within the bounds below have been checked, with no restriction on their depths except $q=2$.
$$q=2, \,\,\, \text{depth}=2,\,\,\,  \text{weight} \le 256$$
$$q=2, \,\,\, \text{depth}=3,4,5,\,\,\,  \text{weight} \le 128   $$
$$q=2, \,\,\, \text{depth}=6,\,\,\,  \text{weight} \le 64   $$
$$q=3, \,\,\,  \,\,\, \text{weight} \le 243$$
$$q=4, \,\,\,   \,\,\, \text{weight} \le 256$$
$$q=5, 7 \,\,\,  \,\,\,  \text{weight} \le q^3$$
$$8\le q\le 23, \,\,\,  \,\,\,  \text{weight} \le q^2$$

\bibliographystyle{alpha}

\end{document}